\newtheorem{theorem}{Theorem}[section]
\newtheorem*{theorem-nn}{Theorem}
\newtheorem{corollary}[theorem]{Corollary}
\newtheorem{lemma}[theorem]{Lemma}
\newtheorem{proposition}[theorem]{Proposition}
\newtheorem*{question-nn}{Question}
\newtheorem*{conjecture-nn}{Conjecture}
\theoremstyle{definition}
\newtheorem{definition}[theorem]{Definition}
\newtheorem*{definition-nn}{Definition}
\theoremstyle{remark}
\newtheorem*{example-nn}{Example}
\newcommand{\acts}{\curvearrowright}
\newcommand{\flow}[1][F]{\mathfrak{#1}}
\newcommand{\la}{\leftarrow}
\newcommand{\es}{\varnothing}
\DeclareMathOperator{\proj}{proj}
\DeclareMathOperator{\eff}{Eff}
\DeclareMathOperator{\sgr}{Sgr}
\DeclareMathOperator{\ssp}{Ssp}
\DeclareMathOperator{\stab}{stab}
\DeclareMathOperator{\per}{per}
\DeclareMathOperator{\spanop}{span}
\def\@cber[#1]{\protect\@ifnextchar[{\@cber@i[#1]}{\@ifnextchar*{\@cberRel[#1]}{\@cber@i[#1][]}}}
\def\@cber@i[#1][#2]{\@ifnextchar[{\@cber@ii[#1][#2]}{\@cber@ii[#1][#2][]}}
\def\@cber@ii[#1][#2][#3]{\mathsf{#1}\ifx&#2&\else_{#2}\fi\ifx&#3&\else^{#3}\fi}
\def\@cberRel[#1]*{\@ifnextchar[{\@cberRel@i[#1]*}{\@cberRel@i[#1]*[]}}
\def\@cberRel@i[#1]*[#2]{\@ifnextchar[{\@cberRel@ii[#1]*[#2]}{\@cberRel@ii[#1]*[#2][]}}
\def\@cberRel@ii[#1]*[#2][#3]{\protect\kern1pt\mathsf{#1}\ifx&#2&\else_{#2}\fi\ifx&#3&\else^{#3}\fi\kern1pt}
\def\cber{\@cber[E]}
\def\fber{\@cber[F]}
\begin{document}
\title{On time change equivalence of Borel flows}

\author{Konstantin Slutsky}
\address{Department of Mathematics, Statistics, and Computer Science\\
University of Illinois at Chicago\\
322 Science and Engineering Offices (M/C 249)\\
851 S Morgan Street\\
Chicago, IL 60607--7045\\
United States}
\email{kslutsky@gmail.com}
\begin{abstract}
  This paper addresses the notion of time change equivalence for Borel \( \mathbb{R}^{d} \)-flows.
  We show that all free \( \mathbb{R}^{d} \)-flows are time change equivalent up to a compressible
  set.  An appropriate version of this result for non-free flows is also given.
\end{abstract}

\maketitle

\section{Introduction}
\label{sec:introduction}

A Borel flow \( \flow \) is a Borel measurable action of \( \mathbb{R}^{d} \) on a standard Borel
space.  The action will be denoted additively: \( x + \vec{r} \) is the action of
\( \vec{r} \in \mathbb{R}^{d} \) upon the point \( x \in X \).  With any flow \( \flow \) on \( X \)
we associate an equivalence relation \( \cber[X][\flow] \) given by \( x \cber*[X][\flow] y \)
whenever there is \( \vec{r} \in \mathbb{R}^{d} \) such that \( x + \vec{r} = y \). An equivalence
class of \( x \in X \) will be denoted by \( [ x ]_{\cber[X][\flow]} \). An \emph{orbit equivalence}
between two flows \( \mathbb{R}^{d} \acts X \) and \( \mathbb{R}^{d} \acts Y \) is a Borel bijection
\( \phi : X \to Y \) such that for all \( x, y \in X\)
\[ x \cber*[X] y \iff \phi(x) \cber*[Y] \phi(y).  \]

The notion of orbit equivalence is particularly suited for actions of discrete groups, but it tends
to trivialize for certain locally compact groups.  For instance, it is known that all
non smooth free \( \mathbb{R}^{d} \)-flows are orbit equivalent.  To remedy this collapse, one
often considers various strengthenings of orbit equivalence, usually by imposing ``local''
restrictions on the orbit equivalence maps.

Given any orbit of a \emph{free} action of \( \mathbb{R}^{d} \), there is
a bijective correspondence between points of the orbit and elements of \( \mathbb{R}^{d} \).  More
precisely, with an equivalence relation \( \cber[X] \) arising from a free flow
\( \mathbb{R}^{d} \acts X \) one may associate a \emph{cocycle} map
\( \rho : \cber[X] \to \mathbb{R}^{d} \) which is defined by the condition
\[ x + \rho(x,y) = y \quad \textrm{for all } x \cber*[X] y. \]
The map \( \rho(x, \, \cdot \,) \) establishes a bijection between the orbit of \( x \) and
\( \mathbb{R}^{d} \).  While concrete identification depends on the choice of \( x \), any
translation invariant structure on \( \mathbb{R}^{d} \) can be transferred onto orbits of such an
action unambiguously.  Time change equivalence between free flows is defined as an orbit equivalence
that preserves the topology on every orbit.

\begin{definition}
  Let \( \mathbb{R}^{d} \acts X \) and \( \mathbb{R}^{d} \acts Y \) be free Borel flows.  An orbit
  equivalence \( \phi : X \to Y \) is said to be a \emph{time change equivalence} if for any
  \( x \in X \) the map \( \xi(x, \,\cdot\,) : \mathbb{R}^{d} \to \mathbb{R}^{d} \) specified by
  \[ \phi(x + \vec{r}) = \phi(x) + \xi(x, \vec{r}) \]
  is a homeomorphism\footnote{There seems to be little difference whether \( \xi(x, \, \cdot \,) \)
    is also required to preserve the smooth structure.  As a matter of fact, it is usually easier to
    construct a time change equivalence which is moreover a \( C^{\infty} \)-diffeomorphism on
    every orbit.}\kern-4pt.
\end{definition}

The concept of time change equivalence has been studied quite extensively in ergodic theory, where
the set up differs from the one of Borel dynamics in the following aspects.  In ergodic theory phase
spaces are assumed to be endowed with probability measures, which flows are required to preserve (or
to ``quasi-preserve'', i.e., to preserve the null sets).  Moreover, all conditions of interest may
hold only up to a set of measure zero as opposed to holding everywhere.  In these regards ergodic
theory is less restrictive than Borel dynamics.  On the other side, all orbit equivalence maps are
additionally required to preserve measures between phases spaces, which significantly restricts the
pool of possible orbit equivalences.  In this aspect ergodic theory provides finer notions to
differentiate flows.  To summarize, frameworks of Borel dynamics and ergodic theory are in general
positions, and while methods used in these areas are intricately related, there are oftentimes no
direct implications between results.

In the measurable case, there is a substantial difference between one dimensional and higher
dimensional \( d \ge 2 \) flows.  There are continuumly many time change inequivalent \(
\mathbb{R} \)-flows (see \cite{MR653094}).  In higher dimension the situation is simpler.  Two
relevant results here are due to D.~Rudolph \cite{MR536948} and J.~Feldman \cite{MR1113569}.

\begin{theorem}[D.~Rudolph]
  \label{thm:rudolph-thm}
  Any two measure preserving ergodic\footnote{Recall that a measure is ergodic, if any invariant set
  is either null or has full measure.} \( \mathbb{R}^{d} \)-flows, \( d \ge 2 \), are time
change equivalent.
\end{theorem}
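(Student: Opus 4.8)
The plan is to fix one ergodic measure-preserving \emph{model} flow $\mathcal{M}$ and prove that every ergodic measure-preserving $\mathbb{R}^{d}$-flow is time change equivalent to $\mathcal{M}$ by a measure-preserving map; since time change equivalence is symmetric and transitive, this yields the theorem. For $\mathcal{M}$ I would take a \emph{rectangular suspension}: fix reals $\beta_{i}>0$ and a Bernoulli $\mathbb{Z}^{d}$-action $(Z,\nu,S)$, and let $\mathcal{M}$ act on $\{(z,\vec t): z\in Z,\ 0\le t_{i}<\beta_{i}\}$ by translating $\vec t$ and applying the generator $S^{e_{i}}$ whenever the $i$-th coordinate reaches $\beta_{i}$. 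A short argument from ergodicity of $S$ shows $\mathcal{M}$ is free and ergodic.

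\emph{Step 1: reduction to a rectangular suspension.} Given an arbitrary ergodic free measure-preserving flow $\mathbb{R}^{d}\acts X$, choose a Borel cross-section that is uniformly discrete and syndetic in every orbit (e.g.\ a maximal $\varepsilon$-separated set), equipped with the associated cross-section measure, and pass from it to a \emph{rectangular tiling} of every orbit. Here one invokes a multidimensional Ambrose--Kakutani representation together with Rudolph's two-valued coding, so that the $i$-th side length of each tile lies in a two-element set $\{\alpha_{i},\beta_{i}\}$ with $\alpha_{i}/\beta_{i}$ irrational. This presents the flow as built over an ergodic $\mathbb{Z}^{d}$-action $(Y,\mu,T)$ with a roof $f\colon Y\to\prod_{i}\{\alpha_{i},\beta_{i}\}$ recording the tile shapes, and the representation is itself a time change equivalence, since the reparametrization of each orbit it induces is piecewise affine with positive partial slopes, hence a self-homeomorphism of $\mathbb{R}^{d}$. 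The theorem thus reduces to: any two rectangular suspensions, over any ergodic $\mathbb{Z}^{d}$-bases and with any irrational side-value pairs, are time change equivalent.

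\emph{Step 2: re-tiling.} This is the heart of the matter, and where $d\ge 2$ enters. One cannot merely transport an orbit equivalence of the base $\mathbb{Z}^{d}$-actions up to the suspensions: a direct computation shows the induced reparametrization of an orbit is continuous across a tile interface only when the base cocycle increment along the corresponding generator is exactly that generator, i.e.\ only for a genuine conjugacy. Instead the time change must be built by \emph{re-tiling}: an orbit of a rectangular suspension is, off a null set, a copy of $\mathbb{R}^{d}$ carrying a two-valued box tiling, and the freedom is to replace that tiling by another admissible one before matching. I would do this in two interlocking moves. First, using Ornstein--Weiss $\varepsilon$-quasi-tilings of $\mathbb{Z}^{d}$ by finitely many box-shaped F{\o}lner sets, align the combinatorics of the two bases along congruent finite blocks up to a leftover region of small density, which is then absorbed into a null set by iterating over a refining sequence of scales $\varepsilon_{n}\downarrow 0$ (the standard exhaustion from the proof that amenable actions are hyperfinite). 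Second --- the genuinely new ingredient --- within each pair of aligned blocks one interpolates so that the two box tilings coincide as tilings of $\mathbb{R}^{d}$, which also matches the pairs $\{\alpha_{i},\beta_{i}\}$; irrationality of $\alpha_{i}/\beta_{i}$ makes the family of admissible two-valued tilings of a prescribed region rich enough (attainable corner positions are asymptotically dense) for the required local adjustments to exist, while the residual mismatches can be pushed onto the $(d-1)$-dimensional interfaces between blocks and spread out continuously there. Carrying this out coherently over all orbits and keeping the map measure-preserving yields a time change equivalence with $\mathcal{M}$.

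\emph{Main obstacle.} Everything is in Step 2: upgrading the Ornstein--Weiss orbit equivalence of the base $\mathbb{Z}^{d}$-actions --- whose cocycle is wild, with increments that would break continuity of the time change at every grid crossing --- to a re-tiling whose induced reparametrization is continuous on a.e.\ orbit, while simultaneously normalizing the side-value pairs. Dimension $d\ge 2$ is used in two essential ways: the interfaces across which adjustments are made have positive codimension and hence are negligible, leaving room to interpolate; and the combinatorial flexibility of two-valued box tilings in several variables is what permits realizing arbitrary base combinatorics and sliding tilings into agreement. Both collapse when $d=1$ --- interfaces are single points admitting no interpolation, and one-dimensional two-valued suspensions retain genuine invariants --- which is precisely the origin of the continuum of time change classes of \cite{MR653094}. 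The technical labor is to control the quasi-tiling blocks, the leftover sets, and the interface adjustments uniformly across all the refining scales.
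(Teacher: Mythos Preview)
The paper does not prove this theorem. Theorem~\ref{thm:rudolph-thm} is stated in the introduction as a known result of D.~Rudolph, with a citation to \cite{MR536948}, and serves only as background motivating the paper's own work in the Borel (rather than measure-theoretic) setting. There is therefore no proof in the paper to compare your proposal against.

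That said, your sketch is a recognizable outline of Rudolph's original argument: the reduction to a rectangular suspension via a multidimensional Ambrose--Kakutani representation with two-valued side lengths, followed by a re-tiling argument exploiting the flexibility of box tilings in dimension $d\ge 2$. Your identification of the main obstacle --- that a naive lift of a base orbit equivalence fails to be continuous across tile boundaries, and that the cure is to absorb discrepancies into codimension-one interfaces --- is accurate, as is your explanation of why the argument collapses when $d=1$. If you want feedback on the mathematics itself rather than a comparison with this paper, the place to look is Rudolph's paper \cite{MR536948} directly; the present paper neither reproduces nor adapts that proof.
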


\begin{theorem}[J.~Feldman]
  \label{thm:feldman-thm}
  Any two quasi measure preserving ergodic \( \mathbb{R}^{d} \)-flows, \( d \ge 2 \), are time
  change equivalent.
\end{theorem}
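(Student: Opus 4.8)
The plan is to reduce each flow to a common geometric model and then build the equivalence by a tile-by-tile interpolation. First I would invoke an \( \mathbb{R}^{d} \)-analogue of the Ambrose--Kakutani theorem, valid in the quasi-invariant category: up to time change, every free ergodic quasi measure preserving \( \mathbb{R}^{d} \)-flow is isomorphic to a \emph{tiling flow}. Concretely, on the orbit of almost every point \( x \), identified with \( \mathbb{R}^{d} \) through the cocycle \( \rho(x,\,\cdot\,) \), one produces a measurably varying tessellation by polytopes taken from a fixed finite list of shapes, and the combinatorics of how the tiles are stacked is recorded by a free ergodic quasi measure preserving \( \mathbb{Z}^{d} \)-action on the Borel space of tile centres. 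The utility of this normal form is that a time change of the flow corresponds precisely to deforming the tiles while keeping the adjacency pattern of the tiling fixed, so the ``continuous'' and ``discrete'' halves of the problem decouple.

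With both flows in this form, I would match the discrete halves: any two free ergodic quasi measure preserving \( \mathbb{Z}^{d} \)-actions generate hyperfinite equivalence relations, so, after discarding null sets, the two tile-centre spaces can be identified by a Borel bijection carrying one tile-stacking groupoid onto the other. The crux is then to promote this identification to a time change equivalence of the flows. For this, for each \( x \) I would construct a self-homeomorphism of \( \mathbb{R}^{d} \) carrying the tiling on the orbit of \( x \) onto the tiling on the orbit of its image tile by tile, choosing the individual tile-to-tile maps so that they agree along shared faces and hence glue to a genuine homeomorphism, and arranging all choices to depend measurably on \( x \). Dimension \( d \ge 2 \) enters exactly here: the extra transverse directions provide the room both to realise an arbitrary combinatorial isomorphism of two tilings by an honest homeomorphism and to absorb the discrepancy between the Radon--Nikodym cocycles of the two flows by rescaling tile volumes. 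Controlling those rescalings is what forces the use of the Hopf (Chacon--Ornstein) ratio ergodic theorem, which provides the asymptotic densities that make the volume adjustments consistent over an entire orbit without accumulating unbounded distortion.

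The step I expect to be the main obstacle is this last interpolation: producing, measurably in \( x \), a family of tile-by-tile maps that simultaneously (i) glue to a homeomorphism of \( \mathbb{R}^{d} \) on each orbit, (ii) vary Borel-measurably in the orbit parameter, and (iii) soak up the Radon--Nikodym derivative, the latter being the feature that is absent from Rudolph's measure preserving argument and that genuinely requires the quasi-invariant ratio ergodic theorem. The remaining ingredients --- the representation theorem, the orbit equivalence of the \( \mathbb{Z}^{d} \)-bases, and the verification that the glued map is an orbit equivalence of the flows conjugating one action to the other up to homeomorphic reparametrisation on orbits --- are then either standard or routine bookkeeping once the geometric interpolation lemma is available.
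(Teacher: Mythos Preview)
The paper does not prove this theorem. Theorem~\ref{thm:feldman-thm} is stated in the introduction as a known result of Feldman \cite{MR1113569}, cited for context alongside Rudolph's Theorem~\ref{thm:rudolph-thm}; no proof is given or attempted anywhere in the paper. There is therefore nothing in the paper to compare your proposal against.

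That said, your sketch is broadly in the spirit of Feldman's actual argument: reduce to a Rokhlin-tower or tiling representation, match the discrete data via orbit equivalence of the base actions, and then interpolate tile by tile using an extension lemma of the kind the present paper records as Lemma~\ref{lem:extension-lemma} (which it attributes directly to \cite[Proposition~2.6]{MR1113569}). The paper's own main construction, Theorem~\ref{thm:tce-up-to-cocompressible}, explicitly says it ``relies on a back-and-forth argument similar to the one used in the proof of Theorem~1 in \cite{MR1113569}'', so the mechanism you describe is recognisably the one the author has in mind. One caution: your outline leans on an orbit equivalence between the two \(\mathbb{Z}^{d}\)-bases as an input, but what the back-and-forth actually needs is finer than a bare orbit equivalence of the cross sections --- one must repeatedly pass between towers in the two flows while controlling where images land, which is why Feldman (and the present paper) organise the argument as an alternating extension rather than first fixing a single bijection of bases and then gluing. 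If you intend to flesh this out, that is the place where the bookkeeping is genuinely delicate.
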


A striking difference of Borel framework was discovered in \cite{MR2578608} by B.~Miller and
C.~Rosendal, where they proved that all non smooth Borel \( \mathbb{R} \)-flows are time change
equivalent.  In other words, the flexibility of considering orbit equivalences which do not preserve
any given measure turns out to be more important than the necessity to define equivalences on every
orbit (as opposed to almost everywhere).  We recall that an equivalence relation \( \cber \) on a
standard Borel space \( X \) is said to be smooth if there is a Borel map \( f : X \to
\mathbb{N}^{\mathbb{N}} \) such that 
\[ x \cber* y \iff f(x) = f(y). \]
For equivalence relations arising as orbit equivalence relations of Polish group actions this is
equivalent\footnote{We refer the reader to \cite{MR1321597} for all the relevant results from
  descriptive set theory.} to existence of a Borel transversal --- a Borel set that intersects each
equivalence class in a single point.

Miller and Rosendal posed a question whether any two free
Borel \( \mathbb{R}^{d} \)-flows are time change equivalent.  This paper makes a contribution in
this direction.





\subsection{Main results}
\label{sec:main-results}

Many constructions in Borel dynamics and ergodic theory have to deal with two kinds of issues ---
``local'' and ``global''.  Global aspects refer to properties that hold relative to many (usually
all) orbits.  Local aspects of a construction, on the other hand, reflect behavior that is local to
any given orbit.  For instance, the property of \( \phi : X \to Y \) being an orbit equivalence
between flows \( \mathbb{R}^{d} \acts X \) and \( \mathbb{R}^{d} \acts Y \) is global, as it
requires different orbits to be mapped to different orbits.  To be more specific, if a partial
construction of \( \phi \) yields \( X \ni x \mapsto \phi(x) \in Y \) for some \( x \in X \), then
no \( x' \in X \) such that \( \neg \bigl( x' \cber*[X] x \bigr) \) can be mapped into
\( \bigl[ \phi(x) \bigr]_{\cber[Y]} \).  In this sense, before defining \( \phi(x') \), any \( x' \)
needs to know something about points \( x \in X \) from other orbits.  On the other hand, the
property for an orbit equivalence \( \phi : X \to Y \) to be a time change equivalence is
purely local as it can be checked by looking at each orbit individually.

It is oftentimes beneficial (if nothing else for pedagogical reasons) to decouple whenever possible
local and global aspects of a problem at hand.  For various constructions of orbit equivalence this
is achieved via the notion of a cross section.

\begin{definition}
  Let \( \mathbb{R}^{d} \acts X \) be a Borel flow.  A \emph{cross section} for the flow is a Borel set
  \( \mathcal{C} \subseteq X \) that intersects every orbit in a non-empty
  lacunary\footnote{Sometimes the definition is weakened by required that the intersection with
    every orbit is countable, but since lacunary sections always exist, there is no
    harm in adopting the stronger notion.} set, i.e., \( \mathcal{C} \cap [x]_{\cber[X]} \ne \es \)
  for all \( x \in X \), and there is a non-empty neighborhood of the identity
  \( U \subseteq \mathbb{R}^{d} \) such that \( (x + U) \cap \mathcal{C}  = \{x\} \) for all \( x \in \mathcal{C} \).
  When one wants to specify \( U \) explicitly, \( \mathcal{C} \) is called \( U \)-lacunary.
\end{definition}

One can think of a cross section as being a discrete version of the ambient equivalence relation.  A
theorem of A.~S.~Kechris \cite{MR1176624} shows that all Borel flows admit cross sections.  The
notion of a cross section can be further strengthened by requiring cocompactness.

\begin{definition}
  A cross section \( \mathcal{C} \) for a flow \( \mathbb{R}^{d} \acts X \) is \emph{cocompact} if there
  exists a compact set \( K \subseteq \mathbb{R}^{d} \) such that \( \mathcal{C} + K = X \).
\end{definition}

C.~Conley proved existence of cocompact cross sections for all Borel flows (see, for instance,
\cite[Section 2]{2015arXiv150400958S}).  Our approach to separate local and global aspects of
time change equivalence starts with cocompact cross sections \( \mathcal{C}_{i} \subseteq X_{i} \)
for two flows \( \mathbb{R}^{d} \acts X_{i} \), \( i = 1,2 \), and a \emph{given} orbit equivalence
\( \phi : \mathcal{C}_{1} \to \mathcal{C}_{2} \).  The question is then whether \( \phi \) can be
extended to a \emph{time change equivalence} \( \phi : X_{1} \to X_{2} \).  Since \( \phi \) is
given on points from \( \mathcal{C}_{1} \), its global behavior is uniquely defined, and one can
concentrate on the local aspect of the problem.  This approach is already implicit in the
aforementioned work of Feldman \cite{MR1113569}.  Before stating our results, we need one more
definition.

\begin{definition}
  A Borel flow \( \mathbb{R}^{d} \acts X \) is said to be \emph{compressible} if it admits no
  invariant Borel probability measures.  An invariant Borel subset \( Z \subseteq X \) is
  \emph{compressible} if the restriction of the flow onto \( Z \) is compressible.  An invariant set
  \( Z \subseteq X \) is \emph{cocompressible} if its complement is compressible.
\end{definition}

The definition of compressibility given above is concise, but not very useful.  The term
``compressible'' is explained by an important characterization due to M.~G.~Nadkarni
\cite{MR1081705} (see also \cite[Theorem 4.3.1]{MR1425877}) of the direct analog of this notion for countable
equivalence relations.

Our first result in this paper is the following theorem.

\begin{theorem}[see Theorem \ref{thm:extending-cross-section-to-time-change}]
  \label{thm:main-theorem}
  Let \( \mathbb{R}^{d} \acts X_{1} \) and \( \mathbb{R}^{d} \acts X_{2} \), \( d \ge 2 \), be free non smooth Borel flows
  and let \( \mathcal{D}_{i} \subseteq X_{i} \) be cocompact cross
  sections.  For any orbit equivalence \( \phi : \mathcal{D}_{1} \to \mathcal{D}_{2} \) there are
  cocompressible invariant Borel subsets \( Z_{i} \subseteq X_{i} \) and a time
  change equivalence \( \psi : Z_{1} \to Z_{2} \) which extends \( \phi \) on \( \mathcal{D}_{1}
  \cap Z_{1}\).
\end{theorem}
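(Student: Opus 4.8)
The plan is to reduce the problem to a "local" extension question over the cross sections by first straightening out the combinatorial/geometric structure that a cocompact cross section imposes on each orbit, and then transferring a time change orbit-by-orbit in a Borel way. First I would pass from the given cross sections $\mathcal{D}_i$ to auxiliary cross sections with better geometry. Using cocompactness together with the Voronoi-type tessellation associated to $\mathcal{D}_i$ (each $x \in X_i$ is assigned to the nearest point of $\mathcal{D}_i$ in its orbit, with ties broken in a Borel fashion), one gets on each orbit a locally finite tiling of $\mathbb{R}^d$ by uniformly bounded polytopes indexed by $\mathcal{D}_i \cap [x]_{\cber[X_i]}$. The combinatorial data of which tiles are adjacent, recorded as a graph on $\mathcal{D}_i$, is a Borel graph of bounded degree; the orbit equivalence $\phi : \mathcal{D}_1 \to \mathcal{D}_2$ need not respect this adjacency, so the first real task is to \emph{correct} $\phi$ — or rather, to build the time change $\psi$ directly from $\phi$ without needing $\phi$ to be an isomorphism of the adjacency graphs.

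The key step is the local construction. Fix an orbit; via the cocycle $\rho$ it is identified with $\mathbb{R}^d$ on the $X_1$ side and with $\mathbb{R}^d$ on the $X_2$ side, and $\phi$ restricted to this orbit gives a bijection between the (lacunary, relatively dense) point sets $D_1 := \mathcal{D}_1 \cap [x]$ and $D_2 := \mathcal{D}_2 \cap [\phi(x)]$. I want a homeomorphism $\xi(x,\cdot) : \mathbb{R}^d \to \mathbb{R}^d$ that carries $D_1$ to $D_2$ compatibly with $\phi$, depends in a Borel way on $x$, and has the cocycle coherence $\xi(x + \vec{r}, \vec{s}) = \xi(x,\vec{r}+\vec{s}) - \xi(x,\vec{r})$ forced on it. Here $d \ge 2$ is essential: because $\mathbb{R}^d$ minus a discrete set is still connected (and simply connected for $d \ge 3$, but path-connectedness suffices), one can isotope a bounded region carrying finitely many marked source points onto a bounded region carrying the same number of marked target points by a homeomorphism equal to the identity outside a slightly larger ball — this is the standard "moving points around in $\mathbb{R}^d$, $d \ge 2$" lemma, and it fails for $d = 1$. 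Iterating this over an exhaustion of the orbit by finite pieces (say balls of radius $n$ around a chosen base point), and gluing the successive homeomorphisms so that each agrees with the previous one on the already-fixed part, produces $\xi(x,\cdot)$ as a direct limit. The Borel dependence on $x$ is arranged by making every choice (tie-breaking, enumeration of points, the explicit isotopy) from a fixed Borel parametrization, using the Lusin–Novikov uniformization at each stage.

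The global step, and the source of the word "cocompressible" in the statement, is that the above works cleanly only on the part of the space where the combinatorics are "tame" — in particular one needs the orbit equivalence $\phi$ on cross sections, together with the exhaustion, to be implementable Borel-uniformly, and there may be an invariant null-ish obstruction where periodicity of the tessellation, or pathological growth of the correction, prevents the gluing; such a set carries no invariant probability measure precisely by a compressibility argument (one can build a Borel injection of the bad set into a proper subset of itself by shifting along a Borel $\mathbb{Z}$- or $\mathbb{R}$-subaction produced from the cross section, then invoke Nadkarni's theorem). Discarding it leaves cocompressible invariant sets $Z_i$, and by construction $\psi$ restricted to $\mathcal{D}_1 \cap Z_1$ agrees with $\phi$. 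I expect the main obstacle to be the \emph{simultaneous} control of two competing demands in the local construction: keeping $\xi(x,\cdot)$ a genuine homeomorphism of all of $\mathbb{R}^d$ (so the direct limit does not degenerate — e.g. the $\xi(x,\cdot)^{-1}$-images of large balls must still exhaust $\mathbb{R}^d$) while respecting the rigid cocycle identity above, which ties together the constructions on a whole orbit and forbids making independent choices at different base points. The one-dimensional case being genuinely false shows this obstacle is real and not merely technical; getting past it is exactly where the hypothesis $d \ge 2$ and the combinatorics of cocompact cross sections are spent.
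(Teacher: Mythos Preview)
Your outline has the right shape---local extension via moving finitely many points in $\mathbb{R}^{d}$, $d \ge 2$, then discard a compressible obstruction---but two of the load-bearing ideas are missing, and without them the argument does not close.

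First, your exhaustion is a \emph{forth} construction only. You fix a homeomorphism on $B_{n}$ in the source, extend to $B_{n+1}$, and take a direct limit. This yields an injective continuous map $\xi(x,\cdot) : \mathbb{R}^{d} \to \mathbb{R}^{d}$, but there is no reason for it to be surjective: the $\phi$-images of $D_{1} \cap B_{n}$ can land arbitrarily far out in the target copy of $\mathbb{R}^{d}$, and nothing in your scheme forces the image of $\xi(x,\cdot)$ to exhaust $\mathbb{R}^{d}$. You flag exactly this at the end as ``the main obstacle'' but do not say how to get past it. The paper's answer is a genuine \emph{back-and-forth}: at odd stages one works with $\zeta$ and boxes $R_{1,\omega_{1,s}}$ in $X_{1}$, at even stages with $\zeta^{-1}$ and boxes $R_{2,\omega_{2,s}}$ in $X_{2}$, and this alternation is what guarantees both injectivity and surjectivity of the limit map. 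A forth-only scheme cannot be repaired by throwing away a compressible set.

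Second, your description of the compressible bad set is not an argument. You say it ``carries no invariant probability measure precisely by a compressibility argument'' and gesture at Nadkarni, but you have not identified \emph{which} Borel invariant set is bad, nor why every invariant probability measure gives it measure zero. In the paper this is the entire content of the trees of partitions $(\Omega_{i,s})$ and the integers $(\omega_{i,s})$: one uses ergodic decomposition together with a Rokhlin tower structure to produce, for each finite string $s$, sets $\mathcal{V}_{i,s}$ on which the back-and-forth step succeeds, and one arranges $\mu_{x}\bigl(X_{i} \setminus (\mathcal{V}_{i,s} + R_{i,\omega_{i,s}})\bigr) \le 2^{-|s|}$ for every ergodic $\mu_{x}$. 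Borel--Cantelli then shows the limit sets $Z_{i}$ are conull for every such measure, hence cocompressible. None of this machinery---ergodic decomposition, Rokhlin towers, the quantitative $2^{-|s|}$ bound---appears in your proposal, and it is not clear what would replace it.

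A smaller point: your ``fixed Borel parametrization'' for the local isotopies is not enough to make the construction Borel, because the space of local configurations (finite marked point sets in a box, together with their $\phi$-images) is a priori uncountable. The paper handles this by first perturbing all cross sections onto a \emph{rational grid}, so that every local configuration is determined by finitely many vectors in $\mathbb{Q}^{d}$; there are then only countably many cases and one may choose the extension identically on each.
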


A corollary of the theorem above and of the classification of hyperfinite equivalence relations
\cite{MR1149121} is time change equivalence of Borel \( \mathbb{R}^{d} \)-flows up to a compressible
piece.

\begin{theorem}[see Theorem \ref{thm:time-change-equivalence-by-compressible}]
  \label{thm:time-change-equivalence-on-cocompressible}
  Let \( \mathbb{R}^{d} \acts X_{i} \), \( i=1,2 \), \( d \ge 2 \), be free non smooth Borel flows.
  There are cocompressible invariant Borel sets \( Z_{i} \subseteq X \) such that the restrictions
  of flows onto these sets are time change equivalent.
\end{theorem}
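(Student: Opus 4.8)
The plan is to deduce Theorem~\ref{thm:time-change-equivalence-on-cocompressible} from Theorem~\ref{thm:main-theorem} by producing, on a cocompressible piece of each flow, a cocompact cross section together with an orbit equivalence between the two cross sections to which Theorem~\ref{thm:main-theorem} can then be applied. The key observation is that a cocompact cross section \( \mathcal{C} \subseteq X \) of a free Borel flow \( \mathbb{R}^{d} \acts X \) carries a natural Borel \( \mathbb{Z}^{d} \)-ish structure: using the cocycle \( \rho \), one can canonically organize each orbit's worth of cross-section points into a locally finite configuration in \( \mathbb{R}^{d} \), and the induced orbit equivalence relation \( \cber[\mathcal{C}] \) on \( \mathcal{C} \) is a countable Borel equivalence relation. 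Moreover, since the ambient flow is non smooth, \( \cber[\mathcal{C}] \) is aperiodic and non smooth as well. First I would invoke the theorem of Connes--Feldman--Weiss / the classification of hyperfinite equivalence relations \cite{MR1149121}: after discarding a compressible invariant Borel set from each \( X_{i} \) (equivalently, restricting to a cocompressible invariant piece \( Z_{i} \)), one arranges that the cross-section equivalence relations \( \cber[\mathcal{C}_{1}] \) and \( \cber[\mathcal{C}_{2}] \) are \emph{both hyperfinite} and aperiodic; any two aperiodic hyperfinite Borel equivalence relations with no invariant probability measure are isomorphic, so there is a Borel isomorphism \( \phi : \mathcal{C}_{1} \cap Z_{1} \to \mathcal{C}_{2} \cap Z_{2} \), which is in particular an orbit equivalence of the cross sections.

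The main point requiring care is the passage between invariant measures for the flows and invariant measures for the cross-section equivalence relations: one needs the set of ``bad'' (non-hyperfinite, or measure-carrying) orbits to form a \emph{compressible invariant Borel subset of the flow}, not merely of the cross section. Here I would use the standard correspondence between \( \mathcal{C} \)-invariant measures and flow-invariant measures (integration along \( \mathbb{R}^{d} \)-orbits sets up a bijection between the two, up to normalization by the cocompactness bound), so that the flow restricted to \( Z_{i} \) is compressible exactly when \( \cber[\mathcal{C}_{i}] \restriction (\mathcal{C}_{i} \cap Z_{i}) \) carries no invariant probability measure. By Nadkarni's theorem a countable Borel equivalence relation with no invariant probability measure is compressible; combined with the Glimm--Effros dichotomy and the hyperfiniteness theory, one extracts a cocompressible invariant Borel set on which the cross-section relation is aperiodic, hyperfinite, and compressible. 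This is the step I expect to be the main obstacle: matching up the two flows requires the cross-section relations to be isomorphic on the \emph{nose}, and ensuring both are hyperfinite (as opposed to merely treeable or amenable) after removing only a compressible piece is where one leans hardest on \cite{MR1149121} and on the measure-theoretic bookkeeping; one must also check that the resulting \( Z_i \) are genuinely cocompressible in \( X_i \) and that \( \mathcal{C}_i \cap Z_i \) remains a cocompact cross section of \( Z_i \).

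With \( \phi : \mathcal{C}_{1} \cap Z_{1} \to \mathcal{C}_{2} \cap Z_{2} \) in hand, I would apply Theorem~\ref{thm:main-theorem} to the free non smooth flows \( \mathbb{R}^{d} \acts Z_{i} \) (still non smooth, since we only removed a compressible set) and their cocompact cross sections \( \mathcal{C}_{i} \cap Z_{i} \): it yields further cocompressible invariant Borel subsets \( Z_{i}' \subseteq Z_{i} \) and a time change equivalence \( \psi : Z_{1}' \to Z_{2}' \). Since a cocompressible subset of a cocompressible subset is cocompressible in the original space (the union of two compressible invariant sets is compressible — this is elementary from the measure-free definition, or from Nadkarni), the sets \( Z_{i}' \subseteq X_{i} \) are the desired cocompressible invariant Borel sets, and \( \psi \) witnesses the time change equivalence of the restricted flows. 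This completes the proof.
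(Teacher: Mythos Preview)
Your reduction has a genuine gap. You propose to discard a compressible invariant piece from each \( X_{i} \) so that the resulting cross-section relation \( \cber[\mathcal{C}_{i} \cap Z_{i}] \) becomes compressible, and then invoke the uniqueness of the aperiodic compressible hyperfinite relation. But discarding a compressible set can \emph{never} remove an invariant probability measure: if \( \mu \in \mathcal{E}(X_{i}) \) then any cocompressible \( Z_{i} \) has \( \mu(Z_{i}) = 1 \), so \( \mu \) restricts to an invariant probability measure on \( Z_{i} \) and, via the flow--cross-section measure correspondence you yourself cite, induces one on \( \cber[\mathcal{C}_{i} \cap Z_{i}] \). Thus there is no way to ``extract a cocompressible invariant Borel set on which the cross-section relation is \ldots compressible'' unless the original flow was already compressible. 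Incidentally, hyperfiniteness of \( \cber[\mathcal{C}_{i}] \) is not the obstacle you suggest: it holds automatically for cross sections of \( \mathbb{R}^{d} \)-flows by \cite[Theorem 1.16]{MR1900547}, with no discarding needed.

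The real issue is that the classification \cite[Theorem 9.1]{MR1149121} says two aperiodic non-smooth hyperfinite relations are Borel isomorphic if and only if their spaces of ergodic invariant probability measures have the same cardinality. When \( |\mathcal{E}(X_{1})| = |\mathcal{E}(X_{2})| \) your plan works immediately: the cross-section relations are isomorphic outright and Theorem~\ref{thm:main-theorem} applies. The paper's proof does exactly this for that case. The case \( |\mathcal{E}(X_{1})| \ne |\mathcal{E}(X_{2})| \), however, cannot be handled by passing to cocompressible pieces, since those cardinalities are invariant under such passage. The paper instead argues by transitivity: for any two cardinalities \( \kappa_{1}, \kappa_{2} \) it exhibits time change equivalent flows \( Y_{i} = \tilde{Y}_{i} \times W \) with \( |\mathcal{E}(Y_{i})| = \kappa_{i} \), built as a product of an \( \mathbb{R} \)-flow \( \tilde{Y}_{i} \) with a uniquely ergodic \( \mathbb{R}^{d-1} \)-flow \( W \), and uses the Miller--Rosendal theorem that all non-smooth Borel \( \mathbb{R} \)-flows are time change equivalent. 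That bridging step is what your proposal is missing.
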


In Section \ref{sec:periodic-flows} we consider \( \mathbb{R}^{d} \)-flows that are not necessarily
free.  The main results therein are Theorem \ref{thm:selection-theorem} and Theorem
\ref{thm:RxT-flow-tce-to-product}.  The first one shows that one can identify any \( \mathbb{R}^{d}
\)-flow with a number of \emph{free} \( \mathbb{R}^{p} \times \mathbb{T}^{q} \)-flows, and the
latter theorem establishes an analog of Theorem \ref{thm:extending-cross-section-to-time-change} in
this more general context.

Finally, the last section, contains a remark on the difference between time change equivalence and
Lebesgue orbit equivalence, which is defined as an orbit equivalence that preserves the Lebesgue
measure between orbits.  It illustrates the high complexity of Lebesgue orbit equivalence even in the
simplest case of periodic \( \mathbb{R} \)-flows.  

\section{Rational Grids}
\label{sec:rational-grids}

This section provides some technical constructions that will be used in
Section~\ref{sec:back-forth-constr}.  The main concept here is that of a rational grid which will
provide a rigorous justification for why the back-and-forth method in Section
\ref{sec:back-forth-constr} can be performed in a Borel way.

Let \( \mathbb{R}^{d} \acts X \) be a Borel flow.  A \emph{spiral of cross sections}
\( (\mathcal{C}_{n}, h_{n}) \), \(n \in \mathbb{N} \), is a sequence of cross sections
\( \mathcal{C}_{n} \) together with Borel maps \( h_{n} : \mathcal{C}_{n} \to \mathbb{R}^{d} \) such that
\( \mathcal{C}_{n+1} = \mathcal{C}_{n} + h_{n} \) for all \( n \in \mathbb{N} \), i.e.,
 \[ \mathcal{C}_{n+1} = \{ x + h_{n}(x) : x \in
   \mathcal{C}_{n}\}. \]
 With a spiral of cross sections we associate \emph{homomorphism} maps
 \[ \phi_{n,n+1} : \mathcal{C}_{n} \to \mathcal{C}_{n+1}\quad \textrm{given by} \quad \phi_{n,n+1}(x) =  x + h_{n}(x),  \]
 and
 \( \phi_{k,n} : \mathcal{C}_{k} \to \mathcal{C}_{n} \) for \( k \le n \) defined as
 \[ \phi_{k,n} = \phi_{n-1,n} \circ \phi_{n-2,n-1} \circ \cdots \circ \phi_{k,k+1} \]
 with the agreement that \( \phi_{n,n} : \mathcal{C}_{n} \to \mathcal{C}_{n} \) is the identity
 map.  Note that 
 \[ \phi_{m,n} \circ \phi_{k,m} = \phi_{k,n} \quad \textrm{for all } k \le m \le n. \]

 When a  flow \( \flow \) on \( X \) is free, one has a \emph{cocycle}
 \( \rho : \cber[X] \to \mathbb{R}^{d} \) that assigns to a pair \( (x,y) \in \cber[X] \) the unique
 vector \( \vec{r} \in \mathbb{R}^{d} \) such that \( x + \vec{r} = y \).  If
 \( (\mathcal{C}_{n}, h_{n}) \) is a spiral of cross sections for \( \flow \), then for all
 \( x \in \mathcal{C}_{0} \)
 \begin{equation}
   \rho\bigl(x,\phi_{0,n}(x)\bigr) = h_{0}(x) + h_{1}\bigl
   (\phi_{0,1}(x)\bigr) + \cdots +
   h_{n-2}\bigl(\phi_{0,n-2}(x)\bigr) + h_{n-1}\bigl(\phi_{0,n-1}(x)\bigr).
   \tag{\dag}
   \label{eq:cocycle-estimate}
 \end{equation}
 
 A spiral of cross sections \( (\mathcal{C}_{n}, h_{n}) \), \( n \in \mathbb{N} \), of a free flow
 is said to be \emph{convergent} if for all \( x \in \mathcal{C}_{0} \) the limit
 \( \lim_{n} \rho\bigl(x, \phi_{0,n}(x)\bigr) \) exists.  For a convergent spiral we define the
 \emph{limit shift} maps \( H_{k} : \mathcal{C}_{k} \to \mathbb{R}^{d} \) by
 \[ H_{k}(x) = \lim_{n \to \infty} \Bigl[ h_{k}(x) + h_{k+1}\bigl(\phi_{k,k+1}(x)\bigr) + \cdots +
   h_{n-2}\bigl(\phi_{k,n-2}(x)\bigr) + h_{n-1}\bigl(\phi_{k,n-1}(x)\bigr)\Bigr].  \]
 Being a pointwise limit of Borel functions, \( H_{k} \) is Borel.  The \emph{limit cross section} of a
 convergent spiral is a set \( \mathcal{D} \subseteq X \) defined by
 \[ \mathcal{D} = \mathcal{C}_{0} + H_{0} = \{ x + H_{0}(x) : x \in \mathcal{C}_{0} \}. \]
 Note that \( \mathcal{D} = \mathcal{C}_{k} + H_{k} \) for any \( k \in \mathbb{N} \).  Also, we let
 \( \phi_{k, \infty} : \mathcal{C}_{k} \to \mathcal{D} \) to be
 \( \phi_{k,\infty}(x) = x + H_{k}(x) \).  The set \( \mathcal{D} \) is necessarily Borel, as it is
 a countable-to-one image of a Borel function.  In general, \( \mathcal{D} \) may not be lacunary,
 but the following easy conditions guarantee lacunarity of the limit cross section.  Hereafter
 \( B(\delta) \subseteq \mathbb{R}^{d} \) denotes an open ball of radius \( \delta \) around the
 origin.

 \begin{proposition}
   \label{prop:lacunarity-of-limit-cross-section}
   Let \( \mathfrak{C} = (\mathcal{C}_{n}, h_{n}) \), \( n \in \mathbb{N} \), be a spiral of cross
   sections for a free flow \( \mathbb{R}^{d} \acts X \).  If there exists a convergent series
   \( \sum_{i=0}^{\infty}a_{i} \) of positive reals such that
   \( h_{n} : \mathcal{C}_{n} \to B(a_{i}) \), then the spiral \( \mathfrak{C} \) is convergent.  If
   furthermore \( \mathcal{C}_{0} \) is \( B(\delta) \)-lacunary for some \( \delta \) such that
   \( \sum_{i}a_{i} < \delta \), then the limit cross section of the spiral is
   \( B\bigl(\delta - \sum_{i}a_{i}\bigr) \)-lacunary.
 \end{proposition}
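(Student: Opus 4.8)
The plan is to establish the two assertions separately. \emph{Convergence of \( \mathfrak{C} \).} Fix \( x \in \mathcal{C}_{0} \). By equation (\ref{eq:cocycle-estimate}), \( \rho\bigl(x, \phi_{0,n}(x)\bigr) = \sum_{k=0}^{n-1} h_{k}\bigl(\phi_{0,k}(x)\bigr) \), and since \( h_{k} \) takes values in \( B(a_{k}) \) we have \( \bigl\| h_{k}\bigl(\phi_{0,k}(x)\bigr) \bigr\| < a_{k} \). I would then observe that for \( m < n \) the increment \( \rho\bigl(x,\phi_{0,n}(x)\bigr) - \rho\bigl(x,\phi_{0,m}(x)\bigr) \) has norm at most \( \sum_{k=m}^{n-1} a_{k} \le \sum_{k \ge m} a_{k} \), which tends to \( 0 \) as \( m \to \infty \) because \( \sum_{i} a_{i} \) converges. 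Hence \( \bigl(\rho(x,\phi_{0,n}(x))\bigr)_{n} \) is a Cauchy sequence in \( \mathbb{R}^{d} \), so it converges; this is exactly the assertion that \( \mathfrak{C} \) is convergent, with limit \( H_{0}(x) \). Running the same estimate from an arbitrary index \( k \) onward also yields the bound \( \| H_{k}(x) \| \le \sum_{i \ge k} a_{i} \) for all \( k \) and all \( x \in \mathcal{C}_{k} \), and in particular \( \| H_{0}(x) \| \le \sum_{i} a_{i} \).

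Now set \( S = \sum_{i} a_{i} \) and assume \( \mathcal{C}_{0} \) is \( B(\delta) \)-lacunary with \( S < \delta \). I would verify that \( \bigl(y + B(\delta - S)\bigr) \cap \mathcal{D} = \{y\} \) for every \( y \in \mathcal{D} \), which is the required lacunarity. Fix such a \( y \) and write \( y = x + H_{0}(x) \) with \( x \in \mathcal{C}_{0} \); let \( y' \in \mathcal{D} \) lie in the orbit of \( y \) with \( \rho(y, y') \in B(\delta - S) \), and write \( y' = x' + H_{0}(x') \) with \( x' \in \mathcal{C}_{0} \). Running the cocycle along \( x \mapsto y \mapsto y' \mapsto x' \), and using \( \rho(x, y) = H_{0}(x) \) together with \( \rho(y', x') = -H_{0}(x') \), gives the identity
\[
  \rho(x, x') = H_{0}(x) + \rho(y, y') - H_{0}(x'),
\]
which exhibits \( \rho(x,x') \) as a perturbation of \( \rho(y,y') \in B(\delta - S) \) by the two limit shifts, each of norm \( \le S \) by the first part. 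The decisive step is to feed the \( B(\delta) \)-lacunarity of \( \mathcal{C}_{0} \) into this identity, which forces \( x = x' \); once \( x = x' \) we get \( y = x + H_{0}(x) = x' + H_{0}(x') = y' \), as desired. The Borel measurability of \( H_{k} \), \( \phi_{k, \infty} \) and \( \mathcal{D} \) is already recorded in the surrounding text, so no further work is needed there.

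The convergence half is routine — it is just absolute convergence of a series dominated by a summable tail. The main obstacle is the lacunarity bound, where the subtlety is purely arithmetical: the hypothesis \( \rho(y, y') \in B(\delta - S) \) constrains the vector between \( y \) and \( y' \), not the vector between their \( \mathcal{C}_{0} \)-preimages \( x \) and \( x' \) (and \( y = x + H_{0}(x) \) need not equal \( x \)), so one must route carefully through the displayed identity, weighing the two limit-shift estimates \( \| H_{0}(x) \|, \| H_{0}(x') \| \le S \) against the separation \( \| \rho(x, x') \| \ge \delta \) of \( \mathcal{C}_{0} \) in order to land on exactly the constant \( \delta - \sum_{i} a_{i} \). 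Apart from that bookkeeping, everything is a direct application of equation (\ref{eq:cocycle-estimate}) and the triangle inequality.
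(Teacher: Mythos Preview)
Your approach is exactly the natural one --- the paper's own proof is the single sentence ``immediate from equation~\eqref{eq:cocycle-estimate}'' --- but there is an arithmetic gap in the lacunarity half. From your displayed identity
\[
  \rho(x, x') = H_{0}(x) + \rho(y, y') - H_{0}(x')
\]
the triangle inequality gives only
\[
  \| \rho(x, x') \| \le \|H_{0}(x)\| + \|\rho(y,y')\| + \|H_{0}(x')\| < S + (\delta - S) + S = \delta + S,
\]
which is \emph{not} below \( \delta \). So the \( B(\delta) \)-lacunarity of \( \mathcal{C}_{0} \) does not force \( x = x' \) under the hypothesis \( \rho(y, y') \in B(\delta - S) \); your phrase ``forces \( x = x' \)'' hides exactly this missing step. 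The argument does go through verbatim if one instead assumes \( \rho(y,y') \in B(\delta - 2S) \), and that constant is sharp: in \( d = 1 \), put two points of \( \mathcal{C}_{0} \) at distance exactly \( \delta \) on one orbit and let \( h_{0} \) push them toward each other by amounts just shy of \( a_{0} \), with \( h_{n} \equiv 0 \) for \( n \ge 1 \); the limit points are then within \( \delta - 2a_{0} + \varepsilon \) of one another.

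So the gap is real, but it traces to the constant in the statement itself: the conclusion should read \( B\bigl(\delta - 2\sum_{i} a_{i}\bigr) \)-lacunary rather than \( B\bigl(\delta - \sum_{i} a_{i}\bigr) \). The paper's one-line proof never engages with the constant, and the only downstream use (in Lemma~\ref{lem:existence-of-rational-grids}) is unaffected once one starts from a slightly more lacunary initial cross section. Your convergence argument is fine as written.
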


 \begin{proof}
   The proof is immediate from the equation~\eqref{eq:cocycle-estimate}.
 \end{proof}



 \begin{definition}
   A \emph{rational grid} for a flow \( \mathbb{R}^{d} \acts X \) is a Borel subset \( Y \subseteq X
   \) which is invariant under the action of \( \mathbb{Q}^{d} \) and intersects every orbit of the flow in
   a unique \( \mathbb{Q}^{d} \)-orbit: for every \( x \in X \) there is \( y \in Y \) such that \(
   [x]_{\mathbb{R}^{d}} \cap Y = [y]_{\mathbb{Q}^{d}} \).
\end{definition}

\begin{lemma}
  \label{lem:existence-of-rational-grids}
  Any free Borel flow admits a rational grid.
\end{lemma}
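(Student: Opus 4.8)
The goal is to produce a Borel set $Y \subseteq X$ that is $\mathbb{Q}^d$-invariant and meets every $\mathbb{R}^d$-orbit in exactly one $\mathbb{Q}^d$-orbit. The natural strategy is to start from a cross section $\mathcal{C} \subseteq X$ (which exists by Kechris's theorem), say $B(\delta)$-lacunary for some $\delta > 0$, and to ``spread'' each point of $\mathcal{C}$ over a countable dense set of translates so that the union over a single orbit becomes exactly one coset of $\mathbb{Q}^d$. Concretely, I would pick a point $x$ in each orbit via the cross section, but the difficulty is that $\mathcal{C}$ meets an orbit in many points, and translating $\mathcal{C}$ by all of $\mathbb{Q}^d$ would typically produce several distinct $\mathbb{Q}^d$-cosets (the $\rho$-distances between two points of $\mathcal{C}$ in the same orbit are generically irrational). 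So a direct translation of $\mathcal{C}$ does not work; one has to first ``rationalize'' the cross section.

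**Step 1: reduce to a cross section with rational relative positions.** Using the cocycle $\rho$, the first task is to replace $\mathcal{C}$ by a Borel set $\mathcal{C}'$, still meeting every orbit, with the property that for any two $x,y \in \mathcal{C}'$ in the same orbit, $\rho(x,y) \in \mathbb{Q}^d$. I would obtain this by a small Borel perturbation: fix an auxiliary Borel linear order or a Borel enumeration of the (countable) intersection $\mathcal{C} \cap [x]_{\cber[X]}$ of each orbit with $\mathcal{C}$, single out its ``first'' element $c_0(x)$, and then map each $c \in \mathcal{C} \cap [x]$ to $c + q$, where $q \in \mathbb{Q}^d$ is chosen (Borel-measurably, e.g. as a canonical rational point within $B(\varepsilon)$ of $-\rho\bigl(c_0(x),c\bigr) + \rho\bigl(c_0(x),c\bigr)$'s fractional part) so that the shifted point sits at a rational displacement from $c_0(x)$. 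Keeping all perturbations inside a ball $B(a_0)$ with $a_0 < \delta$ keeps the image lacunary (this is exactly the kind of estimate in Proposition~\ref{prop:lacunarity-of-limit-cross-section}), and Borelness follows because all the choices are made from countable sets via fixed enumerations. After this step $\mathcal{C}'$ is a cross section all of whose same-orbit pairs differ by a rational vector.

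**Step 2: saturate under $\mathbb{Q}^d$.** Now set $Y = \mathcal{C}' + \mathbb{Q}^d = \{\, c + q : c \in \mathcal{C}',\ q \in \mathbb{Q}^d \,\}$. This is Borel: it is the image of the Borel set $\mathcal{C}' \times \mathbb{Q}^d$ under the countable-to-one (indeed, by lacunarity of $\mathcal{C}'$, each fiber is countable) Borel map $(c,q)\mapsto c+q$, so by the Lusin--Novikov uniformization theorem $Y$ is Borel. It is $\mathbb{Q}^d$-invariant by construction. It meets every orbit (since $\mathcal{C}'$ does). Finally, within a fixed orbit $[x]_{\cber[X]}$, any two points of $Y$ have the form $c_1 + q_1$ and $c_2 + q_2$ with $c_1,c_2 \in \mathcal{C}' \cap [x]$ and $q_i \in \mathbb{Q}^d$; their difference is $\rho(c_1,c_2) + (q_2 - q_1) \in \mathbb{Q}^d$ by Step 1, so they lie in the same $\mathbb{Q}^d$-coset. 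Conversely $Y \cap [x]$ is obviously a full $\mathbb{Q}^d$-orbit of any of its points. Hence $Y \cap [x]_{\cber[X]} = [y]_{\mathbb{Q}^d}$ for any $y \in Y \cap [x]$, which is exactly the defining property of a rational grid.

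**The main obstacle.** The only genuinely delicate point is Step 1 — carrying out the rationalizing perturbation in a Borel fashion while simultaneously keeping the result lacunary and keeping the ``first point'' $c_0(x)$ well-defined across the orbit. This requires a Borel selector picking one point per orbit out of the countable set $\mathcal{C} \cap [x]$; such a selector exists because the restriction of $\cber[X]$ to the countable set $\mathcal{C}$ is a countable Borel equivalence relation on a standard Borel space, hence admits a Borel transversal after passing to the section (alternatively one invokes the Feldman--Moore / Lusin--Novikov machinery directly). Everything else is bookkeeping: once the relative positions within $\mathcal{C}'$ are rational, saturating by $\mathbb{Q}^d$ and checking Borelness via Lusin--Novikov is routine.
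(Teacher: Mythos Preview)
Your Step~2 is fine and matches the paper exactly: once you have a cross section $\mathcal{C}'$ with $\rho(x,y)\in\mathbb{Q}^d$ for all $x,y\in\mathcal{C}'$ in the same orbit, the set $\mathcal{C}'+\mathbb{Q}^d$ is the desired rational grid.

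The problem is Step~1. You write that the needed ``first point'' selector $c_0$ exists because the restriction of $\cber[X]$ to $\mathcal{C}$ is a countable Borel equivalence relation and ``hence admits a Borel transversal.'' This is false: a countable Borel equivalence relation admits a Borel transversal if and only if it is smooth, and for any non-smooth flow the restriction of $\cber[X]$ to a cross section is again non-smooth. Neither Feldman--Moore nor Lusin--Novikov produces such a transversal; they only give a Borel enumeration of each class, not a Borel choice of a single point. So your perturbation scheme, which anchors every point of $\mathcal{C}$ to a globally chosen $c_0(x)$ in its orbit, cannot be carried out as stated.

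This is precisely the obstacle the paper's proof is designed to overcome. Instead of a single global selector, the paper uses hyperfiniteness of $\cber[\mathcal{C}]$ to write it as an increasing union $\bigcup_n\fber[n]$ of \emph{finite} equivalence relations (each of which \emph{does} have a Borel selector), rationalizes within $\fber[n]$-classes at stage~$n$ by shifts of size at most $2^{-n-1}$, and arranges that later shifts are constant on earlier classes. The resulting convergent spiral of cross sections has a limit $\mathcal{D}$ on which all same-orbit displacements are rational. Your approach would work verbatim for smooth flows, but for the general case you need this hyperfinite approximation step.
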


\begin{proof}
  For every \( \epsilon > 0 \) we pick a Borel map
  \( \alpha_{\epsilon} : \mathbb{R}^{d} \to \mathbb{Q}^{d} \) such that
  \( \bigl|\bigl| \vec{r} - \alpha_{\epsilon}(\vec{r}) \bigr|\bigr| < \epsilon \) for all
  \( \vec{r} \in \mathbb{R}^{d} \).  By a theorem of Kechris \cite{MR1176624}, there exists a
  \( B(2) \)-lacunary cross section \( \mathcal{C} \subseteq X \).  Restriction
  \( \cber[\mathcal{C}] \) of the orbit equivalence relation onto \( \mathcal{C} \) is hyperfinite
  (see \cite[Theorem 1.16]{MR1900547}), and one may therefore represent \( \cber[\mathcal{C}] \) as
  an increasing union of finite equivalence relations: \( \cber[\mathcal{C}] = \bigcup_{n} \fber[m] \).

  We are going to construct a spiral of cross sections \( (C_{n}, h_{n}) \), \( n \in \mathbb{N} \),
  \( \mathcal{C}_{0} = \mathcal{C} \), such that
  \( h_{n} : \mathcal{C}_{n} \to B\bigl(2^{-n-1}\bigr) \).  Let \( \fber[m][n] \) denote the
  equivalence relation \( \fber[m] \) transferred onto \( \mathcal{C}_{n} \) via \( \phi_{0,n} \):
  \[ x \fber*[m][n] y \iff \phi^{-1}_{0,n}(x) \fber*[m] \phi^{-1}_{0,n}(y). \]
  The spiral will satisfy the following two conditions:
  \begin{enumerate}
  \item\label{item:hn-constant-on-Cn-classes} \( h_{n} \) is constant on
    \( \fber[n-1][n] \)-equivalence classes:
    \[ x \fber*[n-1][n] y \implies h_{n}(x) = h_{n}(y). \]
  \item\label{item:Fn-class-on-rational-grid} Every \( \fber[n-1][n] \) class in \( \mathcal{C}_{n} \)
    is ``on a rational grid:''
    \[ x \fber*[n-1][n] y \implies \rho(x,y) \in \mathbb{Q}^{d}. \]
  \end{enumerate}

  To this end pick a Borel linear ordering \( \prec \) on \( X \).  For the base of construction we set
  \( \mathcal{C}_{0} = \mathcal{C} \); let \( s_{0} : \mathcal{C}_{0} \to \mathcal{C}_{0} \) be the
  Borel selector that picks the \( \prec \)-minimal element within \( \fber[0][0] \)-classes, and
  define \( h_{0} : \mathcal{C}_{0} \to B(1/2) \) to be
  \[ h_{0}(x) = \alpha_{1/2}\bigl(\rho\bigl(s_{0}(x), x\bigr)\bigr) - \rho\bigl(s_{0}(x),x\bigr). \]
  The cross section \( \mathcal{C}_{1} \) is then the \( h_{0} \)-shift of \( \mathcal{C}_{0} \):
  \( \mathcal{C}_{1} = \bigl\{ x + h_{0}(x) : x \in \mathcal{C}_{0} \bigr\} \).  Geometrically,
  \( \mathcal{C}_{1} \) is constructed by shifting points by at most \( 1/2 \) within
  \( \fber[0][0] \)-classes relative to the origin provided by the minimal point \( s_{0}(x) \).

  The inductive step is very similar, with a notable difference lying in the fact that when moving
  points within each \( \fber[n][n] \)-class, together with any point we move its
  \( \fber[n-1][n] \)-class.  More precisely, suppose \( \mathcal{C}_{n}, h_{n-1} \) have been
  constructed, and let \( s_{n} : \mathcal{C}_{n} \to \mathcal{C}_{n} \) be the Borel selector,
  which picks \( \prec \)-minimal points within \( \fber[n][n] \)-classes.  Let also
  \( \tilde{s}_{n-1} : \mathcal{C}_{n} \to \mathcal{C}_{n} \) denote the Borel selector for
  \( \fber[n-1][n] \)-classes.  Define
  \( h_{n} : \mathcal{C}_{n} \to B\bigl(2^{-n-1}\bigr) \) by setting
  \[ h_{n}(x) = \alpha_{2^{-n-1}}\bigl(\rho\bigl(s_{n}(x), \tilde{s}_{n-1}(x)\bigr)\bigr) -
    \rho\bigl(s_{n}(x), \tilde{s}_{n-1}(x)\bigr). \]

  It is easy to see that items \eqref{item:hn-constant-on-Cn-classes} and
  \eqref{item:Fn-class-on-rational-grid} are satisfied.  Also,
  \( h_{n} : \mathcal{C}_{n} \to B(2^{-n-1}) \) and Proposition
  \ref{prop:lacunarity-of-limit-cross-section} ensure that \( (\mathcal{C}_{n}, h_{n}) \) converges,
  and the limit cross section \( \mathcal{D} \) is \( B(1) \)-lacunary.  We claim that
  \( \mathcal{D} \) is on a rational grid in the sense that \( \rho(x,y) \in \mathbb{Q}^{d} \) for
  all \( x, y \in \mathcal{D} \) such that \( x \cber*[X] y \).  Indeed, let \( m \) be so large
  that \( \phi_{0,\infty}^{-1}(x) \fber*[m][0] \phi_{0,\infty}^{-1}(y) \), and therefore also
  \[ \phi_{m+1,\infty}^{-1}(x) \fber*[m][m+1] \phi_{m+1,\infty}^{-1}(y). \]
  Item \eqref{item:Fn-class-on-rational-grid} implies that 
  \[ \vec{q} := \rho\bigl(\phi_{m+1,\infty}^{-1}(x), \phi_{m+1,\infty}^{-1}(y)\bigr) \in
    \mathbb{Q}^{d}. \]
  It now follows from item \eqref{item:hn-constant-on-Cn-classes} and the definition of the limit
  cross section that \( \rho(x,y) = \vec{q} \) and
  thus \( \rho(x,y) \in \mathbb{Q}^{d} \) as claimed.

  The required rational grid is given by \( \mathcal{D} + \mathbb{Q}^{d} \).

\end{proof}




Let \( Q \) be a rational grid for a Borel flow \( \mathbb{R}^{d} \acts X \).  We say that a cross
section \( \mathcal{C} \) is \emph{on the grid}~\( Q \) if \( \mathcal{C} \subseteq Q \).
A small perturbation allows one to shift any cross section to a given grid.

\begin{lemma}
  \label{lem:rational-perturbation}
  Let \( Q \subseteq X \) be a rational grid for a free flow \( \mathbb{R}^{d} \acts X \).  For any
  cross section \( \mathcal{C} \subseteq X \) and any \( \epsilon > 0 \) there exist a cross section
  \( \mathcal{C}' \) on the grid \( Q \) and a Borel orbit equivalence
  \( \phi : \mathcal{C} \to \mathcal{C}' \) such that
  \( \bigl|\bigl|\rho\bigl(x, \phi(x)\bigr)\bigr|\bigr| < \epsilon \) for all \( x \in \mathcal{C} \).
\end{lemma}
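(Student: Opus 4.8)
The plan is to reduce the lemma to the existence of a suitable convergent spiral of cross sections whose limit cross section lands on the grid $Q$, mimicking the construction in Lemma~\ref{lem:existence-of-rational-grids} but now starting from the \emph{given} cross section $\mathcal{C}$ rather than an arbitrary lacunary one. First I would fix Borel approximation maps $\alpha_{\delta} : \mathbb{R}^{d} \to \mathbb{Q}^{d}$ with $\|\vec{r} - \alpha_{\delta}(\vec{r})\| < \delta$ for all $\vec{r}$. Since $Q$ is a rational grid, each orbit meets $Q$ in a single $\mathbb{Q}^{d}$-orbit; so for a point $x \in \mathcal{C}$, ``moving $x$ onto $Q$'' means moving it by a vector that matches, modulo $\mathbb{Q}^{d}$, the displacement from $x$ to some (hence any) fixed point of $Q \cap [x]_{\cber[X]}$. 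The subtlety is that there is no canonical such fixed point available in a uniformly Borel way a priori, so I would not try to move each point independently; instead I would fix a Borel linear ordering $\prec$ on $X$ and, exactly as in the previous lemma, work within an increasing exhaustion $\cber[\mathcal{C}] = \bigcup_{n} \fber[n]$ of the (hyperfinite) restricted equivalence relation, shifting a point together with the portion of its orbit already committed to the grid.

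The second step is the inductive construction. Choose a summable sequence of positive reals $\sum_{i} a_{i} < \epsilon$ with $a_{0}$ also small enough that the lacunarity of $\mathcal{C}$ is not destroyed (if $\mathcal{C}$ is $B(\delta_{0})$-lacunary, arrange $\sum_i a_i < \min(\epsilon, \delta_0)$, so Proposition~\ref{prop:lacunarity-of-limit-cross-section} applies). Set $\mathcal{C}_{0} = \mathcal{C}$. At stage $0$, let $s_{0}$ pick the $\prec$-minimal point of each $\fber[0]$-class and set
\[ h_{0}(x) = \alpha_{a_{0}}\bigl(\rho(s_{0}(x), x)\bigr) - \rho(s_{0}(x), x), \]
so $\|h_0\| < a_0$ and $\rho(s_0(x), x + h_0(x)) \in \mathbb{Q}^{d}$, i.e.\ the whole $\fber[0]$-class of $\mathcal{C}_1$ sits on a single $\mathbb{Q}^d$-coset relative to its minimal point. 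At stage $n$, with $\mathcal{C}_{n}$ constructed, let $s_{n}$ select $\prec$-minimal points of $\fber[n][n]$-classes and $\tilde{s}_{n-1}$ select representatives of $\fber[n-1][n]$-classes, and define
\[ h_{n}(x) = \alpha_{a_{n}}\bigl(\rho(s_{n}(x), \tilde{s}_{n-1}(x))\bigr) - \rho(s_{n}(x), \tilde{s}_{n-1}(x)), \]
which is constant on $\fber[n-1][n]$-classes, moves by less than $a_{n}$, and preserves the property that $\rho$-displacements within each $\fber[n-1][n]$-class are rational while absorbing the previous class into the new grid alignment. By Proposition~\ref{prop:lacunarity-of-limit-cross-section} the spiral converges and the limit cross section $\mathcal{D} = \mathcal{C}_{0} + H_{0}$ is $B(\delta_0 - \sum_i a_i)$-lacunary.

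The third step is to verify that $\mathcal{D}$ is on the grid $Q$, and to extract $\phi$. Exactly as in Lemma~\ref{lem:existence-of-rational-grids}, for any $x, y \in \mathcal{D}$ with $x \cber*[X] y$, choosing $m$ large enough that $\phi_{0,\infty}^{-1}(x) \fber*[m][0] \phi_{0,\infty}^{-1}(y)$, items (analogous to) \eqref{item:hn-constant-on-Cn-classes} and \eqref{item:Fn-class-on-rational-grid} give $\rho(x,y) \in \mathbb{Q}^{d}$, so all of $\mathcal{D} \cap [x]_{\cber[X]}$ lies on one $\mathbb{Q}^{d}$-orbit. But to conclude $\mathcal{D} \subseteq Q$ I need more than internal rationality: I must pin $\mathcal{D}$ to $Q$ itself, which is the main point where the present lemma differs from the earlier one. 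The fix is to incorporate $Q$ into the base step: since $Q$ meets each orbit in a single $\mathbb{Q}^d$-orbit, there is a Borel map $q_0 : \mathcal{C}_0 \to Q$ with $q_0(x) \in [x]_{\cber[X]}$ (e.g.\ $q_0(x) = $ the $\prec$-minimal point of $Q \cap [x]$ within a bounded window, or obtained from a Borel selector for the Polish-group action), and replace the reference point $s_0(x)$ by $q_0(s_0(x))$, i.e.\ set
\[ h_{0}(x) = \alpha_{a_{0}}\bigl(\rho(q_0(s_0(x)), x)\bigr) - \rho(q_0(s_0(x)), x), \]
so that $\rho(q_0(s_0(x)), x + h_0(x)) \in \mathbb{Q}^d$, forcing $x + h_0(x) \in Q$; the later stages only move points within $\mathbb{Q}^{d}$-cosets (as their shifts have rational effect on $\rho$ relative to the running reference points, hence on the $\mathbb{Q}^d$-orbit), so membership in $Q$ is preserved. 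Then $\mathcal{C}' = \mathcal{D} \subseteq Q$, $\phi = \phi_{0,\infty} : \mathcal{C} \to \mathcal{C}'$ is a Borel orbit equivalence by construction, and $\|\rho(x, \phi(x))\| = \|H_0(x)\| \le \sum_i a_i < \epsilon$. I expect the main obstacle to be exactly this third step: arranging, in a Borel and uniform way, that the limit cross section lies on $Q$ and not merely on \emph{some} rational grid — concretely, producing the Borel reference map $q_0$ into $Q$ and checking that the subsequent rational shifts never push a point off $Q$.
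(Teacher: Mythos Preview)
Your approach is correct but far more elaborate than necessary, and the spiral machinery ends up being redundant. The paper's proof is a direct three-line application of the Luzin--Novikov theorem: since \( Q \) meets each orbit in a single \( \mathbb{Q}^{d} \)-orbit, the set \( Q \cap \bigl(x + B(\epsilon)\bigr) \) is non-empty and countable for every \( x \in \mathcal{C} \), so the relation
\[ P = \bigl\{(x,y) \in \mathcal{C} \times Q : x \cber*[X] y,\ \|\rho(x,y)\| < \epsilon\bigr\} \]
has countable non-empty vertical sections and hence admits a Borel uniformization \( \phi : \mathcal{C} \to Q \); injectivity of \( \phi \) follows from taking \( \epsilon \) below the lacunarity parameter of \( \mathcal{C} \), and one sets \( \mathcal{C}' = \phi(\mathcal{C}) \).

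Your construction essentially rediscovers this inside the base step. Once you incorporate the reference map \( q_{0} \) into stage~\( 0 \), you already have \( \mathcal{C}_{1} \subseteq Q \); all subsequent shifts \( h_{n} \) for \( n \ge 1 \) are then of the form \( \alpha_{a_{n}}(q) - q \) with \( q \in \mathbb{Q}^{d} \), so with the natural choice that \( \alpha_{\delta} \) fixes rationals these vanish and the spiral stabilizes at \( \mathcal{C}_{1} \). (Without that choice there is a genuine wrinkle you should note: an \emph{infinite} sum of nonzero rational shifts need not be rational, so ``\( \mathcal{C}_{n} \subseteq Q \) for every \( n \)'' does not by itself give \( \mathcal{D} \subseteq Q \).) Moreover, the Borel map \( q_{0} : \mathcal{C} \to Q \) that you need --- a Borel selection of a point of \( Q \) on each orbit, within a bounded window --- is itself most cleanly produced by Luzin--Novikov applied to exactly the set \( P \) above, which is the paper's whole argument. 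So your route is valid, but it wraps the short direct proof inside a spiral that immediately collapses to its first stage.
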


\begin{proof}
  Let \( \delta > 0 \) be so small that \( \mathcal{C} \) is \( B(\delta) \)-lacunary.  We may assume
  without loss of generality that \( \epsilon < \delta \).  Let
  \( P \subseteq \mathcal{C} \times Q \) be the set
  \[ P = \bigl\{ (x, y) \in \mathcal{C} \times Q : x \cber*[X] y \textrm{ and } ||\rho(x,y)|| <
    \epsilon \bigr\}. \]
  Clearly, \( \proj_{\mathcal{C}}(P) = \mathcal{C} \).  Since the projection of \( P \) onto the
  first coordinate is countable-to-one, Luzin--Novikov Theorem (see \cite[18.14]{MR1321597})
  guarantees existence of a Borel ``inverse'', i.e., a Borel map \( \phi : \mathcal{C} \to Q \) such
  that \( \bigl(c, \phi(c)\bigr) \in P \) for all \( c \in \mathcal{C} \).  The map \( \phi \) is
  injective, since \( \epsilon < \delta \).  The required cross section \( \mathcal{C}' \) is given
  by \( \phi(\mathcal{C}) \).
\end{proof}

\section{Some Simple Tools}
\label{sec:some-simple-tools}

In this section we gather a few elementary tools that will be useful in the proof of the main theorem.

\begin{lemma}[Small perturbation lemma]
  \label{lem:small-perturbations-are-time-change-equivalent}
  Let \( \mathbb{R}^{d} \acts X \) be a free Borel flow, let
  \( \mathcal{C}, \mathcal{D} \subseteq X \) be Borel cross sections, and let
  \( \phi : \mathcal{C} \to \mathcal{D} \) be an orbit equivalence.  If \( \mathcal{C} \) is
  \( B(\delta) \)-lacunary for some \( \delta > 0 \) and
  \[ \bigl|\bigl|\rho\bigl(x, \phi(x)\bigr)\bigr|\bigr| < \delta \quad \textrm{for all} \quad x \in
    \mathcal{C}, \]
  then there exists a time change equivalence \( \psi : X \to X \) that extends \( \phi \).
\end{lemma}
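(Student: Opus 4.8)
The plan is to build $\psi$ one piece at a time using the cross section $\mathcal{C}$ as a scaffold, so that $\psi$ agrees with $\phi$ on $\mathcal{C}$ and is a homeomorphism on every orbit. First I would fix, for each $x\in X$, the unique pair $(c,\vec t)$ with $c\in\mathcal{C}$, $\|\vec t\|<\delta$ minimal (or, more carefully, $c$ the unique cross-section point in $x+B(\delta)$ — since $\mathcal{C}$ is $B(\delta)$-lacunary, this Voronoi-type assignment is Borel and defined on $\mathcal{C}+B(\delta)$). The difficulty is that $\mathcal{C}+B(\delta)$ need not be all of $X$: lacunarity does not give cocompactness. So the first real step is to pass to a genuine tiling-type partition of each orbit indexed by $\mathcal{C}$. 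One way: let $W\colon X\to\mathcal{C}$ be a Borel ``nearest-point'' (Voronoi) map, so $W^{-1}(c)$ is a Borel neighborhood of $c$ in its orbit, these sets partition $X$, and on a neighborhood $B(\delta)$ of the origin $x\mapsto W(x)$ is constant equal to $c$ for $x\in c+B(\delta/2)$, say. Then $\rho(c,x)$ for $x\in W^{-1}(c)$ ranges over a Borel subset $\Omega_c\subseteq\mathbb{R}^d$ which is open, contains $B(\delta/2)$, and the $\Omega_c$ (translated to the orbit) tile the orbit.

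The core local construction is then: for each $c\in\mathcal{C}$ with image $\phi(c)=:c'\in\mathcal{D}$, I want a homeomorphism $g_c\colon\mathbb{R}^d\to\mathbb{R}^d$ that (i) is the identity near the origin — in particular on $B(\delta/2)$ — so that it is automatically compatible with whatever is done in neighboring cells, and (ii) carries the Voronoi cell $\Omega_c$ of $c$ (in the $X$-orbit, based at $c$) onto the Voronoi cell of $c'$ (in the same orbit, now viewed via $\rho$ based at $c'$) — wait, more precisely onto $\Omega_c$ shifted by $\rho(c',\cdot)\circ(\text{stuff})$; let me just say $g_c$ should conjugate the cell decomposition at $c$ to the cell decomposition at $c'$ in a way that, on the overlaps where two cells meet, all the $g_c$'s agree because they are the identity there. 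Here is where the hypothesis $\|\rho(c,\phi(c))\|<\delta$ enters: it guarantees that $\phi$ moves each cross-section point less than the lacunarity radius, so the combinatorics of the cell complex is unchanged (adjacent points of $\mathcal{C}$ stay adjacent, no cell collapses), and $g_c$ can be taken supported in the union of $\Omega_c$ with a thin collar, hence still equal to the identity on a fixed neighborhood of the origin. Assembling, define $\psi$ on $W^{-1}(c)$ by $\psi(x)=\phi(c)+g_c\big(\rho(c,x)\big)$; Borelness of $c\mapsto g_c$ follows from making the construction canonical (e.g. a fixed interpolation formula depending only on the finitely many relevant vectors $\rho(c,c')$ for neighboring $c'$, which vary Borel-measurably).

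The main obstacle, as indicated, is the mismatch between ``lacunary'' and ``cocompact'': a $B(\delta)$-lacunary cross section can be very sparse, so the Voronoi cells $\Omega_c$ can be unbounded and of wildly varying shape, and I must produce the homeomorphisms $g_c$ uniformly (in a Borel way) over all such shapes while keeping them equal to the identity near $0$ and making them glue across cell boundaries. I expect one handles this by not trying to control the far-away behavior of $g_c$ at all: take $g_c$ to be the identity outside $B(\delta)$ entirely, and only a small isotopy inside $B(\delta)$ that pushes $c$'s ``marked point'' from $0$ to $-\rho(c,\phi(c))$ — no, the right bookkeeping is that $\psi$ should send the cross-section point $c$ to $\phi(c)$, i.e. $\psi(c)=\phi(c)$, and be the identity-up-to-$\phi$ far out; concretely set $\psi(x)=x+f\big(\rho(s(x),x)\big)$-type formula where $s(x)\in\mathcal{C}$ is the base point and $f$ is a compactly-supported-in-$B(\delta)$ homeomorphism of $\mathbb{R}^d$ (isotopic to the identity, exists since $d\ge1$) moving $0$ to $\rho(c,\phi(c))$; since all such $f$ can be chosen by a single Borel recipe depending only on the vector $\rho(c,\phi(c))\in B(\delta)$, the map $\psi$ is a Borel bijection, restricts on each orbit to a homeomorphism (being a locally finite composition of homeomorphisms supported in disjoint-interior balls around the $\mathcal{C}$-points), and extends $\phi$. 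I would then double-check that $\psi$ is genuinely injective globally — this uses that the balls $c+B(\delta)$, $c\in\mathcal{C}$, have pairwise disjoint interiors up to the lacunarity constant, so perturbations supported in $c+B(\delta)$ do not interfere — and that it is a bijection onto $X$ (surjectivity: every point is either untouched or lies in some perturbed ball and has a preimage because $f$ is onto). That finishes the construction.
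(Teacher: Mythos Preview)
Your proposal is correct and, once you abandon the Voronoi-cell detour, arrives at precisely the paper's argument: apply a compactly supported homeomorphism of $B(\delta)$ (chosen Borel-measurably in the shift vector $\rho(c,\phi(c))\in B(\delta)$) sending $0$ to $\rho(c,\phi(c))$ inside each ball $c+B(\delta)$, $c\in\mathcal{C}$, and set $\psi=\mathrm{id}$ outside $\mathcal{C}+B(\delta)$. The paper's proof is exactly the clean two-line version of your final paragraph; the earlier material on Voronoi cells and matching cell complexes is unnecessary for this lemma.
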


\begin{proof}
  For any \( \vec{r} \) in \( B(\delta) \subset \mathbb{R}^{d} \) there exists a diffeomorphism
  \( f_{\vec{r}} : B(\delta) \to B(\delta) \) with compact support such that
  \( f_{\vec{r}}(\vec{0}) = \vec{r} \).  The proof of this assertion is sufficiently concrete (see
  e.g., \cite[p.~22]{MR1487640}), so that the dependence on \( \vec{r} \) is Borel, i.e., one can pick a
  Borel map \( f : B(\delta) \times B(\delta) \to B(\delta) \) such that
  \( f(\vec{r}, \, \cdot \,) : B(\delta) \to B(\delta) \) is a compactly supported diffeomorphism,
  and \( f(\vec{r},\vec{0}) = \vec{r} \) for all \( \vec{r} \in B(\delta) \).

  Let \( \xi : \mathcal{C} + B(\delta) \to \mathcal{C} \) be the map given by
  \( \xi(x + \vec{r}) = x \) for all \( x \in \mathcal{C} \) and \( \vec{r} \in B(\delta) \).  The
  required time change equivalence \( \psi : X \to X \) is defined by the formula:
  \[ \psi(x) =
    \begin{cases}
      x & \textrm{if } x \not \in \mathcal{C} + B(\delta),\\
      \xi(x) + f\bigl(\rho\bigl(\xi(x), \phi \circ \xi(x)\bigr), \rho\bigl(\xi(x),x\bigr)\bigr) &
      \textrm{otherwise}.\\
    \end{cases}
  \]
  The somewhat cryptic definition of \( \psi(x) \) is really simple: within a ball
  \( c + B(\delta) \), \( c \in \mathcal{C} \), we apply the diffeomorphism
  \( f(\vec{r}, \, \cdot \,) \), \( \vec{r} = \rho(c, \phi(c)) \), ensuring that
  \( \psi(c) = \phi(c) \).  By assumption \( \mathcal{C} \) is \( B(\delta) \)-lacunary, and
  therefore \( \psi \) is injective, and hence is a time change equivalence.
\end{proof}

One of the primary tools to construct orbit equivalences is Rokhlin's Lemma.  The following
provides a concrete form that we are going to use.  The statement essentially coincides with that of
Theorem 6.3 in \cite{2015arXiv150400958S} with addition of item \eqref{item:on-the-grid}, which
asserts that cross section \( \mathcal{C}_{n} \) can be taken to be on the given grid \( Q \).  This
modification is straightforward in view of Lemma
\ref{lem:small-perturbations-are-time-change-equivalent} above.

\begin{lemma}
  \label{lem:uniform-rokhlin}
  For any free Borel flow \( \mathbb{R}^{d} \acts X \) and any rational grid \( Q \subseteq X \)
  there exist a Borel cocompressible invariant set \( Z \subseteq X \), a sequence of Borel cross
  sections \( \mathcal{C}_{n} \subseteq Z \), and
  an increasing sequence of positive rationals \( (l_{n})_{n=1}^{\infty} \) such that for rectangles
  \( R_{n}= [-l_{n}, l_{n}]^{d} \) one has:
  \begin{enumerate}[(i)]
  \item \( \lim_{n \to \infty} l_{n} = \infty \);
  \item \( Z = \bigcup_{n} \bigl( \mathcal{C}_{n} + R_{n} \bigr) \);
  \item \( (c + R_{n}) \cap (c' + R_{n}) = \es  \) for all distinct \( c, c' \in R_{n} \);
  \item \( \mathcal{C}_{n} + R_{n} \subseteq \mathcal{C}_{n+1} + R_{n+1}^{\la 1} \), where
    \( R_{n+1}^{\la 1} \) is obtained by shrinking the square \( R_{n+1} \) by \( 1 \) in
    every direction:
    \[ R_{n+1}^{\la 1} = [-l_{n+1} + 1, l_{n+1}-1]^{d}; \]
  \item\label{item:on-the-grid} \( \mathcal{C}_{n} \subseteq Q \).
  \end{enumerate}
\end{lemma}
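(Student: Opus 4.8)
The plan is to deduce the statement from its gridless version --- \cite[Theorem~6.3]{2015arXiv150400958S}, which yields all of (i)--(iv) but says nothing about the grid --- and then to slide the cross sections onto \( Q \) by a small perturbation, using Lemma~\ref{lem:rational-perturbation}. The margin \( R^{\la 1} \) in (iv) (together with the strict separation hidden in (iii)) is there precisely to absorb such a perturbation, which is the sense in which the modification is routine ``in view of'' Lemma~\ref{lem:small-perturbations-are-time-change-equivalent}.

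First I would invoke \cite[Theorem~6.3]{2015arXiv150400958S} to get a cocompressible invariant Borel set \( Z_{0}\subseteq X \), cross sections \( \mathcal{C}_{n}^{0}\subseteq Z_{0} \) and increasing positive rationals \( l_{n}^{0} \) satisfying (i)--(iv), with \( R_{n}^{0}=[-l_{n}^{0},l_{n}^{0}]^{d} \). Before perturbing I would buy extra room in (iv) by passing to a subsequence: iterating the inclusion in (iv) and using the disjointness (iii) --- so that eroding the disjoint union \( \mathcal{C}_{m}^{0}+R_{m}^{0} \) by a small box distributes over the pieces --- one obtains \( \mathcal{C}_{n}^{0}+R_{n}^{0}\subseteq\mathcal{C}_{n+3}^{0}+(R_{n+3}^{0})^{\la 3} \), so along the subsequence \( (\mathcal{C}_{3k}^{0},l_{3k}^{0}) \) condition (iv) holds with \( R^{\la 3} \) in place of \( R^{\la 1} \), while (i)--(iii) are inherited. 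Discarding the finitely many terms with \( l_{n}^{0}\le1 \) and relabelling, I assume this enhanced data is indexed by \( n\in\mathbb{N} \).

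Next, for every \( n \) apply Lemma~\ref{lem:rational-perturbation} to \( \mathcal{C}_{n}^{0} \), the grid \( Q \), and \( \epsilon=1/4 \), obtaining \( \mathcal{C}_{n}\subseteq Q \) and a Borel orbit equivalence \( \phi_{n}:\mathcal{C}_{n}^{0}\to\mathcal{C}_{n} \) moving points by less than \( 1/4 \). Set \( l_{n}:=l_{n}^{0}-1 \), \( R_{n}:=[-l_{n},l_{n}]^{d} \) and \( Z:=\bigcup_{n}(\mathcal{C}_{n}+R_{n}) \). Then (v) and (i) are immediate; (iii) holds because two distinct points of \( \mathcal{C}_{n} \) differ by a vector lying within sup-distance \( 1/2 \) of a vector outside \( [-2l_{n}^{0},2l_{n}^{0}]^{d} \), hence outside \( [-2l_{n},2l_{n}]^{d} \) since \( 2l_{n}^{0}-\tfrac12>2l_{n} \). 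For (ii) and (iv) the point is that shifting \( \mathcal{C}_{n}^{0} \) by less than \( 1/4 \) keeps the filled set \( \mathcal{C}_{n}^{0}+(\text{box}) \) inside its \( 1/4 \)-neighbourhood, and with the \( R^{\la 3} \)-slack this yields \( \mathcal{C}_{n}+R_{n}\subseteq\mathcal{C}_{n}^{0}+(R_{n}^{0})^{\la 3/4}\subseteq\mathcal{C}_{n+1}^{0}+(R_{n+1}^{0})^{\la 3}\subseteq\mathcal{C}_{n+1}+(R_{n+1}^{0})^{\la 11/4}\subseteq\mathcal{C}_{n+1}+R_{n+1}^{\la 1} \), which is (iv); the same chain, together with the trivial \( \mathcal{C}_{n}+R_{n}\subseteq\mathcal{C}_{n}^{0}+R_{n}^{0}\subseteq Z_{0} \), shows \( Z=Z_{0} \), so \( Z \) is invariant, Borel and cocompressible, giving (ii).

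The only genuinely delicate part is the margin bookkeeping just described: one has to check that the \( 1/4 \)-errors produced by Lemma~\ref{lem:rational-perturbation} are comfortably swallowed by the slack the statement leaves free. This is exactly the phenomenon recorded in Lemma~\ref{lem:small-perturbations-are-time-change-equivalent} --- a small shift of a cross section is negligible --- which is why there is no essential obstacle. Alternatively, one may skip the subsequence trick and instead feed the grid-perturbation of Lemma~\ref{lem:rational-perturbation} directly into the construction underlying \cite[Theorem~6.3]{2015arXiv150400958S}, at each step where a cross section is chosen, with Lemma~\ref{lem:small-perturbations-are-time-change-equivalent} guaranteeing the insertion disturbs nothing.
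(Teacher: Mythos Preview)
Your proposal is correct and follows precisely the approach the paper indicates: invoke \cite[Theorem~6.3]{2015arXiv150400958S} for the gridless version and then slide each \( \mathcal{C}_{n} \) onto \( Q \) via Lemma~\ref{lem:rational-perturbation}, absorbing the displacement into the built-in slack of condition~(iv). The paper itself does not spell out any of the margin bookkeeping---it simply declares the modification ``straightforward in view of Lemma~\ref{lem:small-perturbations-are-time-change-equivalent}''---so your subsequence-and-shrink accounting is exactly the kind of detail that sentence is gesturing at.
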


For any flow \( \mathbb{R}^{d} \acts X \), we let \( \mathcal{E}(X) \) to denote the Borel space of
invariant ergodic probability measures on~\( X \).  The construction of the time change equivalence
given in Section \ref{sec:back-forth-constr} would be easier if performed relative to a fixed
ergodic measure on \( X \).  To make it work generally, we make use of the following classical
ergodic decomposition theorem due to Varadarajan.

\begin{lemma}[Ergodic Decomposition]
  \label{lem:ergodic-decomposition}
  For any free Borel flow \( \mathbb{R}^{d} \acts X \) with \( \mathcal{E}(X) \ne \es \) there
  exists a Borel surjection \( x \mapsto \mu_{x} \) from \( X \) onto  \( \mathcal{E}(X) \)
  such that
  \begin{enumerate}[(i)]
  \item \( x \cber*[X] y \implies \mu_{x} = \mu_{y} \);
  \item \( \nu \bigl(\bigl\{ x : \mu_{x} = \nu \bigr\}\bigr) = 1 \) for any \( \nu \in \mathcal{E}(X) \).
  \end{enumerate}
\end{lemma}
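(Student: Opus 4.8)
The plan is to prove the Ergodic Decomposition Lemma as stated by reducing to the classical Varadarajan ergodic decomposition theorem for Borel actions of locally compact second countable groups, and then upgrading it to a genuinely everywhere-defined, orbit-invariant assignment $x \mapsto \mu_x$. Recall that Varadarajan's theorem (see, e.g., \cite{MR1425877}) provides, for the Borel action $\mathbb{R}^d \acts X$, a Borel map $\pi : X \to \mathcal{E}(X)$ together with a co-null invariant Borel set on which $\pi$ is invariant and satisfies the conditional-measure property $\nu(\{x : \pi(x) = \nu\}) = 1$ for each $\nu \in \mathcal{E}(X)$. Since we have assumed $\mathcal{E}(X) \ne \es$, this map is nontrivial. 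The issue is that the classical statement only guarantees the desired properties on an invariant Borel set $X_0 \subseteq X$ with $\mu(X_0) = 1$ for every invariant probability measure $\mu$ on $X$; off of $X_0$ the map need not be invariant. We must patch this.

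First I would invoke Varadarajan's theorem to obtain the Borel map $\pi : X \to \mathcal{E}(X)$ and the invariant Borel set $X_0$ with the stated properties on $X_0$, where $X \setminus X_0$ is compressible (carries no invariant probability measure). Next, on the compressible part $X \setminus X_0$ I need to define $\mu_x$ in an invariant Borel fashion with values in $\mathcal{E}(X)$. The simplest way: fix once and for all a Borel linear ordering $\prec$ on $X$ and, using that the orbit equivalence relation $\cber[X]$ restricted to a lacunary cross section is hyperfinite (as in the proof of Lemma~\ref{lem:existence-of-rational-grids}), produce a Borel selector $s : X \setminus X_0 \to X \setminus X_0$ picking one point per orbit — but orbits of an $\mathbb{R}^d$-flow are uncountable, so instead pass to a cross section: take a cross section $\mathcal{C} \subseteq X$, let $\mathcal{C}' = \mathcal{C} \cap (X \setminus X_0)$, and use hyperfiniteness of $\cber[\mathcal{C}']$ to get a Borel transversal-type selector $s : X \setminus X_0 \to \mathcal{C}'$ that is constant on orbits. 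Then pick any Borel injection $\iota : \mathcal{C}' \to \mathcal{E}(X)$ — or, more cheaply, note we only need \emph{some} Borel orbit-invariant assignment, and since property (ii) is vacuous on the compressible part (no $\nu$ concentrates there), we may simply set $\mu_x := \iota(s(x))$ for an arbitrary fixed Borel map $\iota : \mathcal{C}' \to \mathcal{E}(X)$; here we use $\mathcal{E}(X) \ne \es$ to have a target, composing with a constant if necessary. Finally define $x \mapsto \mu_x$ to be $\pi(x)$ on $X_0$ and $\iota(s(x))$ on $X \setminus X_0$.

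With this definition, property (i) holds: on $X_0$ it is the invariance of $\pi$ from Varadarajan's theorem, on $X \setminus X_0$ it is invariance of the selector $s$, and since both $X_0$ and its complement are invariant, no orbit straddles the two pieces. Property (ii) holds: for $\nu \in \mathcal{E}(X)$, since $\nu$ is an invariant probability measure we have $\nu(X \setminus X_0) = 0$, so $\nu(\{x : \mu_x = \nu\}) = \nu(\{x \in X_0 : \pi(x) = \nu\}) = 1$ by Varadarajan. Surjectivity onto $\mathcal{E}(X)$ likewise follows from surjectivity of $\pi$ restricted to $X_0$, which is part of the classical statement (every ergodic measure is realized since it concentrates on $X_0$). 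Borel measurability is clear as we have glued two Borel maps along a Borel invariant set.

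\textbf{The main obstacle} is not any single hard estimate but rather assembling the correct form of the classical theorem: one must cite a version of Varadarajan's ergodic decomposition that applies to Borel $\mathbb{R}^d$-actions on standard Borel spaces (not just to single transformations or to continuous actions on compact spaces), and one must be careful that the ``conditional measure'' property is stated as concentration on the fiber, which is exactly what is needed. The patching on the compressible part is routine once one recalls that lacunary cross sections of $\mathbb{R}^d$-flows have hyperfinite induced equivalence relations and hence admit Borel selectors; the only subtlety is the harmless observation that on a compressible set property (ii) imposes no constraint, so any Borel orbit-invariant $\mathcal{E}(X)$-valued function will do.
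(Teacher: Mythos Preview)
The paper does not prove this lemma at all: it is stated as the classical ergodic decomposition theorem due to Varadarajan and simply invoked without argument. Your reduction to Varadarajan's theorem is therefore exactly in the spirit of the paper, and your additional work of upgrading the almost-everywhere statement to an everywhere-defined, orbit-invariant assignment is a reasonable elaboration that the paper omits.

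There is, however, a genuine slip in your patching on the compressible part. You write that hyperfiniteness of \( \cber[\mathcal{C}'] \) yields ``a Borel transversal-type selector \( s \) that is constant on orbits.'' It does not: hyperfiniteness is strictly weaker than smoothness, and a countable Borel equivalence relation admits a Borel selector (equivalently, a Borel transversal) if and only if it is smooth. The relation \( E_{0} \), for instance, is hyperfinite and compressible yet has no Borel transversal, so compressibility of \( X \setminus X_{0} \) does not help here either. Fortunately this detour is unnecessary. As you yourself hint with ``composing with a constant if necessary,'' one may simply fix any \( \nu_{0} \in \mathcal{E}(X) \) (nonempty by hypothesis) and set \( \mu_{x} = \nu_{0} \) for every \( x \in X \setminus X_{0} \). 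This is trivially Borel and orbit-invariant; condition (ii) is unaffected since every \( \nu \in \mathcal{E}(X) \) satisfies \( \nu(X \setminus X_{0}) = 0 \); and surjectivity is already secured on \( X_{0} \) by the argument you give. With this one-line correction the proof goes through.
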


The following extension lemma will be used routinely through the back-and-forth construction.  It is
taken directly from \cite[Proposition 2.6]{MR1113569}.

\begin{lemma}[Extension Lemma]
  \label{lem:extension-lemma}
  Let \( R, R' \) and \(D_{i}, D_{i}' \subseteq \mathbb{R}^{d} \), \( 1 \le i \le n \), be smooth
  disks such that \( D_{i} \subseteq R \) and \( D_{i}' \subseteq R' \).  Any family of orientation preserving
  diffeomorphisms \( f_{i} : D_{i} \to D_{i}' \) admits a common extension to an orientation
  preserving diffeomorphism \( f : R \to R' \).  
\end{lemma}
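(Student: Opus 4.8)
The plan is to reduce the whole statement to a single application of the isotopy extension theorem. Throughout I use the conventions, implicit in the statement, that \( D_{1},\dots,D_{n} \) are pairwise disjoint and contained in the interior of \( R \), and likewise that \( D_{1}',\dots,D_{n}' \) are pairwise disjoint and contained in the interior of \( R' \) (without disjointness a common \emph{bijective} extension cannot exist). Since \( R \) and \( R' \) are both smooth disks, there is an orientation preserving diffeomorphism \( g:R'\to R \); replacing \( D_{i}' \) by \( g(D_{i}') \) and \( f_{i} \) by \( g\circ f_{i} \), and composing the final answer with \( g^{-1} \) at the end, I may assume \( R'=R \). I also assume \( d\ge 2 \), which is the only case needed below; the case \( d=1 \) is commented on at the end.

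Next, for each \( i \) fix an orientation preserving smooth embedding \( \phi_{i}:D^{d}\to\operatorname{int}R \) with image \( D_{i} \). Then \( \Phi:=\bigsqcup_{i}\phi_{i} \) and \( \Psi:=\bigsqcup_{i}(f_{i}\circ\phi_{i}) \) are two smooth embeddings of \( \bigsqcup_{i=1}^{n}D^{d} \) into \( \operatorname{int}R \), orientation preserving on each component, with \( \Phi \) having images \( D_{i} \) and \( \Psi \) having images \( D_{i}' \). It suffices to produce an ambient isotopy \( (G_{t})_{t\in[0,1]} \) of \( R \) with \( G_{0}=\operatorname{id} \), each \( G_{t} \) the identity near \( \partial R \), and \( G_{1}\circ\Phi=\Psi \): then \( G_{1} \) is orientation preserving (being isotopic to \( \operatorname{id} \)) and \( G_{1}|_{D_{i}}=f_{i} \) for all \( i \), so \( f:=G_{1} \) (undoing the reduction, \( f:=g^{-1}\circ G_{1} \)) is the desired diffeomorphism. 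By the isotopy extension theorem, such a \( (G_{t}) \) exists as soon as there is a path from \( \Phi \) to \( \Psi \) in the space of orientation preserving smooth embeddings \( \bigsqcup_{i=1}^{n}D^{d}\to\operatorname{int}R \).

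So the heart of the matter is path connectedness of that embedding space. Here one uses that \( \operatorname{int}R \) is diffeomorphic to \( \mathbb{R}^{d} \); that any finite configuration of disjoint smoothly embedded \( d \)-disks may be shrunk, via the radial isotopies \( \phi_{i}(t\,\cdot\,) \) with \( t\in(0,1] \) (whose images stay inside the original disks and hence remain pairwise disjoint), into pairwise disjoint round balls on which the embedding is, after a further small isotopy, affine; and that the space of such standard configurations is path connected because the ordered configuration space of \( n \) points in \( \mathbb{R}^{d} \) is path connected for \( d\ge 2 \) and \( GL^{+}(d,\mathbb{R}) \) (controlling the framings) is connected. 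Chaining these isotopies yields the required path, and the lemma follows. The main obstacle is not conceptual — no new idea beyond the isotopy extension theorem is needed — but lies in assembling these standard facts from differential topology and in keeping the orientation bookkeeping straight, so that every intermediate embedding, and the terminal diffeomorphism \( G_{1} \), is orientation preserving. Finally, for \( d=1 \) the configuration space of points is disconnected and the argument genuinely fails (indeed the statement is then false unless the \( D_{i}' \) appear in the same left-to-right order as the \( D_{i} \)); under that extra hypothesis the one-dimensional case is elementary, extending \( \bigsqcup_{i}f_{i} \) over the complementary intervals by monotone diffeomorphisms matching the prescribed boundary jets and smoothing at the endpoints.
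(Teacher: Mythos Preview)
The paper does not give its own proof of this lemma; it simply cites Feldman \cite[Proposition 2.6]{MR1113569} and moves on. So there is nothing in the paper to compare against beyond the bare statement.

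Your argument is correct and is the standard route: reduce to \( R'=R \), then invoke the isotopy extension theorem, with the substantive step being path-connectedness of the space of orientation preserving embeddings \( \bigsqcup_{i=1}^{n} D^{d}\hookrightarrow\operatorname{int}R \). Your sketch of that connectedness (shrink each disk radially to a small affine ball, move centers through the configuration space of \( n \) points in \( \mathbb{R}^{d} \), adjust framings through \( GL^{+}(d,\mathbb{R}) \)) is exactly how one proves it, and the orientation bookkeeping is handled by the fact that \( G_{1} \) is isotopic to the identity. The implicit hypotheses you make explicit --- pairwise disjointness of the \( D_{i} \) and of the \( D_{i}' \), and containment in the interiors --- are indeed how the lemma is applied in Section~\ref{sec:back-forth-constr}, where disjoint boxes are mapped to disjoint smooth disks inside a larger box. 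Your caveat about \( d=1 \) is also correct and harmless, since the paper only uses the lemma for \( d\ge 2 \).
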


Finally, we shall need the following easy fact from the theory of countable Borel equivalence relations.

\begin{lemma}
  \label{lem:smooth-partition}
  Let \( \cber, \fber \) be finite Borel equivalence relation on a standard Borel space \( X \).
  Suppose that \( \fber \subseteq \cber \).  There is a sequence of disjoint Borel sets \( A_{n}
  \subseteq X \) such that
  \begin{enumerate}
  \item \( X = \bigsqcup_{n} A_{n} \);
  \item each \( A_{n} \) is \( \fber \)-invariant;
  \item \( [x]_{\cber} \cap A_{n} = [x]_{\fber} \) for all \( x \in A_{n} \).
  \end{enumerate}
  If there is a bound on \( [\fber:\cber] \) --- the number of \( \fber \)-classes in a \( \cber
  \)-class --- then the sequence \( (A_{n})_{n} \) can be taken to be finite.
\end{lemma}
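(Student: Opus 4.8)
The plan is to manufacture a single Borel function and then obtain the sets $A_n$ as its level sets. Precisely, I will construct a Borel map $c : X \to \mathbb{N}$ that is $\fber$-invariant, in the sense that $x \fber* y$ implies $c(x) = c(y)$, and that separates $\fber$-classes inside each $\cber$-class, in the sense that $x \cber* y$ together with $c(x) = c(y)$ implies $x \fber* y$. Once such a $c$ is available, I set $A_n := c^{-1}(n)$ (discarding the empty ones if desired). These sets are Borel, pairwise disjoint, and exhaust $X$; each $A_n$ is $\fber$-invariant by the first property; and for $x \in A_n$ one has $[x]_{\fber} \subseteq A_n$, while every $y \in [x]_{\cber} \cap A_n$ satisfies $x \cber* y$ and $c(x) = c(y) = n$, hence $x \fber* y$ by the second property. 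Therefore $[x]_{\cber} \cap A_n = [x]_{\fber}$, which is exactly what is required.

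To build $c$, I would first fix a Borel linear ordering $\prec$ on $X$. Since the classes of $\fber$ are finite, each has a $\prec$-least element, so the selector $s(x) := \min_{\prec}[x]_{\fber}$ is Borel and $\fber$-invariant, and $s(s(x)) = s(x)$. I then define
\[ c(x) := \#\bigl\{\, z \in X : z \cber* x,\ z = s(z),\ z \prec s(x)\,\bigr\}, \]
that is, the rank of the $\fber$-class of $x$ among the finitely many $\fber$-classes contained in $[x]_{\cber}$, ordered by their $\prec$-least elements. The set $P := \{(x,z) : z \cber* x,\ z = s(z),\ z \prec s(x)\}$ is Borel with finite vertical sections $P_x \subseteq [x]_{\cber}$, so by the Luzin--Novikov theorem (\cite[18.14]{MR1321597}) the counting function $x \mapsto \#P_x = c(x)$ is Borel.

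Verifying the two properties is then routine. If $x \fber* y$, then $[x]_{\cber} = [y]_{\cber}$ and $s(x) = s(y)$, so the sets defining $c(x)$ and $c(y)$ coincide. If $x \cber* y$, then $R := \{z : z \cber* x,\ z = s(z)\} = \{z : z \cber* y,\ z = s(z)\}$ is a finite $\prec$-linearly ordered set containing both $s(x)$ and $s(y)$; since $c(x)$ and $c(y)$ count the elements of $R$ lying strictly $\prec$-below $s(x)$, respectively $s(y)$, the equality $c(x) = c(y)$ forces $s(x) = s(y)$, i.e. $x \fber* y$. Finally, if the number of $\fber$-classes in each $\cber$-class is bounded by some $N$, then $\# R \le N$ for every $x$, so $c(x) \in \{0,1,\dots,N-1\}$ and only $A_0, \dots, A_{N-1}$ are non-empty. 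The one step that deserves attention — though it is entirely standard — is the Borel measurability of the cardinality function $x \mapsto \#P_x$, which is precisely where Luzin--Novikov enters; this is the only non-bookkeeping ingredient of the argument.
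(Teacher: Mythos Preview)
Your argument is correct. Both your proof and the paper's rest on the same idea --- enumerate the finitely many $\fber$-classes inside each $\cber$-class in a Borel way and let $A_n$ collect the $n$th one --- but the implementations differ. The paper does this recursively: pick a Borel transversal $B_0$ for $\cber$ (available since finite equivalence relations are smooth), set $A_0 = [B_0]_{\fber}$, and repeat on the complement. You instead fix a Borel linear order, take the $\fber$-selector $s$, and define the rank $c(x)$ of $[x]_{\fber}$ among the $\fber$-selectors in $[x]_{\cber}$ all at once, appealing to Luzin--Novikov for the Borelness of the count. If one chooses the paper's transversals via $\prec$-minima, the two constructions produce literally the same sets $A_n$; your version is a bit more explicit, while the paper's three-line recursion avoids the Luzin--Novikov citation by leaning on smoothness directly.
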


\begin{proof}
  Since \( \cber \) is smooth, it admits a Borel transversal \( B_{0} \subseteq X \).  Set \( A_{0}
  = [B_{0}]_{\fber} \), and set recursively \( A_{n} = [B_{n}]_{\fber} \), where \( B_{n} \)
  is a Borel transversal for \( \cber \) restricted onto \( X \setminus \bigcup_{k< n} A_{k} \). 
\end{proof}

\section{Back and forth construction}
\label{sec:back-forth-constr}

For the proof of the following theorem it is convenient to introduce the notion of a tree of
partitions.  Let \( \mathbb{R}^{d} \acts X \) be a free flow on \( X \).  A \emph{tree of
  partitions} for the flow is a family of invariant Borel sets
\( (\Omega_{s})_{s \in \mathbb{N}^{<\mathbb{N}}} \), \( \Omega_{s} \subseteq X \), indexed by finite
sequences of natural numbers that satisfies the following two conditions:
\begin{enumerate}
\item \( X = \bigsqcup_{s \in \mathbb{N}^{n}} \Omega_{s} \) for each \( n \in \mathbb{N} \); in
  particular, \( \Omega_{\es} = X \).
\item \( s \subseteq t \implies \Omega_{t} \subseteq \Omega_{s} \). 
\end{enumerate}

\begin{theorem}
  \label{thm:tce-up-to-cocompressible}
  Let \( \mathbb{R}^{d} \acts X_{1} \) and \( \mathbb{R}^{d} \acts X_{2} \) be free Borel flows on
  standard Borel spaces, let for \( i=1,2 \), \( Q_{i} \subseteq X_{i} \) be rational grids, let
  \( \mathcal{D}_{i} \subseteq Q_{i} \) be cocompact cross sections on these grids, and let
  \( \zeta : \mathcal{D}_{1} \to \mathcal{D}_{2} \) be an orbit equivalence between them.  There are
  Borel invariant cocompressible sets \( Z_{i} \subseteq X_{i} \), and a time change equivalence
  \( \psi : Z_{1} \to Z_{2} \) that extends \( \zeta \), i.e., 
  \( \zeta|_{Z_{1} \cap \mathcal{D}_{1}} = \psi|_{Z_{1} \cap \mathcal{D}_{1}} \).
\end{theorem}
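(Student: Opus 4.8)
The global content of the problem has already been fixed by \( \zeta \): it induces a bijection between the orbits of the two flows, and the only task left is to interpolate, in a Borel way and uniformly over orbits, a homeomorphism of \( \mathbb{R}^{d} \) between the two uniformly discrete, relatively dense sets \( \mathcal{D}_{1}\cap O \) and \( \mathcal{D}_{2}\cap\zeta(O) \) sitting inside each orbit \( O \). First I would pass to a convenient invariant set. Applying Lemma~\ref{lem:uniform-rokhlin} to \( \mathbb{R}^{d}\acts X_{i} \) with the grid \( Q_{i} \) produces a cocompressible invariant \( Z_{i} \), cross sections \( \mathcal{C}_{n}^{i}\subseteq Q_{i} \), and rectangles \( R_{n}^{i} \) enjoying the nesting and tiling properties stated there. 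Since \( \zeta \) is an isomorphism between the countable Borel equivalence relations that the two flows induce on \( \mathcal{D}_{1} \) and \( \mathcal{D}_{2} \), it transports compressibility; and compressibility of a flow-invariant set is equivalent to compressibility of the equivalence relation it induces on a cocompact cross section. Hence, after replacing \( Z_{1} \) by the union of those of its orbits whose \( \zeta \)-image lies in \( Z_{2} \) and shrinking \( Z_{2} \) symmetrically, we may assume that \( Z_{1} \) and \( Z_{2} \) are cocompressible, that \( \zeta \) restricts to a bijection \( \mathcal{D}_{1}\cap Z_{1}\to\mathcal{D}_{2}\cap Z_{2} \), and that the Rokhlin towers restrict to towers inside the \( Z_{i} \). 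From now on everything takes place inside \( Z_{1} \) and \( Z_{2} \).

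The construction is a back-and-forth carried out along a tree of partitions \( (\Omega_{s})_{s\in\mathbb{N}^{<\mathbb{N}}} \) of \( Z_{1} \), together with the mirror family of \( Z_{2} \). Fix in each orbit \( O\subseteq Z_{1} \) a base point \( b_{1}\in\mathcal{D}_{1}\cap O \), put \( b_{2}=\zeta(b_{1}) \), and use the cocycle \( \rho \) to identify \( O \) and the orbit of \( b_{2} \) with \( \mathbb{R}^{d} \). In these coordinates, the object to be built at a node \( s\in\mathbb{N}^{n} \) is, for every orbit \( O\in\Omega_{s} \), an orientation preserving diffeomorphism \( g_{s}:P_{s}(O)\to P_{s}'(O) \) between concentric rectangles, subject to: \( g_{s} \) extends \( g_{t} \) whenever \( t\subseteq s \); \( g_{s} \) sends \( \rho(b_{1},d) \) to \( \rho(b_{2},\zeta(d)) \) for every \( d\in\mathcal{D}_{1} \) inside the domain rectangle; and the domain and range rectangles exhaust \( \mathbb{R}^{d} \) as \( n\to\infty \). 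The parity of \( n \) dictates which side we enlarge at step \( n \), while the countable branching of the tree encodes \emph{by how much} the rectangles are enlarged — crucially, \( \zeta \) carries no a priori bound on the displacement \( \bigl\|\rho(b_{2},\zeta(d))-\rho(b_{1},d)\bigr\| \), so the enlargement needed to absorb the next cross-section constraint is not uniform over orbits, and splitting each \( \Omega_{t} \) at level \( n \) into the appropriate pieces at level \( n+1 \) is precisely what sorts orbits by this size. At an \( X_{1} \)-step we enlarge the domain rectangle so as to contain the next point of \( \mathcal{D}_{1}\cap O \) in a Borel enumeration of this countable set, and then choose the range rectangle large enough to contain the \( \zeta \)-images of all cross-section points captured so far, enclosing simultaneously the appropriate level \( \mathcal{C}_{m}^{2}+R_{m}^{2} \) of the Rokhlin tower; an \( X_{2} \)-step is the symmetric operation. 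Cocompactness of the \( \mathcal{D}_{i} \) is what guarantees that this alternation really does exhaust both \( Z_{1} \) and \( Z_{2} \).

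The one substantive point is the extension step, and there Feldman's Lemma~\ref{lem:extension-lemma} does the job. Passing from \( g_{t} \) at a node \( t \) to \( g_{s} \) at a child \( s \) amounts, on each orbit, to producing an orientation preserving diffeomorphism of the new (larger) domain rectangle onto the new range rectangle that restricts to \( g_{t} \) on the old domain rectangle — a smooth disk inside the new one — and, on a small disk about each newly captured \( d\in\mathcal{D}_{1} \), to a fixed orientation preserving diffeomorphism carrying \( d \) to \( \zeta(d) \); these finitely many disks and their images are pairwise disjoint and lie inside the respective rectangles, so Lemma~\ref{lem:extension-lemma} applies. The common extension must be chosen Borel-uniformly in the orbit: as with the diffeomorphisms in Lemma~\ref{lem:small-perturbations-are-time-change-equivalent}, the construction behind the Extension Lemma is explicit enough to be performed by a Borel map of the relevant parameters (alternatively, a Luzin--Novikov selection over the standard Borel space of admissible diffeomorphisms suffices). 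The grids \( Q_{i} \) enter here: all the combinatorial data — which grid points are cross-section points, the values of \( \rho \) and of \( \zeta \) on them, the corners of the rectangles — then vary Borel-ly and take countably many values, so the tree of partitions and the maps \( g_{s} \) can indeed be defined by recursion on \( n \); the ergodic decomposition (Lemma~\ref{lem:ergodic-decomposition}) can be brought in if one prefers to run the size estimates one ergodic component at a time.

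Finally, for an orbit \( O \) let \( (s_{n})_{n} \) be the unique branch with \( O\in\Omega_{s_{n}} \) for all \( n \), and set \( \psi|_{O}=\lim_{n}g_{s_{n}} \). On each orbit this is an increasing union of diffeomorphisms between rectangles exhausting \( \mathbb{R}^{d} \) on both sides, hence a homeomorphism \( \mathbb{R}^{d}\to\mathbb{R}^{d} \), which in the cocycle coordinates means a time change equivalence of \( O \) onto the orbit of \( \zeta(b_{1}) \). Assembling these over all orbits yields a Borel bijection \( \psi:Z_{1}\to Z_{2} \) — the orbit correspondence coming from \( \zeta \), orbit-wise bijectivity from the back-and-forth — which is an orbit equivalence, is a time change equivalence by construction, and agrees with \( \zeta \) on \( \mathcal{D}_{1}\cap Z_{1} \) because every \( g_{s} \) honours the cross-section constraints. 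I expect the main obstacle to lie exactly in this last, organizational part: arranging the back-and-forth so that the unbounded enlargements forced by \( \zeta \) are absorbed into the countable branching of the tree of partitions, while every diffeomorphism extension remains Borel in the orbit.
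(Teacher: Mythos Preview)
Your high-level strategy --- back-and-forth along a tree of Borel partitions, Lemma~\ref{lem:extension-lemma} for the extension step, the rational grids to cut the configuration space down to something countable --- is the paper's. The gap is the mechanism. Fixing a base point \( b_{1}\in\mathcal{D}_{1}\cap O \) in every orbit \( O \) in a Borel way is exactly choosing a Borel transversal for the restriction of the orbit equivalence relation to \( \mathcal{D}_{1} \); for non-smooth flows (the only case of interest) no such transversal exists. Without it the picture of \emph{concentric} rectangles \( P_{s}(O) \) exhausting \( \mathbb{R}^{d} \) has no meaning --- there is no canonical centre --- and the ``enlargement amount'' you want the branching of the tree to record depends on which centre one uses, so \( (\Omega_{s}) \) cannot be defined as you describe.

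The paper never fixes an origin. It works with the Rokhlin boxes \( c+R_{i,\omega_{i,s}} \), whose centres \( c\in\mathcal{C}_{i,\omega_{i,s}} \) change with the level and are many per orbit. At an odd stage \( \psi_{n} \) is defined not on one rectangle per orbit but on the union of those \( R_{1,\omega_{1,s}} \)-boxes all of whose \( \mathcal{D}_{1} \)-points land, under \( \zeta \), inside a \emph{single} \( R_{2,\omega_{2,s}} \)-box; this is the set \( \mathcal{V}_{1,s}+R_{1,\omega_{1,s}} \). Two issues then arise that your outline does not cover. First, several \( R_{1} \)-boxes can target the same \( R_{2} \)-box, so one must partition them via Lemma~\ref{lem:smooth-partition} and choose pairwise disjoint image disks in order to keep \( \psi_{n} \) injective. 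Second --- and this is the real replacement for the missing transversal --- not every box is ``good'' at every level, so the limit \( \psi \) is defined only on the liminf \( Z_{1}=\bigcup_{m}\bigcap_{|s|\ge m}\bigl(\mathcal{V}_{1,s}+R_{1,\omega_{1,s}}\bigr) \). Showing this set is cocompressible is precisely where the ergodic decomposition enters: for each \( \mu_{x} \) one chooses \( \omega_{2,s} \) large enough that the complement of the good set has \( \mu_{x} \)-measure at most \( 2^{-|s|} \), and Borel--Cantelli finishes. That step is not optional; it is what the argument trades for the base point you cannot have.
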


\begin{proof}
  The proof relies on a back-and-forth argument similar to the one used in the proof of Theorem 1 in
  \cite{MR1113569}.
  For start, let us apply the Uniform Rokhlin Lemma (Lemma \ref{lem:uniform-rokhlin}) to both flows
  yielding Borel invariant cocompressible sets \( \tilde{Z}_{i} \subseteq X_{i} \), as well as cross
  sections \( \mathcal{C}_{i,n} \subseteq \tilde{Z}_{i} \) , \( n \in \mathbb{N} \), and rationals
  \( l_{i,n} \in \mathbb{Q} \) such that for the squares
  \( R_{i,n} = [-l_{i,n}, l_{i,n}]^{d} \) , \( i=1,2 \), one has
  \begin{enumerate}
  \item \label{item:on-rational-grid} cross sections \( \mathcal{C}_{i,n} \subseteq Q_{i} \) 
   are on the rational grids;
 \item \label{item:exhaust-orbits}
   \( \tilde{Z}_{i} = \bigcup_{n} \bigl( \mathcal{C}_{i,n} + R_{i,n}\bigr) \);
 \item boxes \( c + R_{i,n} \) are pairwise disjoint;
  \item \label{item:nested} \( \mathcal{C}_{i,n} + R_{i,n} \subseteq \mathcal{C}_{i,n+1} +
    R_{i,n+1}^{\la 1} \);
  \item \label{item:sizes-increase} sequences \( (l_{i,n})_{n \in \mathbb{N}} \) are increasing and
    unbounded.
  \end{enumerate}
  Since cross sections \( \mathcal{D}_{i} \) are cocompact, we may omit, if necessary, finitely many
  cross sections \( \mathcal{C}_{i,n} \)  and assume without loss of generality that \( l_{i,n} \) are so
  large that \( \mathcal{D}_{i} \cap (c + R_{i,n}) \ne \es \) for all \( c \in \mathcal{C}_{i,n} \).
  We shall further decrease sets \( \tilde{Z}_{i} \) by throwing away invariant compressible
  sets, so for notational convenience we assume that \( \tilde{Z}_{i} = X_{i} \).

  For each of \( X_{i} \) we pick an ergodic decomposition \( x \mapsto \mu_{x} \) as in
  Lemma \ref{lem:ergodic-decomposition}.  We also let \( \fber[n][i] \) to denote finite Borel
  equivalence relations on \( \mathcal{D}_{i} \cap \bigl(\mathcal{C}_{i,n} + R_{i,n} \bigr) \) given by
  \[ x \fber*[n][i] y \iff x, y \in (c + R_{i,n}) \quad \textrm{for some } c \in
    \mathcal{C}_{i,n}. \]
  Each \( \fber*[n][i] \) class lives in a unique \( R_{i,n} \) box.  
  We are going to construct trees of Borel partitions
  \( (\Omega_{i,s})_{s \in \mathbb{N}^{<\mathbb{N}}} \) on \( X_{i} \), together with families of positive
  integers \( (\omega_{i,s})_{s \in \mathbb{N}^{<\mathbb{N}}} \).  Before listing properties of
  these objects, let us introduce the following sets:
  \begin{displaymath}
    \begin{aligned}
      \mathcal{V}_{1,s} = \bigl\{ &x \in \mathcal{C}_{1, \omega_{1,s}} : \textrm{for all }
      y_{1},y_{2} \in \mathcal{D}_{1} \cap \bigl( x + R_{1,\omega_{1,s}}\bigr)
      \\ &\textrm{one has } \zeta(y_{1}) {\fber*[\omega_{2,s}][2]} \zeta(y_{2}) \bigr\},\\
      \mathcal{V}_{2,s} = \bigl\{ &x \in \mathcal{C}_{2, \omega_{2,s}} : \textrm{for all }
      y_{1},y_{2} \in \mathcal{D}_{2} \cap \bigl( x + R_{2,\omega_{2,s}}\bigr)
      \\ &\textrm{one has } \zeta^{-1}(y_{1}) {\fber*[\omega_{1,s}][1]} \zeta^{-1}(y_{2}) \bigr\}.\\
    \end{aligned}
  \end{displaymath}
  Figure \ref{fig:sets-VXn} illustrates the definition of the set \( \mathcal{V}_{1,s} \): a point
  \( x \in \mathcal{C}_{1,\omega_{1,s}}\) belongs to \( \mathcal{V}_{1,s} \) if the images under
  \( \zeta \) of all the points of \( \mathcal{D}_{1} \) in the box \( R_{1,\omega_{1,s}} \) around
  \( x \) fall into a single box \( R_{2,\omega_{2,s}} \) in \( X_{2} \).  The definition of
  \( \mathcal{V}_{2,s} \) uses \( \zeta^{-1} \) instead of \( \zeta \).  
  \begin{figure}[htb]
    \centering
    \begin{tikzpicture}
      \draw[fill=black!15!white] (-1, -1) rectangle (0.5,0.5);
      \filldraw (-0.7, -0.5) circle (1pt) node[anchor=north] {\( x_{1} \)};
      \filldraw (-0.3, -0.1) circle (1pt) node[anchor=south] {\( x_{2} \)};
      \filldraw (0.25, -0.4) circle (1pt) node[anchor=north] {\( x_{3} \)};
      \draw[fill=black!15!white] (1, -1.5) rectangle (2.5,0);
      \filldraw (1.8, -0.6) circle (1pt) node[anchor=east] {\( y_{1} \)};
      \filldraw (2.1, -1.2) circle (1pt) node[anchor=east] {\( y_{2} \)};
      \draw (0.8, 0.4) rectangle (2.3,1.9);
      \filldraw (1.3,1) circle (1pt) node[anchor=north] {\( z_{1} \)};
      \filldraw (1.9, 1.4) circle (1pt) node[anchor=north] {\( z_{2} \)};
      \filldraw (0,0.9) circle (1pt);
      \filldraw (-0.5, 1.2) circle (1pt);
      \filldraw (-0.1, 1.6) circle (1pt);
      \filldraw (-0.1, -1.3) circle (1pt);
      \draw[->] (3,0.25) -- (4.5,0.25) node[pos=0.5,anchor=south] {\( \zeta \)};
      \draw (5, -1.25) rectangle (8, 1.75);
      \filldraw (5.3, -0.9) circle (1pt) node[anchor=west] {\( \zeta(x_{1}) \)};
      \filldraw (5.5, -0.3) circle (1pt) node[anchor=west] {\( \zeta(y_{2}) \)};
      \filldraw (6.9, -0.7) circle (1pt) node[anchor=west] {\( \zeta(x_{3}) \)};
      \filldraw (6.7, 1.3) circle (1pt) node[anchor=west] {\( \zeta(x_{2}) \)};
      \filldraw (5.4, 0.6) circle (1pt) node[anchor=west] {\( \zeta(y_{1}) \)};
      \filldraw (6.5, 0.1) circle (1pt) node[anchor=west] {\( \zeta(z_{1}) \)};
      \filldraw (8.5, 0.2) circle (1pt) node[anchor=north] {\( \zeta(z_{2}) \)};
    \end{tikzpicture}
    \caption{Definition of sets \( \mathcal{V}_{i,s} \).}
    \label{fig:sets-VXn}
  \end{figure}
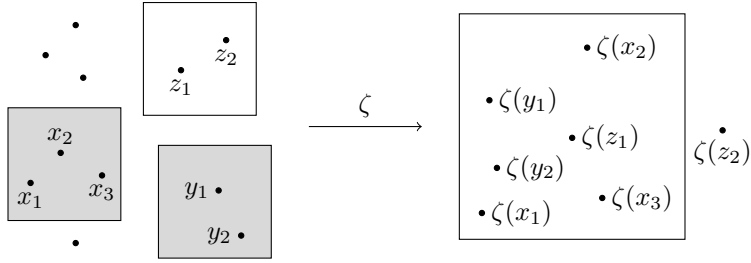  
  The role of integers \( \omega_{i,s} \) will be to ensure that sets \( \mathcal{V}_{i,s} \) are
  sufficiently large in measure.  

  We are now ready to list the conditions on the trees of Borel partitions \( \Omega_{i,s} \) and
  natural \( \omega_{i,s} \).
  \begin{enumerate}
  \item\label{item:invariant-erg-decomp} Sets \( \Omega_{i,s} \) are invariant with respect to the
    ergodic decompositions, i.e., \( \mu_{x} = \mu_{y} \) and \( x \in \Omega_{i,s} \) implies
    \( y \in \Omega_{i,s} \).
  \item\label{item:omega-grow} \( \omega_{i,s} \ge |s| \);
  \item\label{item:omega-monotone} \( s \subseteq t \) implies \( \omega_{i,s} \le \omega_{i,t} \);
  \item\label{item:large-measure} If \( |s| \) is odd, then for any \( x \in \Omega_{1,s} \) one has
    \( \mu_{x}\bigl(\mathcal{V}_{1,s} + R_{1,\omega_{1,s}}\bigr) \le 2^{-|s|} \); if \( |s| \) is even, then
    \[ \mu_{x}\bigl(\mathcal{V}_{2,s} + R_{2,\omega_{2,s}}\bigr) \le 2^{-|s|} \textrm{ for all } x
      \in \Omega_{2,s} .\]
  \end{enumerate}



  Let us first finish the argument under the assumption that such objects have been constructed.
  The base of the inductive construction is the map
  \[ \psi_{1} : \bigcup_{s \in \mathbb{N}^{1}} \bigl(\mathcal{V}_{1,s} + R_{1,\omega_{1,s}}\bigr) \to
    X_{2},\]
  which will be an orientation preserving diffeomorphism between orbits on its domain.  
  Pick some \( s \in \mathbb{N}^{1} \), and define \( \psi_{1} \) on \( \mathcal{V}_{1,s} +
  R_{1,\omega_{1,s}} \) as follows.

  \begin{figure}[htb]
    \centering
    \includegraphics{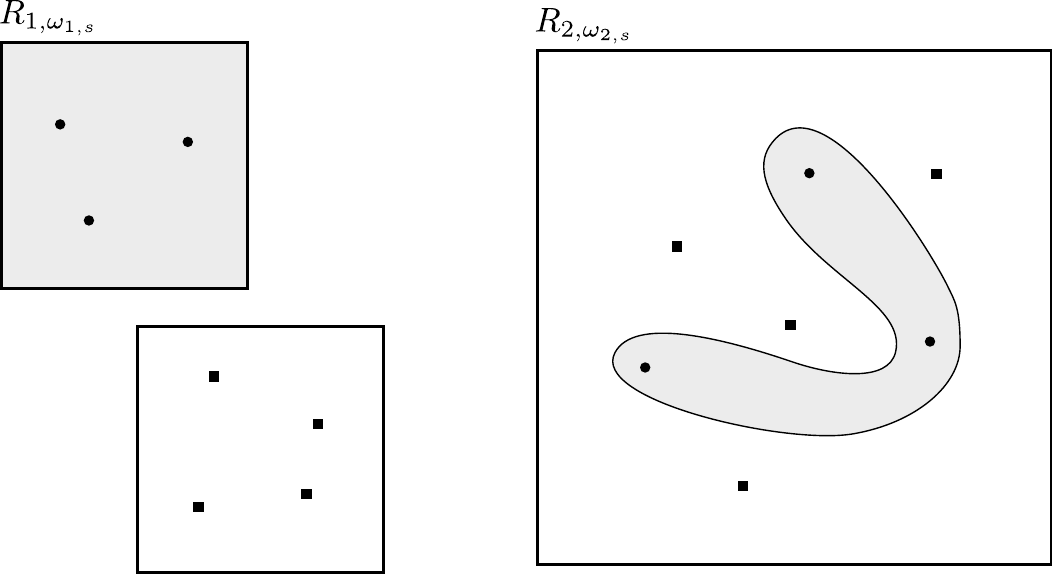}
    \caption{Extension step}
    \label{fig:extension-step-one}
  \end{figure}

  To a point \( x \in \mathcal{V}_{1,s} \) there corresponds a box \( x + R_{1,\omega_{1,s}} \),
  marked gray in Figure \ref{fig:extension-step-one}, which contains several points, say
  \( y_{1}, \ldots, y_{m} \in \mathcal{D}_{1} \).  Images of these points,
  \( \zeta(y_{1}), \ldots, \zeta(y_{m}) \), fall into a single \( z + R_{2,\omega_{2,s}} \) box,
  \( z \in \mathcal{C}_{2,\omega_{2,s}} \).  Besides points
  \( \zeta(y_{1}), \ldots, \zeta(y_{m}) \), the box \( z + R_{2,\omega_{2,s}} \) may contain other
  points of \( \mathcal{D}_{2} \).  We pick any smooth disk inside \( z + R_{2,\omega_{2,s}} \) that
  contains all the points \( \zeta(y_{1}), \ldots, \zeta(y_{m}) \) and does not contain any other points
  of \( \mathcal{D}_{2} \).  One now would like to extend the map
  \[ \zeta : \bigl\{ y_{1}, \ldots, y_{m}\bigr\} \to \bigl\{\zeta(y_{1}), \ldots,
    \zeta(y_{m})\bigr\} \]
  to an orientation preserving diffeomorphism \( \psi_{1} \) from \( x + R_{1,\omega_{1,s}} \) to
  the smooth disk around \( \zeta(y_{1}), \ldots, z(y_{m}) \).  This can be done by the Extension
  Lemma \ref{lem:extension-lemma} (the fact that \( \zeta \) is defined on points rather than disks is, of
  course, immaterial, as is the fact that \( R_{1,\omega_{1,s}} \) is not a smooth disk, since it has
  corners; to be pedantic, one extends \( \zeta \) to little balls around points in
  \( \mathcal{D}_{i} \) in a linear fashion, and considers
  \( \tilde{R}_{i,\omega_{i,s}} \subseteq R_{i,\omega_{i,s}} \) --- ``rectangles with smoothed
  corners'' instead).  The problem that arises with the use of Extension Lemma is the following one.
  The construction needs to be performed in a Borel way, meaning that extension \( \psi_{1} \) has to be defined
  for boxes \( x + R_{1,\omega_{1,s}} \) for all \( x \in \mathcal{V}_{1,s} \) at the same time,
  which can possibly lead to ``collisions'' and prevent \( \psi_{1} \) from being injective.  For
  instance, in Figure \ref{fig:extension-step-one} there are two distinct \( R_{1,\omega_{1,s}} \)
  boxes that must be mapped into a single \( R_{2,\omega_{2,s}} \) box, so we need to ensure that
  their images are disjoint.  The way to do this is to partition \( \mathcal{V}_{1,s} \) into finitely
  many Borel pieces \( \mathcal{V}_{1,s} = A_{1} \sqcup \cdots \sqcup A_{p} \) such that on each
  \( A_{j} \) every \( R_{1,\omega_{1,s}} \) box corresponds to a \emph{unique} square
  \( R_{2,\omega_{2,s}} \) via \( \zeta \).  To this end consider an equivalence relation
  \( \cber \) on \( \mathcal{D}_{1} \cap \bigl(\mathcal{V}_{1,s} + R_{1,\omega_{1,s}}\bigr) \) given
  by
  \[ x \cber* y \iff \zeta(x) \fber*[\omega_{2,s}][2] \zeta(y), \]
  and let \( \fber \) denote the restriction of \( \fber[\omega_{1,s}][1] \) onto \( \mathcal{D}_{1}
  \cap \bigl(\mathcal{V}_{1,s} + R_{1,\omega_{1,s}}\bigr)\).  By the definition of \( \mathcal{V}_{1,s} \) one
  has \( \fber \subseteq \cber \), so Lemma \ref{lem:smooth-partition} applies, and gives a
  partition 
  \[ \mathcal{D}_{1} \cap \bigl(\mathcal{V}_{1,s} + R_{1,\omega_{1,s}}\bigr) =
    \bigsqcup_{j=1}^{p}A_{j}'. \]
  The required partition of \( \mathcal{V}_{1,s} \) is obtained by setting 
  \[
    \begin{aligned}
      A_{j} = \{ &x \in \mathcal{V}_{1,s} : y \in A_{j}' \textrm{ for some (equivalently, any) } y
      \in \mathcal{D}_{1}\\ &\textrm{such that } y \in x + R_{1,\omega_{1,s}}\}.
    \end{aligned}
\]

  \begin{figure}[htb]
    \centering
    \includegraphics{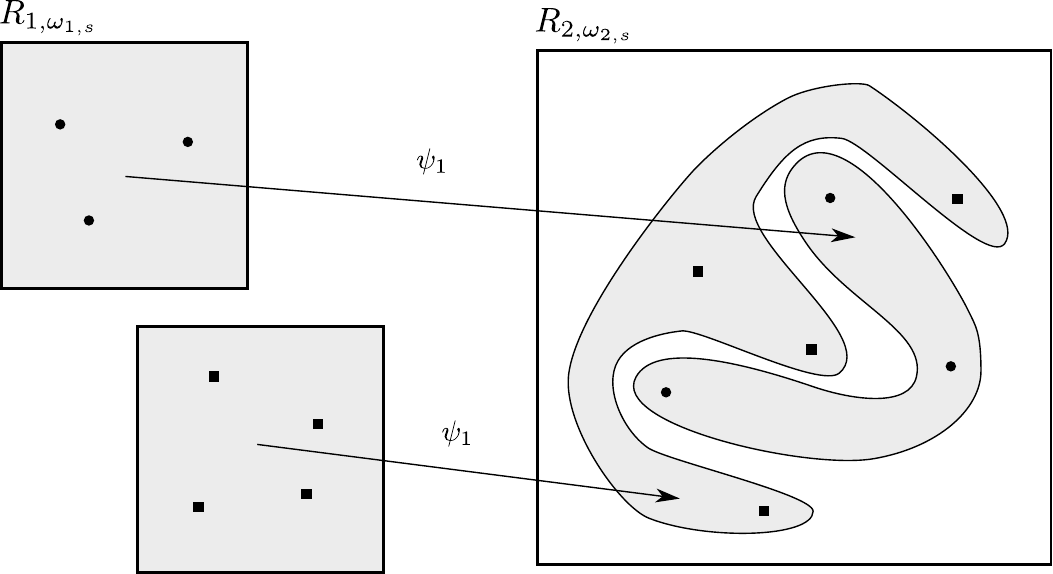}
    \caption{Extension }
    \label{fig:extension-step-two}
  \end{figure}

  We can now define the extension \( \psi_{1} \) with domain
  \( A_{1} + R_{1,\omega_{1,s}} \) as explained above, which is guaranteed to be injective.  Next we
  extend \( \psi_{1} \) to \( A_{2} + R_{1,\omega_{1,s}} \) in a similar way (Figure
  \ref{fig:extension-step-two}).  Given \( x' \in A_{2} \) and points \( y'_{1}, \ldots, y'_{q} \in
  \mathcal{D}_{1} \cap \bigl(x' + R_{1,\omega_{1,s}}\bigr) \), let \( z' \in
  \mathcal{C}_{2,\omega_{2,s}} \) be such that \( \zeta(y'_{j}) \in z' + R_{2,\omega_{2,s}} \) for
  all \( j \).  Pick a smooth disk inside \( z' + R_{2,\omega_{2,s}} \) that contains all the points
  \( \zeta(y'_{j}) \), does not contain any other points of \( \mathcal{D}_{2} \), and does not
  intersect any smooth disks inside \( z' + R_{2,\omega_{2,s}} \) picked at the previous step.  One
  may now use the Extension Lemma to extend \( \psi_{1} \) to a map
  \[ \psi_{1} : (A_{1} \cup A_{2}) + R_{1,\omega_{1,s}} \to \mathcal{C}_{2,\omega_{2,s}} +
    R_{2,\omega_{2,s}}.\]
  Continuing in the same fashion, \( \psi_{1} \) can be extended to all of \( \mathcal{V}_{1,s} +
  R_{1,\omega_{1,s}} \).

  The construction above was performed for a fixed \( s \in \mathbb{N}^{1} \), doing it for all \( s
  \in \mathbb{N}^{1} \) results in the required map 
  \[ \psi_{1} : \bigcup_{s \in \mathbb{N}^{1}} \bigl(\mathcal{V}_{1,s} + R_{1,\omega_{1,s}}\bigr)
    \to \bigcup_{s \in \mathbb{N}^{1}} \bigl(\mathcal{C}_{2,\omega_{2,s}} + R_{2,\omega_{2,s}}\bigr). \]

  We have explained the way \( \psi_{1} \) is defined, but we owe the reader an explanation why this
  construction is Borel.  This is the place where we are going to use rational grids.  Since all
  cross sections \( \mathcal{C}_{i,n} \), \( \mathcal{D}_{i} \) are assumed to be on the rational
  grid \( Q_{i} \), at each step of the construction, every box of the form
  \( x + R_{i,\omega_{i,s}} \), \( x \in \mathcal{C}_{i,n} \), has only \emph{countable many}
  possible configurations.  For example, for any \( x \in A_{1} \) the configuration of
  \( \mathcal{D}_{1} \cap \bigl(x + R_{1,\omega_{1,s}}\bigr) \) is uniquely determined by the
  vectors \( \rho(y_{i}, x) \in \mathbb{Q}^{d} \).  Since we have
  only countably many possible configurations, we can partition \( A_{1} \) into countably many
  pieces by collecting points with the same configuration of boxes around them, and apply \emph{the same}
  extension of \( \psi_{1} \) on each element of this partition.  Such an operation is clearly Borel
  for any choice of smooth disks around points \( y_{1}, \ldots, y_{m} \), any choice of smooth
  extensions given by Lemma \ref{lem:extension-lemma}, etc.  Thus having only countably many cases at
  each step of the back-and-forth construction ensures Borelness.

  We are now done with the base step of the construction.  The inductive step is of little
  difference.  At step \( n \) for an odd value of \( n \), \( \psi_{n} \) is constructed such that
  \( \mathcal{V}_{1,s} + R_{1,\omega_{1,s}}\) is in the domain of \( \psi_{n} \) for all
  \( s \in \mathbb{N}^{n} \), and on even stages we work with \( \zeta^{-1} \), ensuring that
  \( \mathcal{V}_{2,s} + R_{2,\omega_{2,s}} \) is in the range of \( \psi_{n} \) for all
  \( s \in \mathbb{N}^{n} \).  Item \eqref{item:large-measure} in the list of conditions on the sets
  \( \mathcal{V}_{i,s} \) guarantees that sets
  \[
    \begin{aligned}
      Z_{1} &= \bigcup_{m \in \mathbb{N}} \mkern-6mu\bigcap_{|s| \ge m \atop |s| \textrm{ is odd}}\mkern-8mu
      \bigl(\mathcal{V}_{1,s} + R_{1,\omega_{1,s}}\bigr) \\
      Z_{2} &= \bigcup_{m \in \mathbb{N}} \mkern-6mu\bigcap_{|s| \ge m \atop |s| \textrm{ is even}}\mkern-8mu
      \bigl(\mathcal{V}_{2,s} + R_{2,\omega_{2,s}}\bigr) \\
    \end{aligned}
  \]
  have measure one for any invariant ergodic probability measure, and therefore \( X_{i}\setminus
  Z_{i} \) are compressible.  The map \( \psi =
  \bigcup_{n} \psi_{n} \) is the required time change equivalence.
  
  The last remaining bit is to show how the trees of Borel partitions \( (\Omega_{i,s}) \) and integers
  \( (\omega_{i,s}) \) can be constructed.  For the base of construction, \( s = \es \), one sets
  \( \Omega_{i,\es} = X_{i} \) and \( \omega_{i,\es} = 0 \).  Suppose that \( \Omega_{i,s} \) and
  \( \omega_{i,s} \) have been defined for all \( s \in \mathbb{N}^{n} \).  Assume for
  definiteness that \( n \) is even. Pick some \( s \in \mathbb{N}^{n} \).  Since
  \[ \Omega_{1,s} = \bigcup_{n} \Bigl(\bigl( \mathcal{C}_{1,n} \cap \Omega_{1,n}\bigr) +
    R_{1,n}\Bigr), \]
  and the union is increasing, for each \( x \in \Omega_{1,s} \) one may pick \( b \in \mathbb{N} \)
  so large that \( b \ge \max\{\omega_{i,s}, |s| + 1\} \) and
  \[ \mu_{x}\Bigl( \bigl( \mathcal{C}_{1,n} \cap \Omega_{1,n}\bigr) +
    R_{1,n} \Bigr) < 2^{-|s|-2}. \]
  The map that sends \( x \mapsto b(x) \), where \( b(x) \) is the smallest \( b \in \mathbb{N} \)
  that satisfies these conditions is Borel.  Preimages \( W_{m} = b^{-1}(m) \) determine a countable Borel
  partition invariant under the ergodic decomposition:
  \[ \Omega_{1,s} = \bigsqcup_{m} W_{m}. \]
  Considering each \( W_{m} \) separately, we note that for each \( m \)
  \[
    \begin{aligned}
      \bigcup_{n}\Bigl\{ &x \in \mathcal{C}_{1, m} \cap W_{m} : \textrm{for all } y_{1},y_{2} \in
      \mathcal{D}_{1} \cap \bigl( x + R_{1,m}\bigr)\\ &\textrm{one has } \zeta(y_{1}) {\fber*[n][2]}
      \zeta(y_{2}) \Bigr\} = \mathcal{C}_{1,m} \cap W_{m},
    \end{aligned}
  \]
  and therefore for every \( x \in W_{m} \) there is \( n \in \mathbb{N} \) so large that
  \[
    \begin{aligned}
      \mu_{x}\Bigl( \bigl\{ &x \in \mathcal{C}_{1, m} \cap W_{m} : \textrm{for all } y_{1},y_{2} \in
      \mathcal{D}_{1} \cap \bigl( x + R_{1,m}\bigr)\\ &\textrm{one has } \zeta(y_{1}) {\fber*[n][2]}
      \zeta(y_{2}) \bigr\} + R_{1,m} \Bigr) \le 2^{-|s|-1}.
    \end{aligned}
\]
  The map \( x \mapsto c(x) \) that picks the smallest such \( n \) is Borel, and its preimages
  \( c^{-1}(n) \) determine Borel partitions \( W_{m} = \bigsqcup_{n} \tilde{W}_{m,n} \).  By
  re-enumerating \( W_{m,n} \) as \( \Omega_{1, s {}^{\frown} j} \) for \( j \in \mathbb{N} \) we define
  the next level of the tree of partitions.  The corresponding sets \( \Omega_{2,s \frown j} \) are
  defined uniquely by the condition
  \[ \zeta\bigl(\Omega_{1, s {}^{\frown} j} \cap \mathcal{D}_{1}\bigr) = \Omega_{2,s {}^{\frown} j} \cap
    \mathcal{D}_{2}. \]
  Finally, we set \( \omega_{1,s {}^{\frown} j} = m \) if \( \Omega_{1,s {}^{\frown} j} = W_{m,n} \) and we
  set \( \omega_{2,s {}^{\frown} j} = n \) whenever \( \Omega_{1,s {}^{\frown} j} = W_{m,n} \).  This finishes
  the construction of \( (\Omega_{i,s}), (\omega_{i,s}) \) and concludes the proof of the theorem.
\end{proof}

The assumption in the previous theorem that cross sections \( \mathcal{D}_{i} \) lie on rational
grids was used to give an easy argument why the construction of \( \psi_{n} \) is Borel, but the
theorem can now be easily improved by omitting this restriction.

\begin{theorem}
  \label{thm:extending-cross-section-to-time-change}
  Let \( \mathcal{R}^{d} \acts X_{i} \) be free non smooth Borel flows, let \( \mathcal{D}_{i}
  \subseteq X_{i} \) be cocompact cross sections and let \( \zeta : \mathcal{D}_{1} \to
  \mathcal{D}_{2} \) be an orbit equivalence map.  There are cocompressible invariant Borel sets \(
  Z_{i} \subseteq X_{i} \) and a time change equivalence \( \psi: Z_{1} \to Z_{2} \) that extends \(
  \zeta \) on \( Z_{1} \cap \mathcal{D}_{1} \).
\end{theorem}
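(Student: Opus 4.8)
The strategy is to deduce this from Theorem~\ref{thm:tce-up-to-cocompressible} by perturbing the given cross sections onto rational grids. First, use Lemma~\ref{lem:existence-of-rational-grids} to fix rational grids \( Q_{i} \subseteq X_{i} \), \( i = 1,2 \). Choose \( \epsilon > 0 \) smaller than a lacunarity constant for each of \( \mathcal{D}_{1}, \mathcal{D}_{2} \), and apply Lemma~\ref{lem:rational-perturbation} to obtain cross sections \( \mathcal{D}_{i}' \subseteq Q_{i} \) on the grids together with orbit equivalences \( \theta_{i} : \mathcal{D}_{i} \to \mathcal{D}_{i}' \) satisfying \( \bigl|\bigl|\rho\bigl(x,\theta_{i}(x)\bigr)\bigr|\bigr| < \epsilon \). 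Two observations about these perturbations: each \( \mathcal{D}_{i}' \) is again cocompact, since \( \mathcal{D}_{i} + K_{i} = X_{i} \) implies \( \mathcal{D}_{i}' + \bigl(K_{i} + \overline{B(\epsilon)}\bigr) = X_{i} \) with \( K_{i} + \overline{B(\epsilon)} \) compact; and by the Small Perturbation Lemma~\ref{lem:small-perturbations-are-time-change-equivalent} each \( \theta_{i} \) extends to a time change equivalence \( \Theta_{i} : X_{i} \to X_{i} \), which in particular is a Borel automorphism of \( X_{i} \) respecting \( \cber[X_{i}] \).

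Next, transport the given orbit equivalence onto the grids by setting \( \zeta' = \theta_{2} \circ \zeta \circ \theta_{1}^{-1} : \mathcal{D}_{1}' \to \mathcal{D}_{2}' \); this is an orbit equivalence between cocompact cross sections lying on \( Q_{1} \) and \( Q_{2} \), so Theorem~\ref{thm:tce-up-to-cocompressible} applies and yields invariant cocompressible Borel sets \( Z_{i}' \subseteq X_{i} \) and a time change equivalence \( \psi' : Z_{1}' \to Z_{2}' \) extending \( \zeta' \) on \( Z_{1}' \cap \mathcal{D}_{1}' \). Finally pull everything back: put \( Z_{i} = \Theta_{i}^{-1}(Z_{i}') \). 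Since \( \Theta_{i} \) is a Borel automorphism preserving the orbit equivalence relation, \( Z_{i} \) is invariant and Borel, and its complement \( \Theta_{i}^{-1}(X_{i} \setminus Z_{i}') \) carries an invariant Borel probability measure only if \( X_{i} \setminus Z_{i}' \) does (invariant measures transport along orbit equivalences), so \( Z_{i} \) is cocompressible. Define \( \psi = \Theta_{2}^{-1} \circ \psi' \circ \Theta_{1} : Z_{1} \to Z_{2} \); being a composition of maps each of which restricts to a homeomorphism on every orbit, \( \psi \) is a time change equivalence.

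It remains to check the extension property. For \( x \in Z_{1} \cap \mathcal{D}_{1} \) we have \( \Theta_{1}(x) = \theta_{1}(x) \in Z_{1}' \cap \mathcal{D}_{1}' \), hence \( \psi'\bigl(\theta_{1}(x)\bigr) = \zeta'\bigl(\theta_{1}(x)\bigr) = \theta_{2}\bigl(\zeta(x)\bigr) \in Z_{2}' \cap \mathcal{D}_{2}' \); applying \( \Theta_{2}^{-1} \), which agrees with \( \theta_{2}^{-1} \) on \( \mathcal{D}_{2}' \), gives \( \psi(x) = \zeta(x) \), and in particular \( \zeta(x) \in Z_{2} \). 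This completes the proof. The argument is essentially bookkeeping; the only points that need a moment's care are that the perturbed cross sections remain cocompact, that time change equivalences are closed under composition, and that cocompressibility is preserved by the Borel automorphisms \( \Theta_{i} \) — none of which presents a genuine obstacle.
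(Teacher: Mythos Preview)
Your proof is correct and follows essentially the same route as the paper: perturb the cross sections onto rational grids via Lemma~\ref{lem:rational-perturbation}, extend the perturbations to time change self-equivalences via Lemma~\ref{lem:small-perturbations-are-time-change-equivalent}, invoke Theorem~\ref{thm:tce-up-to-cocompressible} for the conjugated orbit equivalence, and conjugate back. Your write-up is in fact a bit more explicit than the paper's (you spell out cocompactness of \( \mathcal{D}_{i}' \), preservation of cocompressibility, and the extension check); note also that since each \( \Theta_{i} \) maps every orbit to itself, your sets \( Z_{i} = \Theta_{i}^{-1}(Z_{i}') \) actually coincide with \( Z_{i}' \), so that step, while harmless, is redundant.
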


\begin{proof}
  By Lemma \ref{lem:existence-of-rational-grids} we may pick rational grids
  \( Q_{i} \subseteq X_{i} \).  Lemma \ref{lem:rational-perturbation} allows us to choose cocompact cross
  sections \( \mathcal{D}_{i}' \subseteq Q_{i} \) and Borel orbit equivalence maps \( \phi_{i} :
  \mathcal{D}_{i} \to \mathcal{D}_{i}' \) such that \( \bigl|\bigl| \rho(x, \phi_{i}(x))
  \bigr|\bigr| < \epsilon \) for \( \epsilon \) less than the lacunarity parameter of \(
  \mathcal{D}_{i} \).  By Lemma \ref{lem:small-perturbations-are-time-change-equivalent}, \( \phi_{i}
  \) can be extended to time change equivalences, which we denote by the same letter.  Finally, we
  apply Theorem \ref{thm:tce-up-to-cocompressible} to \( \mathcal{D}_{i}' \subseteq Q_{i} \) and the
  map \( \zeta' : \mathcal{D}_{1}' \to \mathcal{D}_{2}' \) given by
  \[ \zeta'(x) = \phi_{2} \circ \zeta \circ \phi_{1}^{-1} (x), \]
  which produces a time change equivalence \( \psi' : Z_{1} \to Z_{2} \) between cocompressible
  sets.  The required map \( \psi \) is given by \( \psi = \phi_{2}^{-1} \circ \psi' \circ \phi_{1} \).
\end{proof}

\begin{theorem}
  \label{thm:time-change-equivalence-by-compressible}
  Let \( \mathbb{R}^{d} \acts X_{1} \) and \( \mathbb{R}^{d} \acts X_{2} \) be non smooth free Borel
  flows.  There are cocompressible invariant Borel sets \( Z_{i} \subseteq X_{i} \) such that
  restrictions of the flows onto these sets are time change equivalent.
\end{theorem}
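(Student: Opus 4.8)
The plan is to reduce the statement to the extension theorem, Theorem \ref{thm:extending-cross-section-to-time-change}: it is enough to produce cocompact cross sections of suitable cocompressible restrictions of the two flows whose induced countable orbit equivalence relations are Borel isomorphic, and then to feed any such isomorphism into that theorem as the map \( \zeta \). Borel isomorphism of the cross section relations, after discarding compressible pieces, is precisely what the classification of hyperfinite Borel equivalence relations \cite{MR1149121} supplies.

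First I would fix cocompact cross sections \( \mathcal{C}_{i} \subseteq X_{i} \), \( i = 1, 2 \) (these exist by Conley's theorem), and set \( E_{i} := \cber[X_{i}] \upharpoonright \mathcal{C}_{i} \). Each \( E_{i} \) is a countable Borel equivalence relation; it is aperiodic because \( \mathcal{C}_{i} \) is a lacunary section meeting each (non-compact) orbit of a free flow in an infinite set; it is hyperfinite by the fact recalled in the proof of Lemma \ref{lem:existence-of-rational-grids}; and it is non-smooth, since a Borel transversal for \( E_{i} \) would also be a Borel transversal for \( \cber[X_{i}] \), against non-smoothness of the flow. I would also use the standard correspondence between invariant finite Borel measures of a flow on an invariant set and transverse invariant measures of the corresponding restriction of a cross section: an invariant Borel \( W \subseteq X_{i} \) is compressible exactly when \( E_{i} \upharpoonright (\mathcal{C}_{i} \cap W) \) carries no invariant probability measure.

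Next I would apply \cite{MR1149121} to the non-smooth aperiodic hyperfinite relations \( E_{1}, E_{2} \) to extract \( E_{i} \)-invariant Borel sets \( A_{i} \subseteq \mathcal{C}_{i} \) for which \( E_{i} \upharpoonright A_{i} \) is still non-smooth, \( \mathcal{C}_{i} \setminus A_{i} \) admits no invariant probability measure, and there is a Borel isomorphism \( \zeta : (A_{1}, E_{1} \upharpoonright A_{1}) \to (A_{2}, E_{2} \upharpoonright A_{2}) \). Setting \( Z_{i}^{0} := [A_{i}]_{\cber[X_{i}]} \), the flow-saturation of \( A_{i} \), one gets \( \mathcal{C}_{i} \cap Z_{i}^{0} = A_{i} \) by \( E_{i} \)-invariance of \( A_{i} \); hence \( A_{i} \) is a cross section of the free flow \( \mathbb{R}^{d} \acts Z_{i}^{0} \), and it is cocompact because every point of \( Z_{i}^{0} \) is a bounded translate of a point of \( \mathcal{C}_{i} \), which, lying in the invariant set \( Z_{i}^{0} \), must lie in \( A_{i} \). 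The flow \( \mathbb{R}^{d} \acts Z_{i}^{0} \) is non-smooth because \( E_{i} \upharpoonright A_{i} \) is, and \( X_{i} \setminus Z_{i}^{0} \) is compressible by the dictionary above, so \( Z_{i}^{0} \) is cocompressible in \( X_{i} \).

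Finally, Theorem \ref{thm:extending-cross-section-to-time-change} applied to the flows \( \mathbb{R}^{d} \acts Z_{i}^{0} \), the cross sections \( A_{i} \), and the orbit equivalence \( \zeta \) yields invariant Borel sets \( Z_{i} \subseteq Z_{i}^{0} \) with \( Z_{i}^{0} \setminus Z_{i} \) compressible and a time change equivalence \( \psi : Z_{1} \to Z_{2} \). Since the union of two invariant compressible sets is again compressible, \( X_{i} \setminus Z_{i} = (X_{i} \setminus Z_{i}^{0}) \cup (Z_{i}^{0} \setminus Z_{i}) \) is compressible, the sets \( Z_{i} \) are cocompressible, and \( \psi \) is the desired map. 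I expect the delicate point to be the invocation of \cite{MR1149121}: one has to extract from the classification of hyperfinite equivalence relations an isomorphism of the cross section relations after deleting only genuinely compressible pieces while keeping the remainders non-smooth, and to reconcile compressibility on the cross section with compressibility of the saturated invariant sets in the flow.
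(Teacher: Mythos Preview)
Your reduction to Theorem \ref{thm:extending-cross-section-to-time-change} is exactly the paper's strategy, but the step where you invoke \cite{MR1149121} does not go through as stated. The Dougherty--Jackson--Kechris classification says that two non-smooth aperiodic hyperfinite relations are Borel isomorphic \emph{iff they have the same cardinal number of invariant ergodic probability measures}. That cardinal is unchanged by deleting an invariant compressible piece: if \( A_{i} \subseteq \mathcal{C}_{i} \) is \( E_{i} \)-invariant and \( \mathcal{C}_{i} \setminus A_{i} \) carries no \( E_{i} \)-invariant probability measure, then every ergodic \( E_{i} \)-invariant probability measure \( \nu \) on \( \mathcal{C}_{i} \) must satisfy \( \nu(A_{i}) = 1 \) (otherwise, by ergodicity, \( \nu \) would live on the complement), so \( E_{i} \upharpoonright A_{i} \) has precisely the same set of ergodic invariant measures as \( E_{i} \). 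Hence if \( |\mathcal{E}(E_{1})| \ne |\mathcal{E}(E_{2})| \), no choice of cocompressible \( A_{1}, A_{2} \) will make \( E_{1} \upharpoonright A_{1} \) and \( E_{2} \upharpoonright A_{2} \) Borel isomorphic, and your step~3 fails. You correctly flagged this as the delicate point; it is in fact the obstruction.

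The paper bypasses this by a two-stage argument. First, when \( |\mathcal{E}(X_{1})| = |\mathcal{E}(X_{2})| \), the cross section relations already have the same number of ergodic measures, so \cite{MR1149121} supplies an orbit equivalence \( \zeta : \mathcal{D}_{1} \to \mathcal{D}_{2} \) directly (no compressible set needs to be removed at this stage), and Theorem \ref{thm:extending-cross-section-to-time-change} finishes. Second, to bridge different values of \( \kappa = |\mathcal{E}(X_{i})| \), the paper uses transitivity of ``time change equivalent up to a compressible set'' together with an explicit construction: for each pair \( \kappa_{1}, \kappa_{2} \) one builds product flows \( Y_{i} = \tilde{Y}_{i} \times W \), where \( \tilde{Y}_{i} \) are non-smooth \( \mathbb{R} \)-flows with \( |\mathcal{E}(\tilde{Y}_{i})| = \kappa_{i} \) and \( W \) is a uniquely ergodic \( \mathbb{R}^{d-1} \)-flow; the Miller--Rosendal theorem for \( \mathbb{R} \)-flows gives a genuine time change equivalence \( \tilde{Y}_{1} \to \tilde{Y}_{2} \), which lifts to \( Y_{1} \to Y_{2} \). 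This external input is what lets the argument cross between different values of \( \kappa \), and it is precisely the ingredient your proposal is missing.
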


\begin{proof}
  We first prove the theorem under the additional assumption that flows posses the same number of
  invariant ergodic probability measures.  Pick cocompact cross sections \( \mathcal{D}_{i}
  \subseteq X_{i} \).  It is known (see, for instance,
  \cite[Proposition 4.4]{2015arXiv150400958S}) that restriction of the orbit equivalence relation onto
  \( \mathcal{D}_{i} \) has the same number of ergodic invariant probability measures as the flow
  \( \mathbb{R}^{d} \acts X_{i} \).  Since orbit equivalence relations on \( \mathcal{D}_{i} \) are
  hyperfinite (by \cite[Theorem 1.16]{MR1900547}), the classification of hyperfinite relations
  \cite[Theorem 9.1]{MR1149121}
  implies that there is an orbit equivalence \( \zeta : \mathcal{D}_{1} \to \mathcal{D}_{2} \).  An
  application of Theorem \ref{thm:extending-cross-section-to-time-change} finishes the argument.

  Since the relation of being time change equivalent up to a compressible set is clearly transitive,
  to complete the proof it is therefore enough to show that for any two possible sizes
  \( \kappa_{1}, \kappa_{2} \in \mathbb{N}^{>0} \cup \{\aleph, \mathfrak{c}\} \) of the spaces of
  ergodic invariant probability measures there are time change equivalent Borel flows
  \( \mathbb{R}^{d} \acts Y_{i} \) with \( \bigl| \mathcal{E}(Y_{i}) \bigr| = \kappa_{i} \).  To
  this end pick Borel \( \mathbb{R} \)-flows \( \mathbb{R} \acts \tilde{Y}_{i} \) 
  such that \( |\mathcal{E}(\tilde{Y}_{i})| = \kappa_{i} \) and let
  \( \mathbb{R}^{d-1} \acts W \) be any uniquely ergodic flow; set
  \( Y_{i} = \tilde{Y}_{i} \times W \) and let \( \mathbb{R}^{d} \acts Y_{i} \) be the product
  action.  One has \( \bigl| \mathcal{E}(Y_{i}) \bigr| = \kappa_{i} \), and we claim
  that these flows are time change equivalent.  By a theorem of B.~Miller and C.~Rosendal
  \cite[Theorem 2.19]{MR2578608}, the flows \( \mathbb{R} \acts \tilde{Y}_{i} \) are time change equivalent via
  some \( \tilde{\phi} : \tilde{Y}_{1} \to \tilde{Y}_{2} \).  Define \( \phi : Y_{1} \to Y_{2} \) by
  the formula
  \[ \phi(y,w) = \bigl(\tilde{\phi}(y), w\bigr).\]
  A straightforward verification shows that \( \phi \) is indeed a time change equivalence between
  the flows \( \mathbb{R}^{d} \acts Y_{1} \) and \( \mathbb{R}^{d} \acts Y_{2} \) as claimed.
\end{proof}

\section{Periodic flows}
\label{sec:periodic-flows}

Recall that for a Polish space \( X \) the \emph{Effros Borel space} of \( X \) is the set \(
\eff(X) \) of closed subsets of \( X \) endowed with the \( \sigma \)-algebra generated by the sets
of the form
\[ \{ F \in \eff(X) : F \cap U \ne \es \}, \qquad U \subseteq X \textrm{ is open}. \]
The space \( \eff(X) \) is a standard Borel space.  We refer the reader to \cite[Sections 12.C,
12.E]{MR1321597} for the basic properties of \( \eff(X) \), one of the main of which is the
Kuratowski--Ryll-Nardzewski Selection Theorem.
\begin{theorem}[Selection Theorem]
  \label{thm:selection-theorem}
  Let \( X \) be a Polish space.  There is a sequence of Borel functions \( f_{n} : \eff(X) \to X \)
  such that for every non-empty \( F \in \eff(X) \) the set \( \{f_{n}(F)\}_{n \in \mathbb{N}} \) is
  a dense subset of \( F \):
  \[ F = \overline{\{f_{n}(F) : n \in \mathbb{N}\}}. \]
\end{theorem}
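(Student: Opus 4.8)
The plan is to deduce the theorem from two reductions: first, the construction of a single Borel \emph{selector} \( s : \eff(X) \to X \) that picks a point out of every non-empty closed set; second, a \emph{localization} of this selector to a countable base of \( X \).

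\emph{Step 1: a Borel selector.} Fix a complete metric \( d \le 1 \) compatible with the topology of \( X \) together with a sequence \( (x_{k})_{k \in \mathbb{N}} \) dense in \( X \). I would build \( s \) by recursion, defining for each \( F \) an index sequence \( \bigl(k_{n}(F)\bigr)_{n} \) as follows: let \( k_{0}(F) \) be the least \( k \) with \( \dist(x_{k}, F) < 1 \), and, having chosen \( k_{n}(F) \) with \( \dist\bigl(x_{k_{n}(F)}, F\bigr) < 2^{-n} \), let \( k_{n+1}(F) \) be the least \( k \) with \( d\bigl(x_{k}, x_{k_{n}(F)}\bigr) < 2^{-n} \) and \( \dist(x_{k}, F) < 2^{-n-1} \); such an index exists by density of \( (x_{k}) \) and the triangle inequality. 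The crucial observation is that each condition \( \dist(x_{k}, F) < r \) carves out exactly one of the generators \( \{ F : F \cap U \ne \es \} \) of the Effros \( \sigma \)-algebra (with \( U \) the open \( d \)-ball of radius \( r \) about \( x_{k} \)), so every map \( F \mapsto x_{k_{n}(F)} \) is countably valued with Borel fibers, hence Borel. The sequence \( \bigl(x_{k_{n}(F)}\bigr)_{n} \) is \( d \)-Cauchy, so by completeness it converges to a point \( s(F) \); and since \( \dist\bigl(s(F), F\bigr) \le d\bigl(s(F), x_{k_{n}(F)}\bigr) + \dist\bigl(x_{k_{n}(F)}, F\bigr) < 2^{-n+1} + 2^{-n} \) for all \( n \), we get \( \dist\bigl(s(F), F\bigr) = 0 \), whence \( s(F) \in F \) because \( F \) is closed. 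As a pointwise limit of Borel maps \( s \) is Borel (on \( \es \) one lets \( s \) take any fixed value, which plays no role).

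\emph{Step 2: localization and density.} For an open set \( U \subseteq X \) the map \( F \mapsto \overline{F \cap U} \) is a Borel map \( \eff(X) \to \eff(X) \), since for open \( V \) one has \( \overline{F \cap U} \cap V \ne \es \) iff \( F \cap (U \cap V) \ne \es \) (as \( V \) is open), so the preimage of the generator attached to \( V \) is the generator attached to \( U \cap V \). Hence \( f_{U} := s \circ \bigl( F \mapsto \overline{F \cap U} \bigr) \) is Borel, and \( f_{U}(F) \in \overline{F \cap U} \subseteq F \) whenever \( F \cap U \ne \es \) (using that \( F \) is closed). Now fix a countable base \( \{ U_{m} \}_{m \in \mathbb{N}} \) of \( X \) consisting of open \( d \)-balls of rational radius, so the \( U_{m} \) have arbitrarily small diameter, and let \( (f_{n})_{n} \) enumerate \( \{ f_{U_{m}} : m \in \mathbb{N} \} \). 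Given non-empty \( F \), a point \( x \in F \), and \( \epsilon > 0 \), pick \( m \) with \( x \in U_{m} \) and \( \operatorname{diam} U_{m} < \epsilon \); then \( F \cap U_{m} \ne \es \), so \( f_{U_{m}}(F) \in \overline{F \cap U_{m}} \subseteq \overline{U_{m}} \) and therefore \( d\bigl( f_{U_{m}}(F), x \bigr) < \epsilon \). Thus \( \{ f_{n}(F) : n \in \mathbb{N} \} \) is dense in \( F \), as desired.

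I expect the only genuinely delicate point to be the bookkeeping in Step 1: one must arrange the recursion so that at each stage the chosen index depends on \( F \) in a Borel fashion, while the resulting sequence is simultaneously Cauchy and shadows \( F \) closely enough for its limit to land inside \( F \). Once the selector \( s \) is in hand, Step 2 is a formal manipulation of the Effros structure, and no further descriptive-set-theoretic machinery is needed.
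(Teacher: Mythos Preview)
The paper does not prove this statement: it is quoted as the classical Kuratowski--Ryll-Nardzewski selection theorem, with a reference to Kechris's book, and is used as a black box thereafter. So there is no ``paper's proof'' to compare against; what you have written is essentially the standard proof of the theorem, and the two-step outline (Borel selector via a Cauchy-in-\(F\) approximation scheme, then localization to a countable base) is correct in substance.

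There is, however, one small gap in Step~2 as written. You need every \( f_{n}(F) \) to lie in \( F \), since the conclusion is \( F = \overline{\{ f_{n}(F) \}} \), not merely \( F \subseteq \overline{\{ f_{n}(F) \}} \). Your functions \( f_{U_{m}} \) satisfy \( f_{U_{m}}(F) \in F \) only when \( F \cap U_{m} \ne \es \); on the Borel set \( \{ F : F \cap U_{m} = \es \} \) the value \( s(\es) \) is an arbitrary fixed point of \( X \), generally not in \( F \). The fix is immediate: redefine
\[
  f_{U_{m}}(F) =
  \begin{cases}
    s\bigl(\overline{F \cap U_{m}}\bigr) & \textrm{if } F \cap U_{m} \ne \es,\\
    s(F) & \textrm{otherwise},
  \end{cases}
\]
which is still Borel (the case split is on the complement of a generator) and now lands in \( F \) for every non-empty \( F \). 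With this adjustment your argument is complete.
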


When \( X \) is a Polish group, one may consider the subset \( \sgr(X) \subseteq \eff(X) \) of
closed subgroups of \( X \), which is a Borel subset of \( \eff(X) \), and is therefore a standard
Borel space in its own right.

In the following we consider the space \( \sgr\bigl(\mathbb{R}^{d}\bigr) \).  A closed subgroup of
\( \mathbb{R}^{d} \) is isomorphic to a group \( \mathbb{R}^{p} \times \mathbb{Z}^{q} \) for some
\( p, q \in \mathbb{N} \), \( p + q \le d \).  This isomorphism can, in fact, be chosen in a Borel
way throughout \( \sgr\bigl(\mathbb{R}^{d}\bigr) \).  For \( p,q \in \mathbb{N} \) with
\( p + q \le d \) we let
\( \sgr_{p,q}\bigl(\mathbb{R}^{d}\bigr) \subseteq \sgr\bigl( \mathbb{R}^{d} \bigr) \) to denote the
set of groups that are isomorphic to \( \mathbb{R}^{p} \times \mathbb{Z}^{q} \).

\begin{lemma}
  \label{lem:effros-basis-selectors} Let \( \ssp(\mathbb{R}^{d}) \) denote the set of all subspaces
  of \( \mathbb{R}^{d} \).
  \begin{enumerate}
  \item\label{item:span-is-borel} The map \( \eff\bigl(\mathbb{R}^{d}\bigr) \ni F \mapsto \spanop(F) \in
    \eff(\mathbb{R}^{d}) \) is Borel.
  \item \label{item:basis-for-the-connected-component} For any \( F \in \sgr(\mathbb{R}^{d}) \),
    connected component of the origin is a vector space.  One may choose bases for these spaces in
    a Borel way: there are Borel maps \( \alpha_{i} : \sgr(\mathbb{R}^{d}) \to \mathbb{R}^{d} \), \(
    i \in \mathbb{N} \), such that for every \( F \in \sgr(\mathbb{R}^{d}) \) the set
    \[ \bigl\{ \alpha_{i}(F) : 1 \le i \le p\bigr\} \] is a basis for the connected
    component of zero in \( F \), where \( p \) is such that \( F \in \sgr_{p,q}(\mathbb{R}^{d}) \).
  \item\label{item:dimension-is-Borel} \( \sgr_{p,q}\bigl(\mathbb{R}^{d}\bigr) \) is a Borel subset of
    \( \sgr\bigl( \mathbb{R}^{d} \bigr) \) for any \( p \) and \( q \). 
  \item\label{item:basis-for-the-discrete-part} There is a Borel choice of ``basis'' for the
    discrete part of
    \( \sgr_{p,q}(\mathbb{R}^{d}) \): there are Borel maps
    \[ \beta_{i} : \sgr_{p,q}(\mathbb{R}^{d}) \to \mathbb{R}^{d}, \quad  1 \le i \le q,\]
    such that for all \( F \in \sgr_{p,q}(\mathbb{R}^{d}) \) the function
    \[ \mathbb{R}^{p} \times \mathbb{Z}^{q} \ni (a_{1}, \ldots, a_{p}, n_{1}, \ldots, n_{q}) \to
      \sum_{i=1}^{p}a_{i}\alpha_{i}(F)  + \sum_{j=1}^{q}n_{j}\beta_{j}(F) \in F \]
    is an isomorphism, where \( \alpha_{i} \) are as in
    \eqref{item:basis-for-the-connected-component}.
  \end{enumerate}
\end{lemma}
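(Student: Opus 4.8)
The plan is to run every construction through the selectors $f_{n}$ of the Selection Theorem, combined with the classical structure of closed subgroups of $\mathbb{R}^{d}$: each $F \in \sgr(\mathbb{R}^{d})$ splits as $F = F^{0} \oplus \Lambda$, where the identity component $F^{0}$ is a linear subspace and $\Lambda \cong \mathbb{Z}^{q}$ is discrete with $\spanop(\Lambda) \cap F^{0} = \{0\}$; moreover $F^{0}$ is open in $F$, so the quotient $F/F^{0}$ is discrete, and $\spanop(F) = F^{0} \oplus \spanop(\Lambda)$ has dimension $p+q$. For \eqref{item:span-is-borel}: since $\spanop(F) = \spanop\{f_{n}(F) : n \in \mathbb{N}\}$, for every open $U$ one has $\spanop(F) \cap U \ne \es$ if and only if $\sum_{i \le k} q_{i} f_{n_{i}}(F) \in U$ for some $k$, some indices $n_{i}$, and some rationals $q_{i}$; this writes $\{F : \spanop(F) \cap U \ne \es\}$ as a countable union of Borel sets. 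The same idea shows $F \mapsto \dim \spanop(F)$ is Borel, as ``$\dim \spanop(F) \ge j$'' is the (Borel) assertion that $f_{n_{1}}(F), \dots, f_{n_{j}}(F)$ are linearly independent for some choice of indices.

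For \eqref{item:basis-for-the-connected-component} and \eqref{item:dimension-is-Borel}: the structural facts above give $F^{0} = \spanop\bigl(F \cap B(\epsilon)\bigr)$ for all sufficiently small $\epsilon > 0$, hence, writing $D = \{f_{n}(F)\}$, the subspaces $\spanop\bigl(D \cap B(1/k)\bigr)$ decrease in $k$ and are eventually equal to $F^{0}$, so $\dim F^{0} = \min_{k} \dim \spanop\bigl(D \cap B(1/k)\bigr)$ is a Borel function of $F$ by the previous paragraph. Thus each $\{F : \dim F^{0} = p\}$ is Borel, and since $\sgr_{p,q}(\mathbb{R}^{d}) = \{F : \dim F^{0} = p,\ \dim \spanop(F) = p+q\}$, we obtain \eqref{item:dimension-is-Borel}. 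On the Borel set $\{\dim F^{0} = p\}$ let $k(F)$ be least with $\dim \spanop\bigl(D \cap B(1/k)\bigr) = p$, and select $\alpha_{1}(F), \dots, \alpha_{p}(F)$ greedily inside $D \cap B\bigl(1/k(F)\bigr)$: take $\alpha_{1}(F)$ to be the first nonzero $f_{n}(F)$ of norm $< 1/k(F)$, and $\alpha_{i+1}(F)$ the first such $f_{n}(F)$ lying outside $\spanop(\alpha_{1}(F), \dots, \alpha_{i}(F))$, setting the remaining $\alpha_{i}$ equal to $0$. All choices are Borel and $\{\alpha_{1}(F), \dots, \alpha_{p}(F)\}$ is a basis of $F^{0}$.

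For \eqref{item:basis-for-the-discrete-part}: on $\sgr_{p,q}(\mathbb{R}^{d})$ let $P_{F}$ be the orthogonal projection of $\mathbb{R}^{d}$ onto $(F^{0})^{\perp}$, a Borel function of $F$ via the Gram--Schmidt orthonormalization of $\alpha_{1}(F), \dots, \alpha_{p}(F)$. Since $\ker\bigl(P_{F}|_{F}\bigr) = F^{0}$ and $F/F^{0}$ is discrete, $P_{F}(F)$ is a rank-$q$ lattice in the $q$-dimensional subspace $P_{F}\bigl(\spanop F\bigr)$; as $\{P_{F}(f_{n}(F))\}$ is dense in this discrete set, it equals $P_{F}(F)$. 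Hence the covolume of $P_{F}(F)$ is the minimum, over $q$-tuples of indices for which $P_{F}(f_{n_{1}}(F)), \dots, P_{F}(f_{n_{q}}(F))$ are linearly independent, of the $q$-dimensional volume of the parallelepiped they span --- the minimum being attained because the generating family contains a genuine $\mathbb{Z}$-basis --- and this minimum is Borel in $F$. Taking $(n_{1}, \dots, n_{q})$ lexicographically least attaining it with linearly independent images and setting $\beta_{j}(F) = f_{n_{j}}(F)$, the projections $P_{F}(\beta_{j}(F))$ form a $\mathbb{Z}$-basis of $P_{F}(F)$ (a linearly independent $q$-subset of a full lattice whose parallelepiped has volume equal to the covolume must be a basis), from which $F = F^{0} \oplus \bigl(\mathbb{Z}\beta_{1}(F) + \dots + \mathbb{Z}\beta_{q}(F)\bigr)$ follows; this is exactly the statement that the displayed map $\mathbb{R}^{p} \times \mathbb{Z}^{q} \to F$ is an isomorphism.

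I expect the main obstacle to lie in \eqref{item:basis-for-the-discrete-part}: one must notice that the projected selectors $P_{F}(f_{n}(F))$ \emph{exhaust} the quotient lattice $F/F^{0}$ rather than merely approximate it --- so that a minimizing $q$-tuple exists among them and is automatically a $\mathbb{Z}$-basis --- and one must keep the volume and covolume computations uniformly Borel in $F$, which is precisely what the Borel orthonormal basis of $F^{0}$ and the projection $P_{F}$ are there to provide.
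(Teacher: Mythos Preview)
Your proof is correct and follows essentially the same strategy as the paper: run everything through the Kuratowski--Ryll-Nardzewski selectors, identify \(F^{0}\) in a Borel way, then project off \(F^{0}\) and select a \(\mathbb{Z}\)-basis for the resulting lattice. The only tactical differences are that the paper detects membership in \(F^{0}\) via infinite divisibility (\(f_{m}(F)/n\) is approximable in \(F\) for all \(n\)) rather than your small-ball criterion, and picks a \(\mathbb{Z}\)-basis by the ``no interior lattice points in the fundamental parallelepiped'' test rather than your minimum-covolume test --- both pairs of criteria are equivalent and equally Borel.
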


\begin{proof}
  Let \( f_{n} : \eff\bigl( \mathbb{R}^{d} \bigr) \to \mathbb{R}^{d} \) be a sequence of Borel
  selectors from the Kuratowski--Ryll-Nardzewski Theorem.

  \eqref{item:span-is-borel} For an open subset \( U \subseteq \mathbb{R}^{d} \) the set
  \( \{ F \in \eff(\mathbb{R}^{d}) : \spanop(F) \cap U \ne \es \} \) is equal to
  \[
    \begin{aligned}
      \bigl\{ F \in \eff(\mathbb{R}^{d}) :\ &\exists k_{1}, \ldots, k_{d}\ \exists a_{1}, \ldots,
      a_{d} \in \mathbb{Q} \textrm{ such that }\\ &a_{1}f_{k_{1}}(F) + \cdots + a_{d}f_{k_{d}}(F) \in
      U\bigr\}.
    \end{aligned}
\]

  \eqref{item:basis-for-the-connected-component} A basis \( \alpha_{i}(F) \) 
  can be defined by setting \( \alpha_{1}(F) = f_{m}(F) \) for the minimal \( m \in \mathbb{N} \)
  such that \( f_{m}(F) \ne \vec{0} \) and
  \[ \forall n \ge 1\ \forall \epsilon > 0\ \exists k \quad \bigl| f_{m}(F)/n - f_{k}(F) \bigr| <
    \epsilon, \]
  where we default \( \alpha_{1}(F) \) to \( \vec{0} \) if no such \( m \in \mathbb{N} \) exists.

  Continuing inductively, one sets \( \alpha_{n+1}(F) = f_{m}(F) \) for the minimal \( m \in
  \mathbb{N} \) such that \( f_{m}(F) \) is not in the span of \( \alpha_{i}(F) \), \( i \le n \),
  and \( f_{m}(F) \) is in the ``continuous part'' of \( F \):
  \[
    \begin{aligned}
      \alpha_{n+1}(F) = f_{m}(F)\ &\textrm{for the unique } m \in \mathbb{N} \textrm{ such that }\\
      &\forall N \ge 1\ \forall \epsilon > 0\ \exists k \quad \bigl| f_{m}(F)/N - f_{k}(F) \bigr| <
    \epsilon \textrm{ and}\\
      &\forall k < m\ \forall \epsilon > 0\ \exists a_{1}, \ldots, a_{n} \in \mathbb{Q}\\ &\qquad \bigl|
      a_{1}\alpha_{1}(F) + \cdots + a_{n} \alpha_{n}(F) - f_{k}(F)  \bigr| < \epsilon
      \textrm{ and}\\
      &\exists \epsilon > 0\ \forall a_{1}, \ldots, a_{n} \in \mathbb{Q}\\ &\qquad \bigl|
      a_{1}\alpha_{1}(F) + \cdots + a_{n} \alpha_{n}(F) - f_{m}(F)  \bigr| > \epsilon;
    \end{aligned}
  \]
  with the agreement that \( \alpha_{n+1}(F) = \vec{0} \) if no such \( m \) exists.

  \eqref{item:dimension-is-Borel} In view of
  \eqref{item:basis-for-the-connected-component}, the function
  \( \dim_{0} : \sgr(\mathbb{R}^{d}) \to \mathbb{N} \) that measures dimension of the connected
  component of the origin is Borel.  Thus by \eqref{item:span-is-borel} so is
  \[ \sgr_{p,q}(\mathbb{R}^{d}) = \Bigl\{ F \in \sgr(\mathbb{R}^{d}) : \dim_{0}(F) = p,\
    \dim_{0}\bigl(\spanop(F)\bigr) = p + q \Bigr\}. \]

  \eqref{item:basis-for-the-discrete-part} Let \( \alpha_{i}(F) \), \( 1 \le i \le p \), be a basis for
  the connected component of the origin in \( F \) provided by item
  \eqref{item:basis-for-the-connected-component}.  Set
  \begin{displaymath}
    \begin{aligned}
      W(F) &= \spanop\{\alpha_{i}(F) : 1 \le i \le p \}, \\
      z_{n}(F) &= f_{n}(F) - \proj_{W(F)} f_{n}(F).
    \end{aligned}
  \end{displaymath}
  Elements \( \{z_{n}(F)\} \) form a copy of \( \mathbb{Z}^{q} \) enumerated with repetitions.
  Indeed, \( \{f_{n}(F)\} \) intersects every coset \( F/W(F) \), and \( \{z_{n}(F)\} \) picks
  a unique point from each coset characterized by having a trivial projection onto \( W(F) \).  It
  therefore remains to pick a basis within \( \{z_{n}(F)\} \).

  To this end we set 
  \[ \bigl(\beta_{1}(F), \ldots, \beta_{q}(F)\bigr) = \bigl(z_{k_{1}}(F), \ldots,
    z_{k_{q}}(F)\bigr), \]
  where \( (k_{1}, \ldots, k_{q}) \) is the lexicographically least tuple such that
  \begin{itemize}
  \item \( \{z_{k_{j}}(F)\}_{j=1}^{q} \) are linearly independent;
  \item any \( z_{m}(F) \) that lies in the box
    \[ \Bigl\{ a_{1}z_{k_{1}}(F) + \cdots + a_{q}z_{k_{q}}(F) : 0 \le a_{j} \le 1,\ 1 \le j \le
      q\Bigr\}\]
    is equal to \( a_{1}z_{k_{1}}(F) + \cdots + a_{q}z_{k_{q}}(F) \) for some choice of \( a_{i} \in
    \{0,1\} \) (i.e., it is one of the vertices of the parallelepiped)
  \end{itemize}
  These conditions are easily seen to be Borel.  For instance, the last one can be written as
  \[
    \begin{aligned}
      \forall m\ &\Bigl(\exists \epsilon > 0\ \forall a_{1}, \ldots, a_{q} \in \mathbb{Q} \cap [0,1]\\
      &\qquad \bigl|a_{1}z_{k_{1}}(F) + \cdots + a_{q}z_{k_{q}}(F) - z_{m}(F)\bigr| > \epsilon\Bigr)
      \textrm{ or }\\
      & \Bigl(\exists a_{1}, \ldots, a_{q} \in \{0,1\} \quad z_{m}(F) = a_{1}z_{k_{1}}(F) + \cdots
      a_{q}z_{k_{q}}(F)\Bigr).\\  
    \end{aligned}
  \]
\end{proof}

\begin{corollary}
  \label{cor:basis-selectors}
  Let \( 0 \le p, q \le d \), \( p+q \le d\), be given.  There are Borel maps \( \alpha_{i} :
  \sgr_{p,q}(\mathbb{R}^{d}) \to \mathbb{R}^{d} \), \( 1 \le i \le p \), \( \beta_{j} :
  \sgr(\mathbb{R}^{d}) \to \mathbb{R}^{d} \), \( 1 \le j \le q \), \( \gamma_{k} :
  \sgr_{p,q}(\mathbb{R}^{d}) \to \mathbb{R}^{d} \), \( 1 \le k \le d - p - q \) such that for all \(
  F \in \sgr_{p,q}(F) \):
  \begin{enumerate}
  \item the map 
    \[ \mathbb{R}^{p} \times \mathbb{Z}^{q} \ni (a_{1}, \ldots, a_{p}, n_{1}, \ldots, n_{q}) \to
      \sum_{i=1}^{p}a_{i}\alpha_{i}(F) + \sum_{j=1}^{q}n_{j}\beta_{j}(F) \in F \] is an isomorphism;
    \item \( \bigl\{\alpha_{i}(F), \beta_{j}(F), \gamma_{k}(F)\bigr\} \) forms a basis for \( \mathbb{R}^{d} \).
  \end{enumerate}
\end{corollary}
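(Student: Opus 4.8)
The plan is to read off the maps $\alpha_{i}$ and $\beta_{j}$ directly from Lemma \ref{lem:effros-basis-selectors}, so that the first clause of the corollary holds for free, and then to manufacture the $\gamma_{k}$ by a Borel ``completion to a basis'' procedure against a fixed reference basis of $\mathbb{R}^{d}$.

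First I would take $\alpha_{i}\colon \sgr_{p,q}(\mathbb{R}^{d})\to\mathbb{R}^{d}$, $1\le i\le p$, to be the maps from item \eqref{item:basis-for-the-connected-component}, and $\beta_{j}\colon \sgr_{p,q}(\mathbb{R}^{d})\to\mathbb{R}^{d}$, $1\le j\le q$, to be the maps from item \eqref{item:basis-for-the-discrete-part}. These are Borel, the displayed map $\mathbb{R}^{p}\times\mathbb{Z}^{q}\to F$ is an isomorphism by the lemma, and, crucially for what follows, the $p+q$ vectors $\alpha_{1}(F),\dots,\alpha_{p}(F),\beta_{1}(F),\dots,\beta_{q}(F)$ are linearly independent and span $\spanop(F)$, a subspace of dimension exactly $p+q$; this is immediate from the construction in the proof of the lemma, where the $\alpha_{i}(F)$ form a basis of the connected component $W(F)$ of $\vec 0$ and the $z_{n}(F)$ (hence the $\beta_{j}(F)$) have trivial projection onto $W(F)$.

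Next I would build the $\gamma_{k}$ recursively. Fix once and for all the standard basis $e_{1},\dots,e_{d}$ of $\mathbb{R}^{d}$. Having already defined $\alpha_{i}(F)$, $\beta_{j}(F)$ and $\gamma_{1}(F),\dots,\gamma_{k}(F)$, which together form a linearly independent $(p+q+k)$-tuple, set $\gamma_{k+1}(F)=e_{m}$ for the least $m$ such that $e_{m}$ does not lie in the linear span of $\{\alpha_{i}(F)\}\cup\{\beta_{j}(F)\}\cup\{\gamma_{1}(F),\dots,\gamma_{k}(F)\}$. Such $m$ exists whenever $k<d-p-q$, because that span then has dimension $p+q+k<d$. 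The relation ``$e_{m}\in\spanop(S)$'' for a finite set $S$ of vectors depending on $F$ in a Borel way is a Borel condition: it holds iff the matrix whose columns are the vectors of $S$ has the same rank as the matrix obtained by adjoining the column $e_{m}$, and equality of ranks is expressed by vanishing and non-vanishing of finitely many minors, which are polynomials in the entries and hence Borel functions of $F$ (alternatively one may invoke item \eqref{item:span-is-borel} applied to the span of the finitely many already-chosen vectors). Thus each $F\mapsto\gamma_{k+1}(F)$ is Borel. After $d-p-q$ steps we obtain $\gamma_{1},\dots,\gamma_{d-p-q}$, and by construction $\{\alpha_{i}(F),\beta_{j}(F),\gamma_{k}(F)\}$ is a linearly independent family of $d=p+q+(d-p-q)$ vectors, hence a basis of $\mathbb{R}^{d}$, which is the second clause.

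I do not anticipate a real obstacle. The two points that need a line of care are (a) confirming that the vectors supplied by Lemma \ref{lem:effros-basis-selectors} are already linearly independent and span $\spanop(F)$ of the right dimension, which is read off from the lemma's proof, and (b) confirming that ``lies in the span of the chosen finite set'' is Borel, handled by the rank/minor observation above; everything else is a uniform finite-dimensional linear-algebra recursion.
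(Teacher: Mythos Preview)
Your proposal is correct and follows essentially the same approach as the paper: take $\alpha_i,\beta_j$ from Lemma~\ref{lem:effros-basis-selectors} to get item~(1), then complete the linearly independent family $\{\alpha_i(F),\beta_j(F)\}$ to a basis of $\mathbb{R}^d$ in a Borel way. The only cosmetic difference is the completion mechanism---the paper invokes Gram--Schmidt orthogonalization against the standard basis, while you greedily adjoin the least-indexed standard basis vector not already in the span; both are manifestly Borel and either one suffices.
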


\begin{proof}
  The first item has been proved in Lemma \ref{lem:effros-basis-selectors} above.  The second one
  is immediate by completing the linearly independent set \( \bigl\{\alpha_{i}(F), \beta_{j}(F)\bigr\}
  \) to a basis using, for example, Gramm-Schmidt orthogonalization relative to the standard basis.
\end{proof}

Let \( \mathbb{R}^{d} \acts X \) be a Borel flow on a standard Borel space \( X \), which we no
longer assume to be free.  One may consider the map \( \stab : X \to \sgr\bigl( \mathbb{R}^{d}
\bigr) \) that associates to a point \( x \in X \) its stabilizer.  This map
is known to be Borel.  Corollary \ref{cor:basis-selectors} therefore allows one to partition 
\[ X = \mkern-10mu\bigsqcup_{p,q \atop p+q \le d}\mkern-10mu X_{p,q} \]
into finitely many invariant Borel pieces, \( X_{p,q} = \{x \in X : \stab(x) \in \sgr_{p,q}(\mathbb{R}^{d})\}
\).  Moreover, and on each set \( X_{p,q} \), an orbit \( x + \mathbb{R}^{d} \) can be identified with the
quotient \( \mathbb{R}^{d}/ \stab(x) \), which is isomorphic to
\( \mathbb{R}^{r} \times \mathbb{T}^{q} \), \( r = d - p - q \).  In view of Corollary \ref{cor:basis-selectors}
we have a \emph{free} action of \( \mathbb{R}^{r} \times \mathbb{T}^{q} \) on \( X_{p,q} \), which
is defined for all \( x \in X_{p,q} \) by
\begin{displaymath}
  \begin{aligned}
    x + (s_{1}, \ldots, s_{r}, t_{1}, \ldots, t_{1}) = x + &s_{1}\gamma_{1}\bigl(\stab(x)\bigr) +
    \cdots + s_{r}\gamma_{r}\bigl(\stab(x)\bigr) + \\
    &t_{1} \beta_{1}\bigl(\stab(x)\bigr) + \cdots +
    t_{q}\beta_{q}\bigl(\stab(x)\bigr).
  \end{aligned}
\end{displaymath}
The action \( \mathbb{R}^{r} \times \mathbb{T}^{q} \acts X_{p,q} \) has the same orbits as the
action of \( \mathbb{R}^{d} \acts X_{p,q} \).  One may therefore transfer the topology (and the
smooth structure) from \( \mathbb{R}^{r} \times \mathbb{T}^{q} \) to any orbit of
\( x \in X_{p,q} \), and define a time-change equivalence between (not necessarily free) flows
\( \mathbb{R}^{d} \acts X \) and \( \mathbb{R}^{d} \acts Y \) as an orbit equivalence
\( \phi : X \to Y \) that is a homeomorphism\footnote{The topology is independent of the choice of
  bases \( \alpha_{i} \), \( \beta_{j} \), and \( \gamma_{k} \) in Corollary
  \ref{cor:basis-selectors}, but an orientation does depend on it.} on each
orbit in the sense above.

\begin{theorem}
  \label{thm:RxT-flow-tce-to-product}
  Let \( \mathbb{R}^{p} \times \mathbb{T}^{q} \acts X \) be a free Borel flow.  There exists a
  cocompressible invariant subset \( Z \subseteq X \) such that the flow restricted onto \( Z \) is
  isomorphic to a product flow.
\end{theorem}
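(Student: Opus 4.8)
The plan is to strip off the \( \mathbb{T}^{q} \)-directions, reducing the problem to a free \( \mathbb{R}^{p} \)-flow carrying a \( \mathbb{T}^{q} \)-valued cocycle, and then to absorb that cocycle into the orbit topologies — which time change equivalence permits — while invoking the \( \mathbb{R}^{p} \)-flow results proved earlier. (Here I read ``isomorphic to a product flow'' in the sense of time change equivalence, as the label of the theorem suggests; under literal Borel conjugacy the statement would be false, e.g.\ for a skew product of the horocycle flow with \( \mathbb{T} \).)

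\emph{Reducing the torus.} Since \( \mathbb{T}^{q} \) is compact and acts freely, the equivalence relation induced by the subgroup \( \{\vec{0}\}\times\mathbb{T}^{q} \) is smooth and admits a Borel transversal \( S\subseteq X \); by freeness \( S\times\mathbb{T}^{q}\to X \), \( (s,\vec{t})\mapsto s+\vec{t} \), is a Borel bijection, and we identify \( X \) with \( S\times\mathbb{T}^{q} \). Then the \( \mathbb{T}^{q} \)-part of the flow is translation in the second variable, while the \( \mathbb{R}^{p} \)-part is a skew product
\[ (s,\vec{t})+\vec{r} = \bigl(\sigma(s,\vec{r}),\ \vec{t}+\tau(s,\vec{r})\bigr), \]
\( \sigma \) being a free Borel \( \mathbb{R}^{p} \)-flow on \( S \) (freeness is inherited, as \( \mathbb{R}^{p} \) and \( \mathbb{T}^{q} \) commute and act freely) and \( \tau:S\times\mathbb{R}^{p}\to\mathbb{T}^{q} \) a Borel cocycle over \( \sigma \). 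Replacing \( S \) by \( \{s+g(s):s\in S\} \) for Borel \( g:S\to\mathbb{T}^{q} \) changes \( \tau \) by the coboundary \( (s,\vec{r})\mapsto g(\sigma(s,\vec{r}))-g(s) \), so \( \tau \) may be normalised within its cohomology class.

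\emph{Regularising the cocycle.} Using a continuous model of the ambient flow (Becker--Kechris), I would choose the normalisation so that \( \vec{r}\mapsto\tau(s,\vec{r}) \) is continuous on \( \mathbb{R}^{p} \) for every \( s \). I expect this to be the main obstacle: done uniformly and in a Borel way it should cost a compressible invariant set, with the argument patching local choices coherently via the ergodic decomposition (Lemma \ref{lem:ergodic-decomposition}) and a Rokhlin-type exhaustion (Lemma \ref{lem:uniform-rokhlin}). On the invariant Borel part of \( S \) where \( \sigma \) is smooth one can normalise \( \tau \) to \( 0 \), so \( X \) is even a strict product there.

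\emph{Tensoring with \( \mathbb{T}^{q} \).} On the remaining, nowhere smooth, part fix a free nowhere smooth \( \mathbb{R}^{p} \)-flow \( \mathbb{R}^{p}\acts W \): for \( p\ge 2 \), Theorem \ref{thm:time-change-equivalence-by-compressible} supplies cocompressible invariant subsets and a time change equivalence \( \theta \) between \( \sigma \) and the flow on \( W \); for \( p=1 \) this is the Miller--Rosendal theorem \cite{MR2578608}, and \( p=0 \) is trivial. Put \( \psi(s,\vec{t})=(\theta(s),\vec{t}) \), a Borel orbit equivalence onto the product flow \( \mathbb{R}^{p}\times\mathbb{T}^{q}\acts W\times\mathbb{T}^{q} \). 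The map \( \xi \) attached to \( \psi \) is, in suitable orbit coordinates, the composition of the homeomorphism of \( \mathbb{R}^{p} \) provided by \( \theta \) with the shear \( (\vec{a},\vec{b})\mapsto\bigl(\vec{a},\ \vec{b}+\tau(s,\vec{a})\bigr) \) of \( \mathbb{R}^{p}\times\mathbb{T}^{q} \), and the latter is a homeomorphism exactly because of the continuity arranged above; hence \( \psi \) is a time change equivalence onto a product flow. Taking \( Z \) to be its domain together with the smooth part (already a product) completes the proof, \( X\setminus Z \) being compressible since the discarded pieces are.
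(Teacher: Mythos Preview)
Your reduction is exactly the paper's: the goal is a Borel transversal \( S \) for the \( \mathbb{T}^{q} \)-action meeting each full orbit in a \emph{continuous} (in fact smooth) section --- equivalently, a coboundary normalisation of \( \tau \) making \( \vec{r}\mapsto\tau(s,\vec{r}) \) continuous. Where you write ``I expect this to be the main obstacle'' and gesture at Rokhlin and ergodic decomposition, the paper supplies precisely that construction, and it is essentially the whole proof. One applies the analog of Lemma~\ref{lem:uniform-rokhlin} with boxes \( R_{n}=[-l_{n},l_{n}]^{p}\times\mathbb{T}^{q} \) (this is where the compressible set is discarded); inside each \( c+R_{0} \) one takes \( S \) to be the graph of a smooth \( f_{0}:[-l_{0},l_{0}]^{p}\to\mathbb{T}^{q} \) chosen constant near the boundary; inside each \( c+R_{1} \) one extends the finitely many \( f_{0} \)-patches it contains to a single smooth graph over \( [-l_{1},l_{1}]^{p} \); and so on, passing to a limit as with spirals of cross sections in Section~\ref{sec:rational-grids}. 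No ergodic decomposition is used, and Becker--Kechris is a red herring --- a continuous model of the action says nothing about how an \emph{arbitrary} Borel transversal sits inside orbits.

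Your final step, invoking Theorem~\ref{thm:time-change-equivalence-by-compressible} (or Miller--Rosendal for \( p=1 \)) on the base, is superfluous. Once \( \tau(s,\cdot) \) is continuous, the identity on \( S\times\mathbb{T}^{q} \) is already a time change equivalence from the skew product to the direct product \( \sigma\times(\text{translation}) \) on the \emph{same} space: its \( \xi \) is exactly the shear \( (\vec{r},\vec{u})\mapsto(\vec{r},\vec{u}+\tau(s,\vec{r})) \), with no \( \theta \)-component needed. The theorem asks only for \emph{some} product flow, and the paper takes this one --- the induced \( \mathbb{R}^{p} \)-flow \( \flow_{0} \) on \( S \) times translation on \( \mathbb{T}^{q} \). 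Bringing in the earlier classification results costs you an extra compressible set for no gain.
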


\begin{proof}
  Let \( \mathcal{D} \subseteq X \) be a \( [-1,1]^{p}\times \mathbb{T}^{q} \)-lacunary cross
  section.  By an analog Lemma \ref{lem:uniform-rokhlin} (with only notational modifications to the
  proof), one may discard a compressible set (for convenience, we denote the remaining part by the
  same letter \( X \)) and find a sequence of cross sections \( \mathcal{C}_{n} \), and rationals
  \( l_{n} \) such that for sets \( \tilde{R}_{n} = [-l_{n},l_{n}]^{p} \),
  \( R_{n} = \tilde{R}_{n} \times \mathbb{T}^{q} \) one has
  \begin{enumerate}
  \item \( \mathcal{C}_{n} \) is \( R_{n} \)-lacunary;
  \item \( X = \bigcup_{n} \bigl(\mathcal{C}_{n} + R_{n}\bigr) \);
  \item \( \mathcal{C}_{n} + R_{n} \subseteq \mathcal{C}_{n+1} + R_{n+1}^{\la 1} \), where
    \[ R_{n+1}^{\la 1} = [-l_{n+1}+1,l_{n+1}-1]^{p}\times \mathbb{T}^{q}. \]
  \end{enumerate}

  Let \( (\epsilon_{n}) \) be a sequence of positive reals such that
  \( \sum_{n} \epsilon_{n} < 1/2 \).  We recursively construct sets \( S_{n} \subseteq X \).
  Consider a single region \( c + R_{0} \) and let the intersection
  \( (c + R_{0}) \cap \mathcal{D} \) consist of points \( \{z_{1}, \ldots, z_{m}\} \).

  Taking \( c \) to be the origin, one gets a coordinate system in \( c + R_{0} \).  Let
  \[ y_{i} = \proj_{\mathbb{R}^{p}}\rho(c, z_{i}) \textrm{ and } x_{i} =
    \proj_{\mathbb{T}^{q}}\rho(c,z_{i}), \]
  where \( \mathbb{T}^{q} = [-1,1)^{q} \).  Shifting \( c \) by a vector of norm at most
  \( \epsilon_{0} \), we may assume without loss of generality that \( x_{i} \in (-1,1)^{q} \).  Pick
  a \( C^{\infty} \)-function \( f_{0} : \tilde{R}_{0} \to (-1,1)^{q} \) such that
  \begin{enumerate}[a)]
  \item \( f_{0}(y_{i}) = x_{i} \);
  \item there is \( \delta > 0 \) such that \( f_{0} \) is constant on a \( \delta \)-collar of \(
    R_{0} \).  
  \end{enumerate}
  The same construction is performed over all regions \( c + R_{0} \), \( c \in \mathcal{C}_{0} \).
  We define \( S_{0} \) to consist of points
  \[ S_{0} = \{ c + \bigl(a, f_{0}(a)\bigr) : a \in R_{0}, c \in \mathcal{C}\}. \]
  In words, \( S_{0} \) is a surface within each of \( c + R_{0} \) that passes through points \(
  z_{i} \), it is given by a graph of a function which is constant near the boundary of its domain.

  To construct the set \( S_{1} \), consider a single \( c + R_{1} \) region,
  \( c \in \mathcal{C}_{1} \).  It contains a number of \( R_{0} \) regions, each containing a
  surface as prescribed by \( S_{0} \) (see Figure \ref{fig:construction-of-S1}).  Let
  \( T_{1}, \ldots, T_{m} \) be these surfaces.  If \( d_{1}, \ldots, d_{m}\in \mathcal{C}_{0} \)
  are such that \( d_{i} + R_{0} \subseteq c + R_{1} \), then for each \( i \), \( T_{i} \) is a
  graph of a smooth function
  \( f_{0,i} : \proj_{\mathbb{R}^{p}}\bigl(\rho(c,d_{i}) + R_{0} \bigr) \to \mathbb{T}^{q} \).

  \begin{figure}[htb]
    \centering
    \includegraphics[width=13cm]{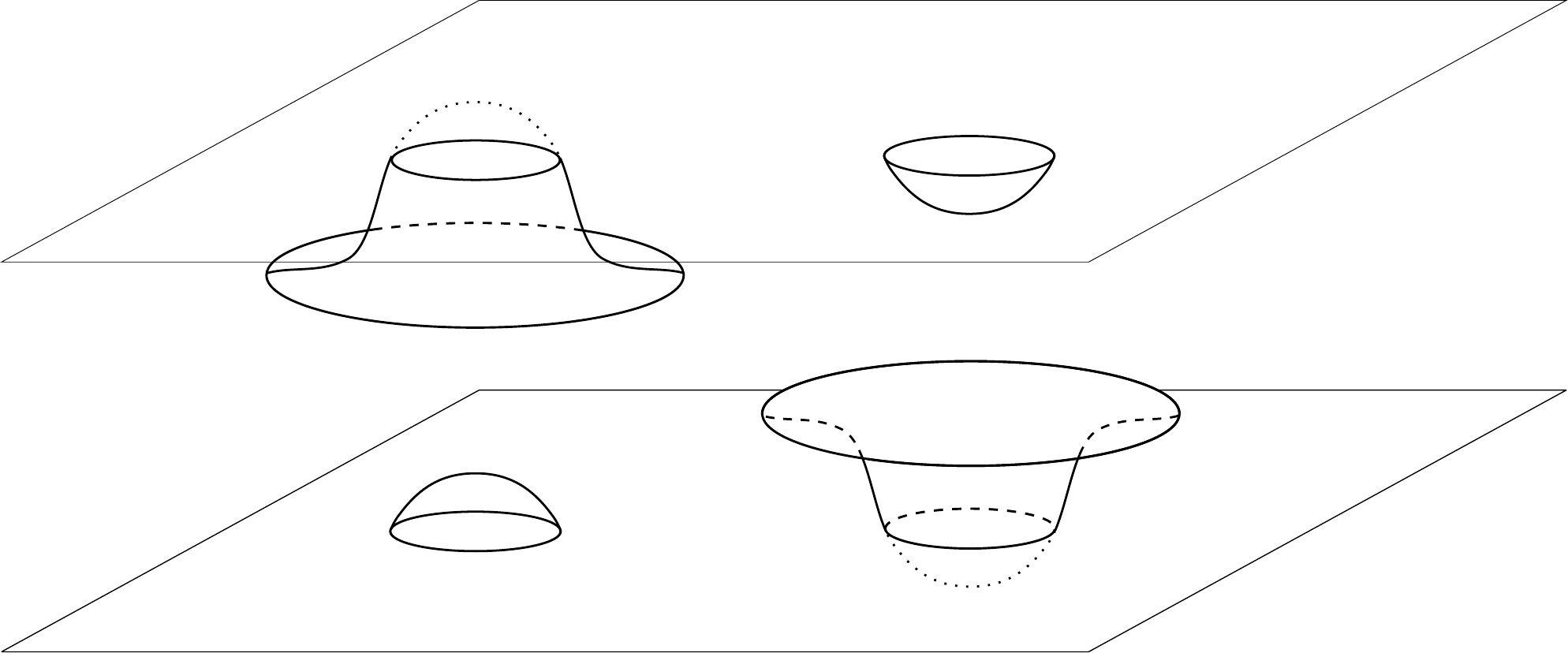}
    \caption{Construction of \( S_{1} \) for a \( \mathbb{R}^{2} \times \mathbb{T} \)-flow.}
    \label{fig:construction-of-S1}
  \end{figure}
  
  Recall that \( f_{0,i} \) are constant on a collar of \( \tilde{R}_{0} \), one may therefore shift
  \( c \) by at most \( \epsilon_{1} \) and ensure that \( f_{0,i}(x) \in (-1,1)^{q} \) for \( x \)
  in the collar of \( d_{i} + \tilde{R}_{0} \).  We therefore have \( C^{\infty} \)-functions
  \( f_{0,i} \) from disks inside \( \tilde{R_{1}} \) into \( (-1,1)^{q} \) (these functions are
  translations of the function \( f_{0} \) constructed above).  One may now extend all surfaces
  \( T_{1}, \ldots, T_{m} \) to a single smooth surface that projects injectively onto
  \( c + R_{1} \), i.e., is a graph of a smooth function \( f_{1} \).  The construction continues in
  the similar fashion.

  Set \( S \) to be the ``limit'' of \( S_{n} \) (in the same sense as in the limit of spirals of
  cross sections in Section \ref{sec:rational-grids}).  The resulting set \( S \) intersects every
  orbit of the flow in a smooth surface which is a graph of a function, i.e., \( S \) is a
  transversal for the action of \( \mathbb{T}^{q} \): if \( x + \vec{r} = y \) for some
  \( \vec{r} \in \mathbb{T}^{q} \), \( x ,y \in S \), then \( \vec{r} = \vec{0} \).  Let
  \( \zeta : X \to S \) be the selector map, such that
  \( x + \mathbb{T}^{q} = \zeta(x) + \mathbb{T}^{q} \) for all \( x \in X \).

  We now define a flow \( \flow_{0} : \mathbb{R}^{p} \acts S \) by setting
  \( \flow_{0}(x, \vec{r}) = \zeta(x + \vec{r}) \).  It is easy to see that \( \flow_{0} \) is free.
  Finally, let \( \flow' \) be the \( \mathbb{R}^{p} \times \mathbb{T}^{q} \)-flow on
  \( S \times \mathbb{T}^{q} \) given by the product of \( \flow_{0} \) and the translation on
  \( \mathbb{T}^{q} \).  The original flow \( \mathbb{R}^{p} \times \mathbb{T}^{q} \acts X  \) and
  \( \flow' \) are time change equivalent as witnessed by the natural identification between \( X \)
  and \( S \times \mathbb{T}^{q} \).
\end{proof}

\begin{corollary}
  \label{cor:time-change-of-periodic}
  Let \( \flow_{i} : \mathbb{R}^{d} \acts X_{i} \), \( i = 1, 2 \), be Borel flows.  For \( p, q \in \mathbb{N} \)
  such that \( p + q \le d \) let
  \[ Y_{i}(p,q) = \{x \in X_{i} : \stab(x) \in \sgr_{p,q}(\mathbb{R}^{d})\}. \]
  Suppose that for each pair \( (p, q) \) one of the following is true:
  \begin{itemize}
  \item The restriction of \( \flow_{i} \) onto \( Y_{i}(p,q) \) is smooth, and also flows \(
    \flow_{1}|_{Y_{1}(p,q)} \) and \( \flow_{2}|_{Y_{2}(p,q)} \) have the same number of orbits;
  \item Both \( \flow_{i}|_{Y_{i}(p,q)} \) are non smooth.
  \end{itemize}
  In this case flows \( \flow_{i} \) are time change equivalent up to a compressible perturbation.
\end{corollary}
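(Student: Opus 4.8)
The plan is to split $X_i=\bigsqcup_{p+q\le d}Y_i(p,q)$ into finitely many invariant Borel pieces and to treat each pair $(p,q)$ separately. The relation ``time change equivalent up to a compressible perturbation'' is transitive (as observed in the proof of Theorem~\ref{thm:time-change-equivalence-by-compressible}) and is closed under finite disjoint sums: if $Z_i^{(p,q)}\subseteq Y_i(p,q)$ are invariant with compressible complement and $\psi^{(p,q)}\colon\mathfrak{F}_1|_{Z_1^{(p,q)}}\to\mathfrak{F}_2|_{Z_2^{(p,q)}}$ are time change equivalences, then $\bigsqcup_{p,q}Z_i^{(p,q)}$ is cocompressible in $X_i$ (a finite disjoint union of compressible invariant sets is compressible) and $\bigsqcup_{p,q}\psi^{(p,q)}$ is a time change equivalence. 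So it suffices to show, for each $(p,q)$, that $\mathfrak{F}_1|_{Y_1(p,q)}$ and $\mathfrak{F}_2|_{Y_2(p,q)}$ are time change equivalent up to a compressible perturbation. On $Y_i(p,q)$ we pass, via Corollary~\ref{cor:basis-selectors}, to the associated free Borel $\mathbb{R}^{r}\times\mathbb{T}^{q}$-action with the same orbits, $r=d-p-q$; by construction, time change equivalence of $\mathfrak{F}_i|_{Y_i(p,q)}$ means time change equivalence of these free $\mathbb{R}^r\times\mathbb{T}^q$-flows.

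In the smooth case $\mathfrak{F}_i|_{Y_i(p,q)}$ admits a Borel transversal $T_i$, and freeness turns the Borel map $(t,g)\mapsto t+g$ into an equivariant Borel isomorphism $T_i\times(\mathbb{R}^r\times\mathbb{T}^q)\to Y_i(p,q)$. Having the same number of orbits gives a Borel bijection $T_1\to T_2$, which, acting as the identity on the group factor, extends to a flow isomorphism $Y_1(p,q)\to Y_2(p,q)$; this is in particular a time change equivalence, and no compressible perturbation is needed.

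In the non smooth case note first that $r\ge 1$: the case $r=q=0$ makes the orbit equivalence relation equality, hence smooth, while $r=0$, $q\ge 1$ gives a free $\mathbb{T}^q$-flow, whose orbit equivalence relation is smooth because $\mathbb{T}^q$ is compact. If $q=0$ we have two free non smooth $\mathbb{R}^r$-flows, which are time change equivalent outright by the Miller--Rosendal theorem \cite[Theorem~2.19]{MR2578608} if $r=1$ and time change equivalent up to a compressible perturbation by Theorem~\ref{thm:time-change-equivalence-by-compressible} if $r\ge 2$. If $q\ge 1$, apply Theorem~\ref{thm:RxT-flow-tce-to-product} to each $\mathfrak{F}_i|_{Y_i(p,q)}$: after discarding a compressible invariant set we may assume $\mathfrak{F}_i|_{Y_i(p,q)}$ is isomorphic to a product flow $\mathfrak{F}_0^{(i)}\times(\text{translation on }\mathbb{T}^q)$ on $S_i\times\mathbb{T}^q$ with $\mathfrak{F}_0^{(i)}$ a free $\mathbb{R}^r$-flow; such a product is smooth if and only if $\mathfrak{F}_0^{(i)}$ is smooth, and since a smooth free $\mathbb{R}^r$-flow ($r\ge 1$) is isomorphic to $T\times\mathbb{R}^r$ and therefore compressible, one can arrange, discarding more compressible pieces if necessary, that $\mathfrak{F}_0^{(i)}$ is non smooth. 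Apply the $q=0$ case to $\mathfrak{F}_0^{(1)}$ and $\mathfrak{F}_0^{(2)}$ to obtain cocompressible $W_i\subseteq S_i$ and a time change equivalence $\mathfrak{F}_0^{(1)}|_{W_1}\to\mathfrak{F}_0^{(2)}|_{W_2}$; its product with $\mathrm{id}_{\mathbb{T}^q}$ is a time change equivalence between the product flows restricted to $W_i\times\mathbb{T}^q$, and since $W_i\times\mathbb{T}^q$ remains cocompressible, transporting it back through the isomorphisms of Theorem~\ref{thm:RxT-flow-tce-to-product} gives the desired statement for $\mathfrak{F}_1|_{Y_1(p,q)}$ and $\mathfrak{F}_2|_{Y_2(p,q)}$.

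The routine verifications are that time change equivalence up to a compressible perturbation is preserved under finite disjoint sums, under products with the identity on $\mathbb{T}^q$, and under composition with flow isomorphisms, together with the elementary facts about compressibility used above. The step I expect to be the main obstacle is the reduction in the non smooth case with $q\ge 1$: one must ensure that in passing from $\mathfrak{F}_i|_{Y_i(p,q)}$ to its $\mathbb{T}^q$-reduction via Theorem~\ref{thm:RxT-flow-tce-to-product}, non-smoothness is not lost when compressible sets are discarded --- which works precisely because a smooth free $\mathbb{R}^r$-flow carries no invariant probability measure.
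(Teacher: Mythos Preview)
Your proof is correct and takes essentially the same approach as the paper --- the paper's own proof is a one-line citation of Theorem~\ref{thm:RxT-flow-tce-to-product}, Theorem~\ref{thm:time-change-equivalence-by-compressible}, and Corollary~\ref{cor:basis-selectors}, and you have filled in precisely the details that those citations leave implicit. Your explicit separation of the $r=1$ case (via Miller--Rosendal) from $r\ge 2$, the handling of the smooth pieces via transversals, and your attention to why non-smoothness survives the reduction of Theorem~\ref{thm:RxT-flow-tce-to-product} all go beyond what the paper spells out.
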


\begin{proof}
  The proof is immediate from Theorem \ref{thm:RxT-flow-tce-to-product}, Theorem
  \ref{thm:time-change-equivalence-by-compressible} and Corollary \ref{cor:basis-selectors}.
\end{proof}

\section{Orbit equivalences of \( \mathbb{T} \)-flows}
\label{sec:loe-vs-tce}

In this last section we show how Lebesgue orbit equivalence, defined as orbit equivalence that
preserves the Lebesgue measure between orbits, exhibits a completely different behavior than time
change equivalence.

Recall the following notion from \cite{2015arXiv150400958S}: two \emph{free} Borel flows
\( \mathbb{R} \acts X \) and \( \mathbb{R} \acts Y \) are said to be \emph{Lebesgue orbit
  equivalent} if there exists an orbit equivalence bijection \( \phi : X \to Y \) which preserves
the Lebesgue measure on every orbit.  Freeness is needed to transfer the Lebesgue measure from
\( \mathbb{R} \) to orbits of the flow.

In general, any Borel flow \( \mathbb{R} \acts X \) can be decomposed into a periodic and aperiodic
parts, i.e., there is a Borel partition \( X = X_{1} \sqcup X_{2} \) into invariant pieces such that
\( \mathbb{R} \acts X_{2} \) is free, while \( \mathbb{R} \acts X_{1} \) is periodic, i.e., for any
\( x \in X_{1} \) there is some \( \lambda \in \mathbb{R} \setminus \{0\} \) such that
\( x + \lambda = \lambda \) (this is a simple instance of item \eqref{item:dimension-is-Borel} in
Lemma \ref{lem:effros-basis-selectors}) We may therefore define a map
\( \per : X_{1} \to \mathbb{R}^{\ge 0} \) by
\[ \per(x) = \inf\{ \lambda \in \mathbb{R}^{>0} : x + \lambda = x\}. \]
The \emph{period} map \( \per \) is easily seen to be Borel.  The set of fixed points by the flow is
characterized by the equation \( \per(x) = 0 \).

For convenience, let us say that a flow \( \mathbb{R} \acts X \) is \emph{purely periodic} if any
\( x \in X \) is periodic and there are no fixed
points for the flow.  An orbit of a point \( x \) can therefore be naturally identified with an
interval \( \bigl[0, \per(x)\bigr) \) and endowed with a Lebesgue measure on this interval
(\emph{not} normalized).  We
obtain a Borel assignment of measures \( x \mapsto \mu_{x} \), which is invariant under the action
of the flow.  It is natural to extend the concept of Lebesgue orbit equivalence to purely periodic
flows by declaring two of them \( \mathbb{R} \acts X \), \( \mathbb{R} \acts Y \) to be
\emph{Lebesgue orbit equivalent} whenever there is a bijection \( \phi : X \to Y \) which preserves
the orbit equivalence relation and satisfies \( \phi_{*}\mu_{x} = \mu_{\phi(x)} \) for all
\( x \in X \), i.e., it preserves the Lebesgue measure within every orbit.

In the case of discrete actions \( \mathbb{Z} \acts X \), the above definition corresponds to the
requirement of preserving the counting measure within every periodic orbit.  This is automatically
satisfied by any bijection that preserves orbits.  Since there are only countably many possible
sizes of orbits, two periodic hyperfinite equivalence relations, \( \cber \) on \( X \) and \( \fber \) on
\( Y \), are isomorphic if and only for any \( n \in \mathbb{N} \) sets 
\[ \{x \in X : \bigl| [x]_{\cber} \bigr| = n \} \quad \textrm{and} \quad \{ y \in Y : \bigl| [y]_{\fber}
  \bigr| = n \}\]
have the same size.

The analog of the condition above is also obviously necessary for purely periodic flows to be
Lebesgue orbit equivalent: for any \( \lambda \in \mathbb{R}^{> 0} \) cardinalities of orbits of
period \( \lambda \) have to be the same in both flows.  The purpose of this section is to show that
contrary to the discrete case, for purely periodic Borel flows this condition is no longer
sufficient.

Let \( \mathbb{R} \acts X \) be a purely periodic Borel flow, let \( \cber \) denote its orbit
equivalence relation, and let \( Z \subseteq X \) be a Borel transversal for \( \cber \).  The pair
\( (Z, \per) \), where \( \per \) is the restriction of the period function onto \( Z \), completely
characterizes the flow.  Indeed, the flow can be recovered as a flow under the function
\( \per : Z \to \mathbb{R}^{>0} \) with the trivial base automorphism (see \cite[Chapter 7]{MR1725389}).  The converse is also true:
any pair \( (Z, \per) \), where \( Z \) is a standard Borel space and
\( \per : Z \to \mathbb{R}^{> 0}\) is a Borel function, gives rise to a purely periodic Borel
automorphism.  Since any Lebesgue orbit equivalence between purely periodic flows has to preserve
the period map, the problem of classifying purely periodic flows up to Lebesgue orbit equivalence
can therefore be reformulated as a problem of classifying all pairs \( (Z, f) \), where \( Z \) is a
standard Borel space and \( f : Z \to \mathbb{R}^{> 0} \) is a Borel map, up to isomorphism, i.e.,
up to existence of a Borel bijection \( \phi : Z_{1} \to Z_{2} \) such that
\( \phi\bigl(f_{1}(x)\bigr) = f_{2} \bigl( \phi(x)\bigr) \) for all \( x \in Z_{1} \).  

Our necessary condition for Lebesgue orbit equivalence transforms into the following: if \( (Z_{1},
f_{1}) \) and \( (Z_{2}, f_{2}) \) are isomorphic, then \( \bigl| f_{1}^{-1}(\lambda) \bigr|  =
\bigl| f_{2}^{-1}(\lambda) \bigr| \) for all \( \lambda \in \mathbb{R}^{>0} \).  

\begin{proposition}
  \label{prop:not-sufficient}
  There are two pairs \( (Z_{1}, f_{1}) \) and \( (Z_{2}, f_{2}) \), where \( Z_{i} \) are standard
  Borel spaces and \( f_{i} : Z_{i} \to \mathbb{R}^{>0} \) are Borel maps, such that \( \bigl|
  f_{1}^{-1}(\lambda) \bigr| = \bigl| f_{2}^{-1}(\lambda) \bigr| \) for all \( \lambda \in
  \mathbb{R}^{>0} \) and yet \( (Z_{1}, f_{1}) \) and \( (Z_{2}, f_{2}) \) are not isomorphic.
\end{proposition}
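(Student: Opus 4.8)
The plan is to recast everything as a problem about Borel ``bundles'' over \( \mathbb{R}^{>0} \). Replacing each \( Z_{i} \) by a Borel-isomorphic copy inside \( \mathbb{R} \) and passing to \( \Gamma_{i} = \{ (f_{i}(z), z) : z \in Z_{i} \} \subseteq \mathbb{R}^{>0} \times \mathbb{R} \) together with the projection \( \pi \) onto the first coordinate, one has \( (Z_{i}, f_{i}) \cong (\Gamma_{i}, \pi) \); so it suffices to produce two Borel sets \( \Gamma_{1}, \Gamma_{2} \subseteq \mathbb{R}^{>0} \times \mathbb{R} \) with \( |(\Gamma_{1})_{\lambda}| = |(\Gamma_{2})_{\lambda}| \) for every \( \lambda \) that are not interchanged by any Borel bijection commuting with \( \pi \).

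First I would record where the obstruction \emph{cannot} lie. If every fibre of \( \Gamma_{i} \) is countable, then by the Luzin--Novikov theorem \( \Gamma_{i} \) is a countable disjoint union of graphs of Borel partial functions over Borel domains; from this one reads off that \( \lambda \mapsto |(\Gamma_{i})_{\lambda}| \in \{0, 1, \dots, \aleph_{0}\} \) is a Borel function and, moreover, that \( \Gamma_{i} \) is \emph{determined} up to \( \pi \)-isomorphism by it (enumerate each fibre according to the index of the graph each of its points lies on, and glue over the Borel pieces \( \{|(\Gamma_{i})_{\lambda}| = k\} \)). Also, a Borel set with all sections countable has Borel projection. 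Hence a counterexample must have uncountable fibres, and since \( |(\Gamma_{1})_{\lambda}| = |(\Gamma_{2})_{\lambda}| \) the two sets must share the same ``uncountable locus'' \( A = \{ \lambda : (\Gamma_{i})_{\lambda} \text{ is uncountable} \} \); this locus is in general only \( \mathbf{\Sigma}^{1}_{1} \), and that is precisely where something can go wrong.

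So the plan is: fix a properly analytic (non-Borel) set \( A \subseteq \mathbb{R}^{>0} \), write \( A = \pi(C) \) with \( C \subseteq \mathbb{R}^{>0} \times \mathbb{N}^{\mathbb{N}} \) closed, and for \( \Gamma_{1} \) take a harmless ``fattening'' of \( C \) — replace \( \mathbb{N}^{\mathbb{N}} \) by \( \mathbb{N}^{\mathbb{N}} \times 2^{\mathbb{N}} \), Borel-embedded in \( \mathbb{R} \), so that every non-empty fibre becomes perfect while the projection is still \( A \). For \( \Gamma_{2} \) take a second Borel set with projection \( A \) and all non-empty fibres uncountable, but arranged so that its family of fibres is strictly more complex — say uniformly \( \mathbf{\Sigma}^{0}_{2} \) and genuinely not \( \sigma \)-compact — in a way that still spells out \( A \). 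Both pairs then have fibre-cardinality function \( \mathfrak{c} \) on \( A \) and \( 0 \) off \( A \). The hard part, and the main obstacle, is to show \( (\Gamma_{1}, \pi) \not\cong (\Gamma_{2}, \pi) \). Here the idea I would pursue is a rigidity-plus-boundedness argument: on one side, two Borel bundles whose non-empty fibres are uniformly ``nice'' (coded by a Borel family of perfect trees) with the same projection are \( \pi \)-isomorphic, via the canonical identification of perfect trees, so \( \Gamma_{1} \) is \( \pi \)-isomorphic to a canonical model over \( A \); on the other side, a \( \pi \)-isomorphism of \( \Gamma_{2} \) onto that canonical model would furnish a Borel, fibrewise ``straightening'' of \( \Gamma_{2} \), and feeding this into the transfinite recursion ``peel off a Borel parametrized perfect kernel, look at the remainder'' would force the recursion to terminate at a countable stage on a \emph{Borel} set containing \( A \) — contradicting that \( A \) is properly analytic. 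Isolating the exact invariant and checking that it is preserved by \( \pi \)-isomorphisms is the delicate step; everything preceding it is routine descriptive set theory.
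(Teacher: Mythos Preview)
Your proposal overcomplicates the problem and leaves the decisive step unfinished. You have convinced yourself that the obstruction must live in the complexity of the ``uncountable locus'' \( A \), and you therefore insist on taking \( A \) properly analytic. But once \( A \) is non-Borel, \emph{neither} \( \Gamma_{i} \) can admit a Borel uniformization (the graph of a Borel section would project injectively onto \( A \), forcing \( A \) to be Borel), so you have thrown away the most natural invariant and are forced to hunt for something finer. Your ``rigidity-plus-boundedness'' sketch is not a proof: you never say what the invariant is, why it distinguishes your \( \Gamma_{1} \) from your \( \Gamma_{2} \), or why it is preserved by \( \pi \)-isomorphisms. A fibrewise Borel bijection need not respect anything topological like \( \sigma \)-compactness of sections, so the heuristic ``\( \Gamma_{2} \) has messier fibres'' does not obviously survive.

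The paper's argument is essentially one line, and it works precisely because the base is taken to be \emph{Borel}. Let \( Z_{2} = [1,2] \times \mathbb{R} \) with \( f_{2} \) the first projection, and let \( Z_{1} \subseteq [1,2] \times \mathbb{R} \) be any Borel set all of whose vertical sections have cardinality \( \mathfrak{c} \) yet which admits no Borel uniformization (such sets are classical; see Kechris, \emph{Classical Descriptive Set Theory}, Exercises 18.9 and 18.17), again with \( f_{1} \) the first projection. Both have fibre-cardinality \( \mathfrak{c} \) on \( [1,2] \) and \( 0 \) off it. The invariant is simply whether the equivalence relation \( x \sim y \iff f_{i}(x) = f_{i}(y) \) admits a Borel transversal: \( Z_{2} \) obviously does (e.g.\ \( \lambda \mapsto (\lambda,0) \)), \( Z_{1} \) does not by construction, and any \( \pi \)-isomorphism transports transversals to transversals. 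Your detour through a non-Borel \( A \) was not only unnecessary but actively obscured this.
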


\begin{proof}
  Let \( Z_{1} \subseteq [1,2] \times \mathbb{R} \) be a Borel set which admits no Borel
  uniformization (see \cite[Section 18]{MR1321597}) and satisfies for all \( x \in [1,2] \):
  \[ \bigl| \{ y \in \mathbb{R} : (x,y) \in Z_{1} \} \bigr| = \mathfrak{c}. \]
  Existence of such sets is well-known (see, for example, Exercise 18.9 and Exercise 18.17 in
  \cite{MR1321597}).  Let \( Z_{2} \subseteq [1,2] \times \mathbb{R} \) be any Borel set which does
  admit a Borel uniformization and satisfies for all \( x \in [1,2] \):
  \[ \bigl| \{ y \in \mathbb{R} : (x,y) \in Z_{2} \} \bigr| = \mathfrak{c}. \]
  For instance, one may take \( Z_{2} = [1,2] \times \mathbb{R} \).

  Let \( f_{i} : Z_{i} \to [1,2] \) be  projections onto the first coordinate.  Pairs
  \( (Z_{1}, f_{1}) \) and \( (Z_{2}, f_{2}) \) are not isomorphic, because by construction the
  relation on \( Z_{1} \) given by \( x \sim y \) whenever \( f_{1}(x) = f_{1}(y) \) does not admit a
  Borel transversal, while the analogous relation on \( Z_{2} \) admits one.
\end{proof}

In contrast, time change equivalence relation on purely periodic flows is, of course, trivial.

\begin{proposition}
  \label{prop:time-change-for-purely-periodic}
  Let \( \mathbb{R} \acts X_{1} \) and \( \mathbb{R} \acts X_{2} \) be purely periodic flows.  If
  cardinalities of orbit spaces of these flows are the same, then they are time change equivalent.
\end{proposition}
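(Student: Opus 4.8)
The plan is to reduce each flow to its ``base data'' and then exploit the fact that, unlike a Lebesgue orbit equivalence, a time change equivalence between purely periodic flows is free to rescale each orbit-circle arbitrarily. As in the discussion preceding this proposition, the orbit equivalence relation of a purely periodic \( \mathbb{R} \)-flow is smooth, so I would begin by fixing Borel transversals \( Z_{i} \subseteq X_{i} \), the associated Borel selectors \( \zeta_{i} : X_{i} \to Z_{i} \) (with \( \zeta_{i}(x) \) the unique point of \( Z_{i} \) on the orbit of \( x \)), and the period functions \( \per_{i} : Z_{i} \to \mathbb{R}^{>0} \). The orbit through a point \( z \in Z_{i} \) is then naturally the circle \( \mathbb{R}/\per_{i}(z)\mathbb{Z} \) via \( t \mapsto z + t \), and this is exactly the topology transferred from \( \mathbb{T} \) that defines time change equivalence for this (non-free) flow. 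I would also record the coordinate maps \( \tau_{i} : X_{i} \to \mathbb{R}^{\ge 0} \) determined by \( \zeta_{i}(x) + \tau_{i}(x) = x \) and \( 0 \le \tau_{i}(x) < \per_{i}(\zeta_{i}(x)) \); each \( \tau_{i} \) is Borel because its graph is a Borel subset of \( X_{i} \times \mathbb{R} \) with singleton vertical sections, so the Luzin--Novikov uniformization theorem \cite{MR1321597} applies.

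Next I would produce the ``global'' part of the map. Since the orbit spaces of the two flows have the same cardinality, the standard Borel spaces \( Z_{1} \) and \( Z_{2} \) have the same cardinality, hence are Borel isomorphic by the Borel isomorphism theorem \cite{MR1321597}; fix a Borel bijection \( \theta : Z_{1} \to Z_{2} \). Crucially, \( \theta \) is not required to interact with \( \per_{1} \) and \( \per_{2} \) in any way, and this is harmless precisely because a time change equivalence between periodic flows need only be a homeomorphism of circles, never an isometry.

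I would then glue these together by rescaling each orbit linearly: define \( \psi : X_{1} \to X_{2} \) by
\[ \psi(x) = \theta\bigl(\zeta_{1}(x)\bigr) + \tau_{1}(x)\cdot\frac{\per_{2}\bigl(\theta(\zeta_{1}(x))\bigr)}{\per_{1}\bigl(\zeta_{1}(x)\bigr)}. \]
For \( z \in Z_{1} \) this sends the circle \( \mathbb{R}/\per_{1}(z)\mathbb{Z} \) onto the circle \( \mathbb{R}/\per_{2}(\theta(z))\mathbb{Z} \) by the quotient of the linear map \( t \mapsto t\cdot\per_{2}(\theta(z))/\per_{1}(z) \), which carries \( \per_{1}(z)\mathbb{Z} \) onto \( \per_{2}(\theta(z))\mathbb{Z} \) and therefore descends to an isomorphism of circles that is linear, in particular an orientation preserving \( C^{\infty} \)-diffeomorphism. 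Thus \( \psi \) restricts to a homeomorphism on every orbit; it maps orbits bijectively onto orbits because \( \theta \) is a bijection between transversals; and injectivity and surjectivity of \( \psi \) follow immediately by reading off \( \zeta_{2}(\psi(x)) = \theta(\zeta_{1}(x)) \) and \( \tau_{2}(\psi(x)) = \tau_{1}(x)\cdot\per_{2}(\theta(\zeta_{1}(x)))/\per_{1}(\zeta_{1}(x)) \). Finally \( \psi \) is Borel by construction, hence a Borel bijection of standard Borel spaces, hence has Borel inverse (again by Luzin--Novikov), so \( \psi \) is the desired time change equivalence.

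I do not expect a genuine obstacle: as the surrounding text indicates, the statement is essentially trivial, and the only non-routine inputs are the Borel isomorphism theorem (to obtain \( \theta \)) and two applications of Luzin--Novikov (for the Borelness of \( \tau_{i} \) and of \( \psi^{-1} \)); everything else is bookkeeping.
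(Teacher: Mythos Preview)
Your proposal is correct and follows essentially the same route as the paper: pick Borel transversals, a Borel bijection between them (via the Borel isomorphism theorem), and extend by rescaling each orbit linearly. Your formula \( \psi(x) = \theta(\zeta_{1}(x)) + \tau_{1}(x)\cdot\per_{2}(\theta(\zeta_{1}(x)))/\per_{1}(\zeta_{1}(x)) \) is exactly the paper's intended map (the paper's displayed ratio contains a typo), and your added justifications for Borelness are fine, though the Borel inverse of a Borel bijection between standard Borel spaces is more commonly attributed to Luzin--Souslin than to Luzin--Novikov.
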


\begin{proof}
  Let \( D_{i} \subseteq X_{i} \) be transversals for the orbit equivalence relations.  By
  assumption \( |D_{1}| = |D_{2}| \), so let \( \phi : D_{1} \to D_{2} \) be any Borel bijection.
  Let \( \xi_{i} : X_{i} \to D_{i} \) be Borel selectors, and let
  \( \rho_{i} : X_{i} \to \mathbb{R} \) be such that \( \xi_{i}(x) + \rho_{i}(x) = x \),
  \( \rho_{i}(x) \in \bigl[0,\per(x)\bigr) \) for all \( x \in X_{i} \).  Extend \( \phi \) to a
  time change equivalence by setting
  \[ \phi(x) = \phi\bigl(\xi_{1}(x)\bigr) + \rho_{1}(x) \times \frac{\per(\xi_{1}(x))}{\per(x)}. \qedhere\]
\end{proof}

\bibliographystyle{amsalpha}
\bibliography{refs.bib}

\providecommand{\bysame}{\leavevmode\hbox to3em{\hrulefill}\thinspace}
\providecommand{\MR}{\relax\ifhmode\unskip\space\fi MR }
\providecommand{\MRhref}[2]{%
  \href{http://www.ams.org/mathscinet-getitem?mr=#1}{#2}
}
\providecommand{\href}[2]{#2}
\begin{thebibliography}{ORW82}

\bibitem[BK96]{MR1425877}
Howard Becker and Alexander~S. Kechris, \emph{The descriptive set theory of
  {P}olish group actions}, London Mathematical Society Lecture Note Series,
  vol. 232, Cambridge University Press, Cambridge, 1996. \MR{1425877}

\bibitem[DJK94]{MR1149121}
R.~Dougherty, S.~Jackson, and A.~S. Kechris, \emph{The structure of hyperfinite
  {B}orel equivalence relations}, Trans. Amer. Math. Soc. \textbf{341} (1994),
  no.~1, 193--225. \MR{1149121 (94c:03066)}

\bibitem[Fel91]{MR1113569}
Jacob Feldman, \emph{Changing orbit equivalences of {${\bf R}^d$} actions,
  {$d\geq 2$}, to be {${\mathcal{C}}^\infty$} on orbits}, Internat. J. Math.
  \textbf{2} (1991), no.~4, 409--427. \MR{1113569 (93e:58108a)}

\bibitem[JKL02]{MR1900547}
Stephen Jackson, Alexander~S. Kechris, and Alain Louveau, \emph{Countable
  {B}orel equivalence relations}, J. Math. Log. \textbf{2} (2002), no.~1,
  1--80. \MR{1900547 (2003f:03066)}

\bibitem[Kec92]{MR1176624}
Alexander~S. Kechris, \emph{Countable sections for locally compact group
  actions}, Ergodic Theory Dynam. Systems \textbf{12} (1992), no.~2, 283--295.
  \MR{1176624 (94b:22003)}

\bibitem[Kec95]{MR1321597}
\bysame, \emph{Classical descriptive set theory}, Graduate Texts in
  Mathematics, vol. 156, Springer-Verlag, New York, 1995. \MR{1321597
  (96e:03057)}

\bibitem[Mil97]{MR1487640}
John~W. Milnor, \emph{Topology from the differentiable viewpoint}, Princeton
  Landmarks in Mathematics, Princeton University Press, Princeton, NJ, 1997,
  Based on notes by David W. Weaver, Revised reprint of the 1965 original.
  \MR{1487640}

\bibitem[MR10]{MR2578608}
Benjamin~D. Miller and Christian Rosendal, \emph{Descriptive {K}akutani
  equivalence}, J. Eur. Math. Soc. (JEMS) \textbf{12} (2010), no.~1, 179--219.
  \MR{2578608 (2011f:03066)}

\bibitem[Nad90]{MR1081705}
M.~G. Nadkarni, \emph{On the existence of a finite invariant measure}, Proc.
  Indian Acad. Sci. Math. Sci. \textbf{100} (1990), no.~3, 203--220.
  \MR{1081705}

\bibitem[Nad98]{MR1725389}
\bysame, \emph{Basic ergodic theory}, second ed., Birkh\"auser Advanced Texts:
  Basler Lehrb\"ucher. [Birkh\"auser Advanced Texts: Basel Textbooks],
  Birkh\"auser Verlag, Basel, 1998. \MR{1725389}

\bibitem[ORW82]{MR653094}
Donald~S. Ornstein, Daniel~J. Rudolph, and Benjamin Weiss, \emph{Equivalence of
  measure preserving transformations}, Mem. Amer. Math. Soc. \textbf{37}
  (1982), no.~262, xii+116. \MR{653094}

\bibitem[Rud79]{MR536948}
Daniel Rudolph, \emph{Smooth orbit equivalence of ergodic {${\bf R}^{d}$}
  actions, {$d\geq 2$}}, Trans. Amer. Math. Soc. \textbf{253} (1979), 291--302.
  \MR{536948 (80g:28017)}

\bibitem[{Slu}15]{2015arXiv150400958S}
Konstantin {Slutsky}, \emph{{Lebesgue Orbit Equivalence of Multidimensional
  Borel Flows}}, to appear in Ergodic Theory and Dynamical Systems (2015).

\end{thebibliography}

\end{document}